\newcommand{\brint}[1]{{\llbracket #1 \rrbracket}} 
\newtheorem{theorem}{Theorem}
\newtheorem{definition}{Definition}
\newtheorem{lemma}{Lemma}
\newtheorem{proposition}{Proposition}
\def\R{{\mathbb R}} 
\def\N{{\mathbb N}} 
\def\Ns{{\mathbb N}^*} 
\def\P{{\bf P}} 
\def\Ph{{\widehat {\bf P}}} 
\def\E{{\bf E}} 
\def\Eh{{\widehat{\bf E}}} 
\def\iid{{i.i.d.\ }} 
\def\root{{\rho}} 
\def\Tb{{\mathbf T}} 
\def\Fb{{\mathbf F}} 
\def\X{{\cal{X}}} 
\def\T{{\mathbb T}} 
\def\Tbh{{\widehat{\mathbf T}}} 
\def\F{{\mathbb F}} 
\def\L{{\cal{L}}} 
\def\B{{\cal{B}}}
\def\Zi{L} 
\def\Ni{B} 
\def\tauh{{\hat{\tau}}} 
\def\ty{{\bf e}} 
\newcommand{\cl}[1]{{\lceil #1 \rceil}} 
\newcommand{\fl}[1]{{\lfloor #1 \rfloor}} 
\newcommand{\1}[1]{{{\bf 1}_{\{ {#1} \}}}} 
\newcommand{\sto}[2][\longrightarrow]{{\substack{ \\ {#1} \\ {#2}}}} 
\newcommand{\parent}[1]{{\buildrel \leftarrow \over {#1}}} 
\newcommand{\up}[1]{\textsuperscript{#1}} 
\def\eps{\varepsilon} 
\def\d{\, \mathrm{d}} 
\renewcommand{\cal}[1]{{\mathcal{#1}}} 
\newcommand{\scr}[1]{{\mathscr{#1}}} 
\newcommand{\st}{{\; :\;}} 
\newcommand{\commentblock}[1]{} 
\def\bzeta{{\boldsymbol \zeta}} 
\def\hzeta{{\widehat{\zeta}}} 
\title{\textbf{Scaling limit of multitype Galton--Watson trees with infinitely many types}}
\author{Loïc de Raphélis \thanks{Sorbonne Universités, UPMC Univ Paris 6, UMR 7599, Laboratoire de Probabilités et Modèles Aléatoires, 4 place
Jussieu, F-75005 Paris. {\tt Loic.de\_raphelis\_soissan$@$upmc.fr}}}
\date{\today}
\begin{document}

\maketitle

\begin{abstract}
   We introduce a certain class of 2-type Galton--Watson trees with edge lengths. We prove that, after an adequate rescaling, the weighted height function of a forest of such trees converges in law to the reflected Brownian motion. We then use this to deduce under mild conditions an invariance principle for multitype Galton--Watson trees with a countable number of types, thus extending a result of G.~Miermont in~\cite{miermont} on multitype Galton--Watson trees with finitely many types. 
\end{abstract}

\bigskip

\noindent{\bf MSC2010 subject classification }60J80, 60F17

\bigskip

\noindent{\bf Key Words~: } Galton--Watson tree, Multitype Galton--Watson tree, infinitely many types, edge lengths, scaling limit
\clearpage

\section{Introduction}

In a seminal work~\cite{aldous3}, D.~Aldous established the scaling limit of critical Galton--Watson trees with finite variance conditioned to be large as the continuum random tree~\cite{aldous1,aldous2}. One way to see that is to consider the {\it height functions} of the trees and to show that the latter converge towards the Brownian excursion. Later on, T.~Duquesne and J.-F.~Le Gall~\cite{duquesne-le-gall} showed the convergence in law of the height function of the critical Galton--Watson forest with possibly infinite variance towards a Lévy Process. 

This result on Galton--Watson forests with finite variance was extended by G.~Miermont in~\cite{miermont} to critical multitype Galton--Watson trees with finitely many types, under a second moment condition. Our aim is to get this result when the set of types is countable, under mild conditions. 

To this end, we will introduce first a certain kind of 2-type Galton--Watson trees with edge lengths, one of the types being sterile, that we will call {\it leafed Galton--Watson trees with edge lengths}, as the vertices of sterile type can be seen as extra leaves. We will prove that under certain hypotheses, the height function of a forest made up of such trees, taking into account the edge lengths, satisfies the same limit theorem than simple Galton--Watson forests with finite variance in~\cite{duquesne-le-gall}. This result will then be used to prove the convergence of multitype Galton--Watson forests : our method will consist in linking the height function of any given multitype Galton--Watson tree to that of a certain leafed Galton--Watson tree with edge lengths, using a tree-reduction method inspired by Section 2.3 of~\cite{miermont}. 

Several results on Galton--Watson trees with edge lengths have already been obtained. In~\cite{durrett-kesten-waymire} R.~Durrett, H.~Kesten and E.~Waymire determined among others  the asymptotic distribution of the maximal weighted height of a Galton--Watson tree conditioned on total progeny, when edge lengths are \iid Then, M.~Ossiander, E.~Waymire and Q.~Zhang proved in~\cite{ossiander-waymire-zhang} the convergence in law of the weighted height function of critical Galton--Watson trees conditioned on total progeny to the Brownian excursion, still in the case of \iid edge lengths.

Moreover, leafed Galton--Watson trees with edge lengths will find another application in an upcoming paper~\cite{aidekon-de-raphelis} where we will show how the study of a random walk on a Galton--Watson tree can be reduced to that of the height process of a leafed Galton--Watson forest with edge lengths. \\

\subsection{Leafed Galton--Watson trees with edge lengths}\label{s:introleafed}

We consider a random tree the vertices of which may be of type $0$ or $1$, and the edges of which have random lengths. Types $0$ and $1$ are such that a vertex may have a progeny only if its type is $1$. More precisely, our process will consist in a triplet $(\T,e,\ell)$ where for any $u$ in the tree $\T$, $e(u)$ is the type of $u$ and $\ell(u)$ is a non-negative number standing for the length of the edge joining $u$ with its parent. Let $\zeta$ be a probability measure on $\bigcup_{n≥ 0}(\{0;1\}\times \R_+)^n$ (with the convention that $(\{0;1\}\times \R_+)^0$ is the empty sequence) ; we call $\zeta$ the \textit{offspring distribution}. Notice that realisations of $\zeta$ are ordered, as we will only consider \textit{planar trees}.  We construct $(\T,e,\ell)$ by induction on generations as follows~: 
\begin{itemize}
\item {\bf Initialisation} \\
Generation $0$ of $\T$ is only made up of the root, denoted by $\root$, such that $e(\root)=1$ and $\ell(\root)=0$. 
\item {\bf Induction} \\
Let $n≥0$, and suppose that the tree has been built up to generation $n$. If generation $n$ is empty, then generation $n+1$ is empty. Otherwise, each vertex $u$ of generation $n$ such that $e(u)=1$ gives progeny according to $\zeta$, independently of other vertices, thus forming generation $n+1$. Vertices $u$ of generation $n$ such that $e(u)=0$ give no progeny. 
\end{itemize}
We call $(\T,e,\ell)$ a {\it leafed Galton--Watson tree with edge lengths}. We denote its law by $\P$, and by $\E$ the associated expectation. Notice that the subset of vertices of type $1$ has the law of a Galton--Watson tree : we denote it by $\T^1$, and we let $\zeta^1$ be its reproduction law (which includes the information on $\ell$). The tree $\T$ can therefore be seen as the tree $\T^1$ to which leaves (the vertices of type $0$) were artificially added (hence the word {\it leafed}). Likewise, we can define a \textit{leafed Galton--Watson forest with edge lengths} $(\F,e,\ell)$ as a sequence of \iid leafed Galton--Watson trees with edge lengths~; we denote by $\F^1$ the subset of vertices of type $1$ of $\F$.  \\
\bigskip

We will code planar trees using Neveu's notation~\cite{neveu}. Let $\cal{U}:=\bigsqcup_{n≥1}{(\N^*)}^{n}\cup\{ \root \}$ be the infinite Ulam-Harris tree. This tree is the set of all possible vertices. For $u,v\in\cal{U}$, we let $u.v$ be the concatenation of the sequences $u$ and $v$ (with $u.\root=\root.u=u$). This coding can be extended to forests : if $\F$ is a forest made up of trees $\T_1,\T_2,\ldots$, and if $u\in\T_i$, then we will code it by $(i).u$ in $\F$. With this notation, the roots of $\F$ are denoted by $(1),(2),\ldots$ (so $\root\notin\F)$. 

For any vertices $u$, $v$ in the tree $\T$, we let
\begin{itemize}
\item $|u|$ be the generation of $u$ (the root $\root$ being at generation $0$), 
\item $u_0,u_1,\ldots,u_{|u|}$ be the ancestors of $u$ at generation $0,1,2,\ldots,|u|$, 
\item $\Omega(u)$ be the set of its brothers (that is vertices $v\neq u$ in $\T$ having the same parent), 
\item $\nu(u)$ be its number of children in $\T$, and $\nu$ be a generic random variable with same law than $\nu(\root)$,
\item $\nu^1(u)$ be its number of children of type 1 in $\T$ (that is its number of children in $\T^1$), and $\nu^1$ be a generic random variable with same law than $\nu^1(\root)$,
\item $\parent{u}$ be its parent, 
\item $u \vdash v$ if $u$ is a strict ancestor of $v$, that is if there exists $w\in\cal{U}\setminus \{\root\}$ such that $v=u.w$,
\item $u \prec v$ if $u$ is lexicographically strictly smaller than $v$,
\item $u(0)=\root, u(1), u(2), \ldots$ be the vertices of $\T$ ordered lexicographically (if $\T$ is finite), 
\item $u^1(0)=\root, u^1(1), u^1(2), \ldots$ be the vertices of $\T$ of type $1$ ordered lexicographically (that is they are the vertices of $\T^1$ ordered lexicographically) (if $\T^1$ is finite). 
\end{itemize}
\noindent Notice that this notation can be naturally extended to $\F$, with the convention that roots $(1),(2),\ldots$ in $\F$ are at generation $0$. \\

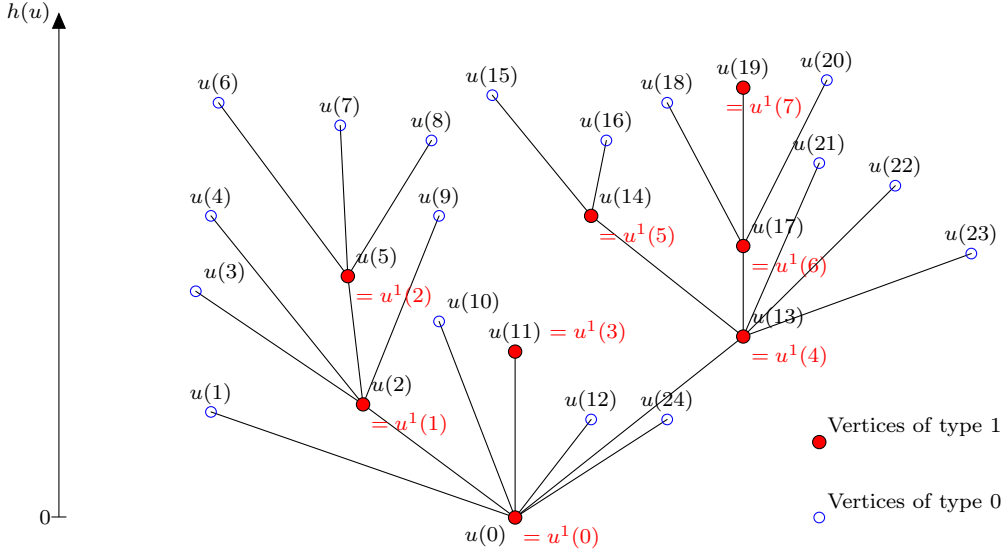
\begin{figure}[H]
\begin{tikzpicture}[line cap=round,line join=round,>=triangle 45,x=1.0cm,y=1.0cm]
\clip(-0.1,-0.5) rectangle (13.5,7);
\begin{scriptsize}
\draw (7,0) node[below left] {$u(0)$} node[below right, color=red] {$=u^1(0)$};
\draw (7,0)-- (3,1.4) node[above]{$u(1)$};
\draw (7,0)-- (5,1.5) node[above right]{$u(2)$} node[below right,color=red]{$=u^1(1)$};
\draw (7,0)-- (6,2.6) node[above right]{$u(10)$};
\draw (7,0)-- (7,2.2) node[above]{$u(11)$} node[above right,color=red]{$\phantom{1)}=u^1(3)$};
\draw (7,0)-- (8,1.3) node[above]{$u(12)$};
\draw (7,0)-- (9,1.3) node[above]{$u(24)$};
\draw (7,0)-- (10,2.4) node[above right]{$u(13)$} node[below right,color=red]{$=u^1(4)$};
\draw (5,1.5)-- (3,4) node[above]{$u(4)$};
\draw (5,1.5)-- (6,4) node[above]{$u(9)$};
\draw (5,1.5)-- (4.8,3.2)node[above right]{$u(5)$} node[below right,color=red]{$=u^1(2)$};
\draw (5,1.5)-- (2.8,3) node[above right]{$u(3)$};
\draw (10,2.4)-- (8,4) node[above right]{$u(14)$} node[below right,color=red]{$=u^1(5)$};
\draw (10,2.4)-- (10,3.6) node[above right]{$u(17)$} node[below right,color=red]{$=u^1(6)$};
\draw (10,2.4)-- (11,4.7) node[above]{$u(21)$};
\draw (10,2.4)-- (12,4.4) node[above]{$u(22)$};
\draw (10,2.4)-- (13,3.5) node[above]{$u(23)$};
\draw (4.8,3.2)-- (3.1,5.5) node[above]{$u(6)$};
\draw (4.8,3.2)-- (4.7,5.2) node[above]{$u(7)$};
\draw (4.8,3.2)-- (5.9,5) node[above]{$u(8)$};
\draw (10,3.6)-- (9,5.5) node[above]{$u(18)$};
\draw (10,3.6)-- (10,5.7) node[above]{$u(19)$} node[below,color=red]{$\phantom{aaa}=u^1(7)$};
\draw (10,3.6)-- (11.1,5.8) node[above]{$u(20)$};
\draw (8,4)-- (6.7,5.6) node[above]{$u(15)$};
\draw (8,4)-- (8.2,5) node[above]{$u(16)$};
\draw[|->] (1,0)-- (1,6.7);
\draw (1,0) node[left] {$0$};
\draw (1,6.7) node[left] {$h(u)$};
\draw (11,1.2) node[anchor= west] {Vertices of type 1};
\draw (11,0.2) node[anchor= west] {Vertices of type 0};
\draw [fill=red] (7,0) circle (2.5pt);
\draw [color=blue] (3,1.4) circle (2.0pt);
\draw [fill=red] (5,1.5) circle (2.5pt);
\draw [color=blue] (6,2.6) circle (2.0pt);
\draw [fill=red] (7,2.2) circle (2.5pt);
\draw [color=blue] (8,1.3) circle (2.0pt);
\draw [color=blue] (9,1.3) circle (2.0pt);
\draw [fill=red] (10,2.4) circle (2.5pt);
\draw [color=blue] (3,4) circle (2.0pt);
\draw [color=blue] (6,4) circle (2.0pt);
\draw [fill=red] (4.8,3.2) circle (2.5pt);
\draw [color=blue] (2.8,3) circle (2.0pt);
\draw [fill=red] (8,4) circle (2.5pt);
\draw [fill=red] (10,3.6) circle (2.5pt);
\draw [color=blue] (11,4.7) circle (2.0pt);
\draw [color=blue] (12,4.4) circle (2.0pt);
\draw [color=blue] (13,3.5) circle (2.0pt);
\draw [color=blue] (3.1,5.5) circle (2.0pt);
\draw [color=blue] (4.7,5.2) circle (2.0pt);
\draw [color=blue] (5.9,5) circle (2.0pt);
\draw [color=blue] (9,5.5) circle (2.0pt);
\draw [fill=red] (10,5.7) circle (2.5pt);
\draw [color=blue] (11.1,5.8) circle (2.0pt);
\draw [color=blue] (6.7,5.6) circle (2.0pt);
\draw [color=blue] (8.2,5) circle (2.0pt);
\draw [fill=red] (11,1) circle (2.5pt);
\draw [color=blue] (11,0) circle (2.0pt);
\end{scriptsize}
\end{tikzpicture}
\caption{An example of a realisation of a leafed Galton--Watson tree with edge lengths. }
\label{f:edge}
\end{figure}
\medskip

\noindent We make the following hypotheses~: 
\begin{itemize}
\item[$(\bf H_1)$ :] $\E\Big[  \sum_{|u|=1} 1 \Big]=\E\Big[ \nu \Big]=:m<\infty$,
\item[$(\bf H_c)$ :] $\E\Big[  \sum_{|u|=1} \1{e(u)=1} \Big]=\E\Big[ \nu^1 \Big]=1$,
\item[$(\bf H_c^2)$ :] ${\bf Var}\Big(  \sum_{|u|=1} \1{e(u)=1} \Big)={\bf Var}\Big( \nu^1 \Big)=:\sigma^2\in(0;\infty)$ (we take $\sigma>0$), 
\item[$(\bf H_0^2)$ :] $y^2\P\Big(  \max_{|u|=1,e(u)=0} \ell(u)>y \Big) \sto{y\to\infty} 0$,
\item [$(\bf H_1^2)$ :] $y^2\E\Big[\sum_{|u|=1, e(u)=1}\1{\ell(u)>y} \Big]\sto{y\to\infty} 0$,
\end{itemize}
\noindent and we denote by $\bf(H)$ their union. The second and third hypotheses ensure that $\T^1$ is a non-trivial critical Galton--Watson tree with finite variance. Notice that under hypotheses $\bf (H_1)$ and $\bf (H_c)$, $\T$ is finite, thus making consistent the numbering $u(0),u(1),\ldots$ previously introduced in the notation.

We let
\begin{equation*}
\mu:=\E\Big[\sum_{|u|=1,e(u)=1} \ell(u) \Big]
\end{equation*}
be the mean of the sum of lengths of edges issued from vertices of type 1, which is finite thanks to $\bf (H_1^2)$. For each vertex $u\in\F$, we define its height $h(u)$ as~: 
\begin{equation*}
h(u):=\sum_{k=1}^{|u|} \ell(u_k).
\end{equation*}
We denote by $H^1$ the height process of $\F^1$, and we define $H^{\ell}$ the weighted depth-first exploration process of $\F$ as follows~:
\begin{equation}\label{eq:defH}
\forall n\in\N,\;H^1(n):=|u^1(n)|\;\textrm{ and }\;H^{\ell}(n):=h(u(n)). 
\end{equation}
Notice that one of the differences between $H^1$ and $H^{\ell}$ is that in $H^1$, $\ell$ has no influence, whereas in $H^{\ell}$ it does. As explained in \cite{duquesne-le-gall}, these processes characterise $\F^1$ and $(\F,\ell)$ (information on $\ell$ is easily recovered from $H^\ell$). Let us state our first main result~(i), together with corollary results~(ii) and~(iii)~: 
\begin{theorem}{\label{th:bitype}}
Let $(\F,e,h)$ be a leafed Galton--Watson forest with edge lengths, with offspring distribution $\zeta$ satisfying hypothesis $\bf (H)$. 
\begin{itemize}
\item[\rm (i)] The following convergence in law holds for the Skorokhod topology on the space $\mathbb{D}(\R_+,\R)$ of c\`adl\`ag functions~: 
\begin{equation*}
\left(\frac{H^{\ell}(\fl{ns})}{\sqrt{n}},\frac{H^1(\fl{ns})}{\sqrt{n}}\right)_{s≥ 0} \substack { \Longrightarrow \\ n\to\infty} \left(\frac{2\mu}{\sigma}|B_{m^{-1}s}|,\frac{2}{\sigma}|B_{s}|\right)_{s≥ 0},
\end{equation*}
where $B$ is a standard Brownian motion. 
\item[\rm (ii)] For all $n\in\N$, let $\Gamma_n$ be the index of the tree to which $u(n)$ belongs. Then the following convergence in law holds jointly with that of~{\rm (i)}~: 
\begin{equation*}
\left( \frac{\Gamma_{\fl{ns}}}{\sqrt{n}} \right)_{s≥0} \sto[\Longrightarrow]{n\to\infty} \left( \sigma L_{m^{-1}s}^0 \right)_{s≥0}, 
\end{equation*}
where $(L_s^0)_{s≥0}$ is the local time at level $0$ of $B$, the Brownian motion of {\rm (i)}, normalised as the occupation density of $B$ at $0$. 
\item[\rm (iii)] Let $h_{max}(\T)=\max_{u\in\T}h(u)$ be the weighted height of the tree $\T$. Then, 
\begin{equation*}
n\P\Big( h_{max}(\T)≥n \Big) \sto[\Longrightarrow]{n\to\infty} \frac{2\mu}{\sigma^2}. 
\end{equation*}
\end{itemize}
\end{theorem}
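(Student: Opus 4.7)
My plan is to derive (i), (ii) and (iii) from the classical Aldous--Duquesne--Le Gall invariance principle applied to $\F^1$, combined with careful estimates on the edge lengths. Under $(\bf H_c)$ and $(\bf H_c^2)$, $\F^1$ is a critical Galton--Watson forest with offspring variance $\sigma^2\in(0,\infty)$, so the classical invariance principle immediately yields $H^1(\fl{ns})/\sqrt{n}\Longrightarrow (2/\sigma)|B_s|$ jointly with the convergence of its tree-index process to $\sigma L^0_s$. The remaining task is to transfer these results to the weighted full-forest quantities $H^{\ell}$, $\Gamma_n$ and $h_{max}(\T)$.

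For the reduction, observe that every strict ancestor of every vertex of $\F$ is of type $1$, so the lexicographic enumeration of $\F^1$ sits inside that of $\F$. Letting $N(n)$ denote the number of type-$1$ vertices visited up to time $n$ in the DFS of $\F$, the last type-$1$ vertex visited equals either $u(n)$ or its parent, and one has
\begin{equation*}
H^{\ell}(n)=h(u^1(N(n)))+O\bigl(\ell(u(n))\1{e(u(n))=0}\bigr), \qquad H^1(N(n))\in\{|u(n)|-1,|u(n)|\},
\end{equation*}
up to a harmless off-by-one in the indexing. A renewal argument based on the iid sequence of tree sizes $(|\T_i|,|\T^1_i|)$ and the a.s.\ relation $|\T|/|\T^1|\to m$ in the regime $|\T^1|\to\infty$ gives $N(\fl{ns})/n\to s/m$ uniformly on compact sets in probability, while hypothesis $(\bf H_0^2)$ ensures $\max_{j\le n}\ell(u(j))\1{e(u(j))=0}=o(\sqrt n)$ in probability.

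The heart of the argument is the weighted law of large numbers
\begin{equation*}
\sup_{k\le Cn}\bigl|h(u^1(k))-\mu\, H^1(k)\bigr|=o(\sqrt n)\qquad\text{in probability.}
\end{equation*}
I would prove it by introducing a martingale along the depth-first exploration of $\F^1$: using the iid structure of the offspring data $(\nu^1(u^1(j)),(\ell(u^1(j).i))_i)$ across $j$, the identity $\mu=\E[\sum_{i\le\nu^1}\ell(u^1.i)]$, and Doob's maximal inequality, the fluctuations have standard deviation $O(\sqrt{H^1(k)})=O(n^{1/4})$ by $(\bf H_1^2)$, which is indeed $o(\sqrt n)$. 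Combined with the reduction above, this yields the joint convergence of (i); the fact that the Brownian motions in the two coordinates coincide (up to the time change $s\mapsto s/m$) reflects the deterministic asymptotic $H^{\ell}\sim\mu\,H^1\circ N$.

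For part (ii), $\Gamma_n=\Gamma^1_{N(n)}$ where $\Gamma^1_k$ is the tree index in $\F^1$ of $u^1(k)$; joint convergence of $\Gamma^1/\sqrt n$ with $H^1/\sqrt n$ to $\sigma L^0$ is part of the classical invariance principle, and the deterministic time change $N(\fl{ns})/n\to s/m$ transfers it to $\Gamma$. For part (iii), I would combine the Kolmogorov tail estimate $n\P(\max_{v\in\T^1}|v|\ge n)\to 2/\sigma^2$ with the weighted LLN applied to the extremal vertex, controlling the contribution of single large edges via $(\bf H_0^2)$ and $(\bf H_1^2)$; this yields $n\P(h_{max}(\T)\ge n)\to 2\mu/\sigma^2$. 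The main obstacle is the weighted LLN above: since sibling edge lengths under $\zeta$ need not be independent, the martingale construction requires care in choosing the filtration generated by the depth-first exploration, and the control must be uniform on the scale $\sqrt n$ and hold jointly with $H^1$ at the Skorokhod level, so that the two coordinates in (i) truly converge to the same underlying Brownian motion.
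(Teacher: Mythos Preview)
Your overall architecture coincides with the paper's: invoke Duquesne--Le~Gall for $\F^1$, establish the time change $N(\lfloor ns\rfloor)/n\to s/m$ (the paper's Proposition~\ref{prop:horizontal}, with $\varphi$ playing the role of your $N$), prove the weighted law of large numbers $\sup_{k\le Cn}|h(u^1(k))-\mu H^1(k)|=o(\sqrt n)$ in probability (the paper's Proposition~\ref{prop:vertical}), and then read off (ii) and (iii) from the corresponding statements for $\F^1$ composed with the time change.

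The gap is in your execution of the weighted LLN. First, there is no martingale in the DFS index $k$ that controls $h(u^1(k))-\mu H^1(k)$: this quantity is a sum of $(\ell-\mu)$ along the \emph{current ancestral path}, which is pruned non-monotonically as the exploration backtracks, so Doob's inequality does not apply to it. Second, and more seriously, $(\bf H_1^2)$ only asserts $y^2\Ph(\ell(w_1)>y)\to0$ under the size-biased law; this does \emph{not} imply that $\ell(w_1)$ has finite second moment, so your claim of ``standard deviation $O(n^{1/4})$'' is unfounded. The paper instead passes, via the many-to-one lemma and the a~priori bounds of Lemma~\ref{lemma:estimatesGW}, to a union bound over $O(\sqrt n)$ trees and $O(\sqrt n)$ generations, which reduces the problem to bounding $\Ph\bigl(\bigl|\sum_{i\le k}(\ell(w_i)-\mu)\bigr|>\eps\sqrt n\bigr)$ on a single spine; since this must be $o(1/n)$ for the union bound to close, even a genuine second-moment estimate would be too weak, and the paper truncates the edge lengths at level $v_n=n^{3/8}$, shows that each ancestral path carries at most one untruncated edge, and then computes an eighth-moment bound after truncation. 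Your time-change step is also fragile: the tree sizes $|\T^1_i|$ have infinite mean (critical, finite variance), so a renewal argument on the sequence $(|\T_i|,|\T^1_i|)$ does not apply directly; the paper instead writes the inverse map as $\psi(n)=\sum_{k<n}\nu(u^1(k))-R(n)$, applies the weak LLN to the iid sum of the $\nu(u^1(k))$, and bounds the remainder $R(n)$ by $\sum_{u\vdash u^1(n)}\nu(u)$, which it controls through the Lukasiewicz path of $\F^1$ and the renewal function of its ladder heights.
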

\noindent Notice that (i) implies the convergence in law of $\F$ and $\F^1$ properly rescaled towards the same Brownian forest for the Gromov-Hausdorff topology (see Lemma 2.4 of~\cite{le-gall-rt}). The convergence of the marginal distribution of the second component in~(i) is Theorems~2.3.1 and~2.3.2 of~\cite{duquesne-le-gall}. The proof of Theorem~\ref{th:bitype} will be carried out in Section~\ref{s:proof1}. The method used to prove Theorem~\ref{th:bitype}~(i) is inspired by the proof of Theorem~1~(i) in~\cite{miermont} ; we will show that we can get $H^1$ close to $H^{\ell}$ for the Skorokhod topology on càdlàg functions, after an adequate scaling on two directions~: 
\begin{itemize}
\item[$\bullet$] on the amplitude of $H^1$, by a factor $\mu$~; we will show in Proposition~\ref{prop:vertical} that this is what it takes to get $H^1$ "vertically close" to $H^{\ell}$. 
\item[$\bullet$] "in time", by a factor $m^{-1}$, in order to "slow down" the depth-first exploration process on $\F^1$ for it to follow that on $\F$. Indeed, unlike $H^{\ell}$, $H^1$ does not visit vertices of type $0$, which makes it go faster. We will show in Proposition~\ref{prop:horizontal} that $m^{-1}$ is the right pace. 
\end{itemize}
As for the proof of Theorem~\ref{th:bitype}~(ii) and~(iii), it will follow that of Theorem~1~(ii) and Corollary~1 of \cite{miermont}, and it will be outlined in Section~\ref{s:conclusion1}. 
We emphasise that G.~Miermont's theorem in~\cite{miermont} cannot be simply applied here to get rid of the 2-type constraint. Indeed, in our case, it is not possible for a vertex of type 0 to have a descendant of type $1$ (in the setting of \cite{miermont}, the \textit{mean matrix} is not \textit{irreducible}).

\subsection{Multitype Galton--Watson trees}\label{s:intromulti}

Let us introduce a more classic process, the multitype Galton--Watson tree. Multitype Galton--Watson trees are trees to each vertex of which a type is associated. They are built in a way such that the progeny of each vertex is independent of that of other vertices, but such that the law of the progeny depends on the type of the vertex. Usually, they are studied in the case where the set of possible types is finite, mainly because of the importance of the \textit{mean matrix}, which has to be of finite-dimension if one wants to apply the Perron-Frobenius theorem to it, and thus characterise the behaviour of the tree. This case is well discussed by T.~E.~Harris in Chapter~II of \cite{harris}. However, one may consider more general sets of types. In what follows, we will suppose that the set of types is countable, and we will see that under good conditions on the mean matrix, it is possible to obtain the same tools than in the case where this set is finite (we will strongly rely on Chapter 6 of~\cite{seneta} for this). \\

Let $\X$ be a countable set (to which we will refer as the set of types), and ${\bzeta}=(\zeta_x)_{x\in\X}$ a family of laws taking their values in $\X^{(\N)}$ (the set of finite sequences of $\X$, including the empty sequence). A realisation of a random variable $Z$ of law $\zeta_x$, where $x \in \X$, gives the make-up of the progeny of a vertex of type $x$ in this way~:
\begin{itemize}
\item[$\bullet$] The length of $Z$ (denoted by $|Z|$) is the number of children of the vertex. 
\item[$\bullet$] The list of types forming $Z$ gives the list of the types of each of the $|Z|$ children~; if the latter is $(1,2,1)$ for example, then it means that the first child of the vertex is of type $1$, the second is of type $2$ and the third is of type $1$. 
\end{itemize}
\medskip

\noindent Let $x_0\in\X$. We consider in this part $(\Tb,\ty)$ a multitype Galton--Watson tree with offspring distribution $\bzeta$ (for any $u\in\Tb$, $\ty(u)\in\X$ denotes the type of $u$) and initial type $x_0$, that is $(\Tb,\ty)$ is built by induction on generations as follows~: 
\begin{itemize}
\item {\bf Initialisation} \\
Generation $0$ of $\Tb$ is only made up of the root, denoted by $\root$, with type $\ty(\root)=x_0$. 
\item {\bf Induction} \\
Let $n≥0$, and suppose that $\Tb$ has been built up to generation $n$. If generation $n$ is empty, then generation $n+1$ is empty. Otherwise, each vertex $u$ of generation $n$ gives progeny according to $\zeta_{\ty(u)}$, independently of other vertices, thus forming generation $n+1$. 
\end{itemize}

For $x_0\in\X$, we denote by $\P_{x_0}$ the probability law of $\Tb$ under which its root has type $x_0$, and $\E_{x_0}$ the associated expectation. We also let $\Fb$ be a multitype Galton--Watson forest with offspring distribution $\bzeta$, that is a collection of \iid  multitype Galton--Watson trees with offspring distribution $\bzeta$. For any $x_0\in\X$, we let $\P_{x_0}$ be the probability under which all the trees composing $\Fb$ have a root of type $x_0$, and $\E_{x_0}$ the associated expectation. 

We will use the general notation introduced in Subsection~\ref{s:introleafed} ; moreover we let for all $y\in\X$ and for all $u\in\Tb$, $\nu^{y}(u)$ be the number of children of type $y$ of $u$. \\

Let us introduce some conditions on our process. Let ${\bf M}=(m_{x,y})_{x,y\in\X}$ be the {\it mean matrix} of our process, where for all $x,y\in \X$,
\begin{equation*}
m_{x,y}:= \E_{x}\Big[\nu^{y}\Big], 
\end{equation*}
that is $m_{x,y}$ is the mean number of children of type $y$ of a vertex of type $x$. In Chapter~III of~\cite{harris}, the author studies multitype Galton--Watson processes with general sets of types under a condition of uniformity on the coefficients of $\bf M$. Our study will require weaker hypotheses on this matrix. First, we will suppose that all iterate coefficients of $\bf M$ are finite, that is 
\begin{equation*}
 \forall x,y\in\X,\: \forall k\in\Ns,\: m_{x,y}^{(k)}<\infty
\end{equation*} 
where $m_{x,y}^{(k)}$ is defined by induction as follows :
\begin{equation*}
m_{x,y}^{(1)}:=m_{x,y},\:m_{x,y}^{(k+1)}:=\sum_{z\in\X}m_{x,z}^{(k)} m_{z,y}\textrm{ for }k\geq 1.
\end{equation*} 
We also suppose that $\bf M$ is irreducible, that is such that for all $x,y\in\X$ there exists $k≥1$ such that $ m_{x,y}^{(k)}>0$. \\

In the case where $\X$ is finite, the Perron-Frobenius theorem can be applied to $\bf M$ : it ensures the existence of a maximal eigenvalue that is simple, to which are associated a right and a left eigenvector with positive entries (the only ones up to a constant to have positive entries). A necessary condition for Theorem~\ref{th:multitype} to hold in this case is this eigenvalue to be equal to $1$, a condition thus equivalent to the existence of left and right eigenvectors associated to $1$ with positive entries. 

In the general case where $\X$ is countable but not finite, the Perron-Frobenius theorem cannot be applied. However, denoting by $R$ the common convergence radius of ${\bf M}$ (also called the convergence parameter) defined in Chapter 6.1 in~\cite{seneta} (p.\ 200), according to Theorem 6.2 in~\cite{seneta}, if $R=1$, then there exist positive left and right eigenvectors of ${\bf M}$.  On the other hand, suppose $\bf (H_M)$ below, then Theorem 6.4 in~\cite{seneta} guarantees that $R=1$ (a condition that matches that of the finite case), and that $\bf M$ is $1$-positive (in the sense of Definition 6.2). It is therefore quite natural to make the following hypotheses on ${\bf M}$~: \\ 
\begin{equation*}
{\bf (H_M)}\begin{cases}\parbox{0.9\textwidth}{$\bf M$ is irreducible with finite iterate coefficients, and there exist $(a_x)_{x\in\X}\in{\R_+^*}^{\X}$ a left eigenvector of $\bf M$ associated to eigenvalue $1$ and $(b_x)_{x\in\X}\in{\R_+^*}^{\X}$ a right eigenvector of $\bf M$ associated to eigenvalue $1$, such that 
\begin{equation*}
\sum_{x\in\X} a_x<\infty,\; \sum_{x\in\X} a_x b_x<\infty, 
\end{equation*}
and renormalised so that $\sum_{x\in\X} a_x=1$ and $\sum_{x\in\X} a_x b_x=1$. }\end{cases}
\end{equation*}
Note that if these two vectors exist, then they are unique up to a constant (Theorem 6.4 of~\cite{seneta}). Notice also that the only extra condition compared to the case where $\X$ is finite is the finiteness of the two sums (a condition always satisfied in this case). \\

We also need hypotheses on second order moments. Let us set for all $x,y,z\in\X$,
\begin{equation*}
Q^x_{y,z} := \E_x\Big[ \Big(\sum_{|u|=1}\1{ \ty(u)=y }\Big)\Big(\sum_{|u|=1}\1{ \ty(u)=z }\Big) \Big]-\delta_{y,z}m_{x,z}, 
\end{equation*}
and let us make the following hypotheses, which also appears in \cite{miermont} : \\
\begin{equation*}
{\bf (H_Q)}\begin{cases}\parbox{\textwidth}{
\begin{itemize}
\item[$\bullet$] For all $x,y,z\in\X$, $Q_{y,z}^x<\infty$, 
\item[$\bullet$] $\eta:=\sqrt{\sum_{x,y,z\in\X}a_x b_y Q^x_{y,z} b_z}<\infty$. 
\end{itemize}}\end{cases}
\end{equation*}
This constant squared, $\eta^2$, will turn out to be the equivalent of the variance in the monotype case~; hypothesis $\bf (H_Q)$ is therefore necessary to our theorem. \\

Finally, we introduce a last hypothesis. Let $x\in\X$, we set
\begin{equation*}
\bf (H_R^{x})\begin{cases}
\parbox{\textwidth}{\begin{itemize} 
\item[$\bullet$] $y^2\P_x\Big( \max\{ |u| \st u\in\Tb,\: \ty(u_1),\ldots,\ty(u_{|u|})\neq x \}>y \Big)\sto{y\to\infty} 0$, 
\item[$\bullet$] $y^2\E_x\Big( \sum_{|u|>y}\1{ \ty(u_1),\ldots,\ty(u_{|u|-1})\neq x,\:\ty(u)=x} \Big)\sto{y\to\infty} 0$. 
\end{itemize}}
\end{cases}
\end{equation*}

In the appendix, we will give a stronger but simpler hypothesis implying $\bf (H_R^{x})$ for any $x$, which will always be satisfied in the case where $\X$ is finite. Let us now state our theorem. 

\begin{theorem}\label{th:multitype}
Let $x_0\in\X$, and let $(\Fb,\ty)$ be a multitype Galton--Watson forest such that hypotheses $\bf (H_M)$, $\bf (H_Q)$ and $\bf (H_R^{x_0})$ are satisfied. 
\begin{itemize}
\item[\rm (i)] Under $\P_{x_0}$, the following convergence in law holds for the Skorokhod topology on the space $\mathbb{D}(\R_+,\R)$ of c\`adl\`ag functions~: 
\begin{equation*}
\Big(\frac{|u(\fl{ns})|}{\sqrt{n}}\Big)_{s≥ 0} \substack { \Longrightarrow \\ n\to\infty} \Big(\frac{2}{\eta}|B_s|\Big)_{s≥ 0},
\end{equation*}
where $B$ is a standard Brownian motion. 
\item[\rm (ii)] For all $n\in\N$, let $\Gamma_n$ be the index of the tree to which $u(n)$ belongs. Then, under $\P_{x_0}$, the following convergence in law holds jointly with that of {\rm (i)}~: 
\begin{equation*}
\left( \frac{\Gamma_{\fl{ns}}}{\sqrt{n}} \right)_{s≥0} \sto{n\to\infty} \left( \frac{\sigma}{b_{x_0}} L_s^0 \right)_{s≥0}, 
\end{equation*}
where $(L_s^0)_{s≥0}$ is the local time of $B$, the Brownian motion of {\rm (i)}, normalised as the occupation density of $B$ at $0$.  
\item[\rm (iii)] Let $h_{max}(\Tb)=\max_{u\in\Tb}|u|$ be the height of the tree $\Tb$. Then, 
\begin{equation*}
n\P_{x_0}\Big( h_{max}(\T)≥n \Big) \sto{n\to\infty} \frac{2b_{x_0}}{\eta^2}. 
\end{equation*}
\end{itemize}
\end{theorem}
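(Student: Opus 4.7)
The plan is to reduce Theorem~\ref{th:multitype} to Theorem~\ref{th:bitype} via a tree-reduction procedure in the spirit of Section~2.3 of~\cite{miermont}. Given $(\Fb,\ty)$ under $\P_{x_0}$, I will construct a leafed Galton--Watson forest $(\F,e,\ell)$ as follows: for each vertex $u$ of $\Fb$ with $\ty(u)=x_0$, let $\Tb_u^*$ denote the subtree of $\Fb$ rooted at $u$ and pruned by stopping at the first $x_0$-descendants of $u$. Declare the children of $u$ in $\F$ to be the elements of $\Tb_u^*\setminus\{u\}$, ordered lexicographically as in $\Fb$, each equipped with type $e=1$ if it has type $x_0$ in $\Fb$ and type $e=0$ otherwise, and with edge length $\ell(v)=|v|-|u|$ measured in $\Fb$. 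By the branching Markov property, the subtrees $\Tb_u^*$ at distinct $x_0$-vertices are i.i.d., so $(\F,e,\ell)$ is a leafed Galton--Watson forest with some offspring distribution $\zeta$.

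Two observations are then central. First, a straightforward induction on the construction shows that the lexicographic depth-first explorations of $\F$ and of $\Fb$ visit the same sequence of vertices under the canonical bijection between their vertex sets, and that the weighted height of each $v\in\F$ equals its generation in $\Fb$. Hence $H^\ell(n)=|u(n)|$ for every $n$, and the tree-index $\Gamma_n$ and the maximal heights $h_{\max}$ are identical in both forests. Second, $\zeta$ satisfies $\bf(H)$ with constants that can be written explicitly from the multitype data. Using the change of measure by the positive martingale $M_n=\sum_{|u|=n}b_{\ty(u)}$, the spine becomes a Markov chain $(\xi_n)$ on $\X$ with transition $q(x,y)=m_{x,y}b_y/b_x$ and stationary distribution $\hat\pi(x)=a_xb_x$; Kac's lemma applied to its return time to $x_0$ yields
\begin{equation*}
\mu=\frac{1}{a_{x_0}b_{x_0}},\qquad m=\frac{1}{a_{x_0}},
\end{equation*}
both finite under $\bf(H_M)$. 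The criticality $\E[\nu^1]=1$ of $\bf(H_c)$ follows from $\sum_y m_{x_0,y}b_y=b_{x_0}$ together with the recursion $p_y=m_{y,x_0}+\sum_{z\neq x_0}m_{y,z}p_z$, whose solution is $p_y=b_y/b_{x_0}$ for $y\neq x_0$. The tail hypotheses $\bf(H_0^2)$ and $\bf(H_1^2)$ are exactly the two bullets of $\bf(H_R^{x_0})$, since by construction the type-$0$ (resp.\ type-$1$) children of $\root$ in $\F$ at distance $1$ correspond to non-$x_0$-vertices (resp.\ first $x_0$-descendants) of $\Tb_\root^*$, carrying as edge length their $\Fb$-generation. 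The variance $\sigma^2={\rm Var}(\nu^1)$ of $\bf(H_c^2)$ is obtained by decomposing over the first generation: $\E[(\nu^1)^2]$ splits into a diagonal contribution (controlled by the recursion for $q_y:=\E_y[(\nu^1)^2]$) and an off-diagonal term $\sum_{y,z\in\X}Q^{x_0}_{y,z}p_yp_z$ which is finite by $\bf(H_Q)$, and a many-to-one argument over pairs of first $x_0$-descendants ultimately gives
\begin{equation*}
\sigma^2=\frac{\eta^2}{a_{x_0}b_{x_0}^2}.
\end{equation*}

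Combining these yields the algebraic relations $2\mu/(\sigma\sqrt m)=2/\eta$, $\sigma/\sqrt{m}=\eta/b_{x_0}$, and $2\mu/\sigma^2=2b_{x_0}/\eta^2$. Applying Theorem~\ref{th:bitype}~(i)--(iii) to $(\F,e,\ell)$, together with the Brownian scaling $B'_s:=\sqrt m\,B_{s/m}$ under which $|B_{m^{-1}s}|=|B'_s|/\sqrt m$ and $L^0_{m^{-1}s}(B)=L^0_s(B')/\sqrt m$, and using the identifications $H^\ell(n)=|u(n)|$ and $h_{\max}(\Tb)=h_{\max}(\T)$ from the first observation, one obtains the three assertions of Theorem~\ref{th:multitype}.

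The main obstacle is the explicit expression $\sigma^2=\eta^2/(a_{x_0}b_{x_0}^2)$: its derivation requires unravelling a double sum over pairs of first $x_0$-descendants and interpreting it as $\eta^2=\sum_{x,y,z}a_xb_yQ^x_{y,z}b_z$ through a careful many-to-one argument over excursions of the spine chain $(\xi_n)$ up to $\tau_{x_0}$. The $\bf(H_Q)$ condition supplies absolute summability of the off-diagonal term, $\sum_x a_xb_x<\infty$ of $\bf(H_M)$ guarantees that the stationary measure $\hat\pi$ is a probability measure (hence Kac's lemma and finiteness of $m,\mu$), and these two summability conditions together allow the exchange of sums needed in the countable setting. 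Everything else is routine bookkeeping from Theorem~\ref{th:bitype}.
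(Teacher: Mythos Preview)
Your proposal is correct and follows essentially the same route as the paper: the same tree-reduction to a leafed Galton--Watson forest (Subsection~\ref{s:reduction} and Proposition~\ref{prop:egaliteprocess}), the same size-biased spine chain $(\phi_k)$ with transitions $p_{x,y}=m_{x,y}b_y/b_x$ and invariant law $\pi_x=a_xb_x$ (Proposition~\ref{prop:markovtype}), and the same identification of the constants $m=1/a_{x_0}$, $\mu=1/(a_{x_0}b_{x_0})$, $\sigma^2=\eta^2/(a_{x_0}b_{x_0}^2)$ (Propositions~\ref{prop:EN} and~\ref{prop:EZ}) before applying Theorem~\ref{th:bitype}. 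The only cosmetic difference is that you frame the computation of $m$ and $\mu$ via Kac's lemma and the criticality $\E[\nu^1]=1$ via the recursion $p_y=b_y/b_{x_0}$, whereas the paper obtains all three directly from the many-to-one formula (Lemma~\ref{lemma:many-to-one-multitype}); these are equivalent arguments.
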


\noindent This theorem was proved by G.~Miermont in \cite{miermont} in the case where $\X$ is finite, with optimal hypotheses under the assumption of irreducibility of the mean matrix $\bf M$~: Perron-Frobenius eigenvalue~$1$, and condition $\bf (H_Q)$. Our conditions may seem more restrictive~; however, in the finite case, they are implied by these optimal conditions. Indeed, in that case, hypothesis $\bf (H_R^{x})$ is always satisfied for any $x$ (see the appendix), and supposing the irreducibility of the mean matrix with Perron-Frobenius eigenvalue $1$ would imply $\bf (H_M)$, according to Theorem~6.2 of~\cite{seneta}. 

Theorem~\ref{th:multitype} therefore extends G.~Miermont's~one to the case where the set $\X$ is countable. Our proof will be different from his, although inspired by it. Indeed, in the latter, the author used an inductive method on the total number of types : given a multitype Galton--Watson tree with say $K$ types (where $K≥1)$, he was able to build a multitype Galton--Watson tree with $K-1$ types the height function of which was close (up to a re-normalisation) to that of the first tree. Then, step by step, he was able to show that the height function of the original tree was close to that of a monotype Galton--Watson tree, to which T. Duquesne and J.-F. Le Gall's~theorem \cite{duquesne-le-gall} could be applied. Obviously, this method cannot be used in our case. 

So we will introduce in the next subsection (Subsection~\ref{s:reduction}) a reduction (inspired by that of \cite{miermont}) which associates to any multitype Galton--Watson tree a leafed Galton--Watson tree with edge lengths. The whole point of this reduction is that it is such that both trees have the same height function (this is Proposition~\ref{prop:egaliteprocess}). Then, we will just have to show that if the multitype tree satisfies the hypotheses introduced earlier, then the tree obtained by this reduction satisfies hypothesis $\bf (H)$. This will be done using change of measure techniques in Section~\ref{s:proof2}, and thus according to Theorem~\ref{th:bitype} its height function will converge towards the reflected Brownian motion, and so will that of the multitype tree. \\

\subsection{Reduction of multitype trees to leafed trees with edge lengths}\label{s:reduction}

Let us introduce the method of construction of $(\T,e,\ell)$ a leafed Galton--Watson tree with edge lengths the associated depth-first exploration process of which is equal to the height process of a given multitype Galton--Watson tree $(\Tb,\ty)$. To this end, let us define the notion of \textit{optional line of a given type}. 
\begin{definition}
Let $y\in\X$ and $u\in\Tb$. 
\begin{itemize}
\item We denote by ${\B}_u^y$ the set of vertices descending from $u$ in $\Tb$ having no ancestor of type $y$ since $u$. Formally, 
\begin{equation*}
{\B}_u^y=\{ v\in\Tb \st u\vdash v\text{ and } \ty(w)\neq y \quad \forall w\in\Tb \text{ such that } u\vdash w \vdash v \}. 
\end{equation*}
\item We denote by ${\cal{L}}_u^y$ the set of vertices of type $y$ descending from $u$ in $\Tb$ and having no ancestor of type $y$ since $u$. Formally, 
\begin{equation*}
{\cal{L}}_u^y=\{ v\in\Tb \st u\vdash v,\: \ty(v)=y,\: \ty(w)\neq y \quad \forall w\in\Tb \text{ such that } u\vdash w \vdash v \}. 
\end{equation*} 
\end{itemize}
When $u=\root$, we will denote ${\cal{L}}_u^y$ by ${\cal{L}}^y$ and ${\B}_u^y$ by ${\B}^y$. 
\end{definition}

\noindent We say that ${\cal{L}}_u^y$ is the optional line of type $y$ stemming from $u$.  Somehow, ${\cal{L}}_u^y$ is the "top layer" of ${\B}_u^y$. The basic framework of optional lines was established in~\cite{jagers}. Of course, this notation can be extended to forests. Under $\P_{x_0}$, the construction of $(\T,e,\ell)$ consists in adding a vertex of type $1$ in $\T$ for each vertex of type $x_0$ in $\Tb$, and a vertex of type $0$ in $\T$ for each vertex of type $\neq x_0$ in $\Tb$. It is carried out inductively as follows~:  
\begin{itemize}
\item {\bf Initialisation} \\
Generation $0$ of $\T$ is made up of a root, $\root$, and we set $e(\root)=1$. Let us build generation~$1$. Take, in the lexicographical order, the vertices $v\in\Tb$ such that $v\in{\B}_{\root}^{x_0}$. Following their lexicographical ordering, to each $v\in\Tb$ among these vertices we associate a vertex $v^{x_0}$ to the first generation of $\T$, setting $e(v^{x_0})=1$ if $\ty(v)=x_0$ (that is if $v\in\cal{L}^{x_0}$), $e(v^{x_0})=0$ otherwise. Moreover, for each of these vertices $v^{x_0}\in\T$, we set its edge length as $\ell(v^{x_0})=|v|$. 
\item {\bf Induction} \\
Let $n≥1$, and suppose that generation $n$ of $\T$ has been built. If generation $n$ of $\T$ is empty then generation $n+1$ of $\T$ is empty. Otherwise, for each $u^{x_0}\in\T$ of the $n$\up{th} generation of $\T$ such that $e(u^{x_0})=1$, take in the lexicographical order the vertices $v\in\Tb$ such that $v\in{\B}_u^{x_0}$. Proceeding in the lexicographical order, to each $v\in\Tb$ of these vertices, we associate a vertex $v^{x_0}$ as a child of $u^{x_0}$ in $\T$, thus forming the progeny of $u^{x_0}$. We set $e(v^{x_0})=1$ if $\ty(u)=x_0$ (that is if $v\in\cal{L}_u^{x_0}$) and $e(v^y)=0$ otherwise. Then, for each of these vertices $v^{x_0}\in\T$, we set $\ell(v^{x_0})=|v|-|u|$. 
\end{itemize}  
\medskip

\begin{figure}[H]
\begin{tikzpicture}[line cap=round,line join=round,>=triangle 45,x=0.9cm,y=0.9cm]
\clip(4.2,-2.5) rectangle (22.8,7.5);
\draw[|->] (4.4,0)-- (4.4,6.7) node[above]{\hspace{1.1cm}\small $|u_{\Tb}(n)|$\phantom{))}};
\foreach \y in {1.0,2.0,3.0,4.0,5.0,6.0}
\draw[shift={(4.4,\y)},color=black] (2pt,0pt) -- (-2pt,0pt);
\draw[|->] (22.4,0)-- (22.4,6.7) node[above]{\small $H^\ell(n)$\phantom{))}};
\foreach \y in {1.0,2.0,3.0,4.0,5.0,6.0}
\draw[shift={(22.4,\y)},color=black] (2pt,0pt) -- (-2pt,0pt);
\draw (8,7.5) node[anchor=north west] {\large $\mathbf{T}$};
\draw (17,7.5) node[anchor=north west] {\large $\mathbb{T}$};
\draw [color=blue] (5,-1) ++(-2.0pt,0 pt) -- ++(2.0pt,2.0pt)--++(2.0pt,-2.0pt)--++(-2.0pt,-2.0pt)--++(-2.0pt,2.0pt);
\draw [fill=red] (5,-2) ++(-3.0pt,0 pt) -- ++(3.0pt,3.0pt)--++(3.0pt,-3.0pt)--++(-3.0pt,-3.0pt)--++(-3.0pt,3.0pt);
\draw (5,-0.5) node[anchor=north west] {Vertices of type $\neq x_0$};
\draw (5,-1.5) node[anchor=north west] {Vertices of type $x_0$};
\draw [dash pattern=on 3pt off 3pt,color=gray] (11,-0.5) -- (11,-1) -- (10.5,-1) -- (10.5,-0.5) -- (11,-0.5);
\draw (11,-0.5) node[anchor=north west] {The set $\B^{x_0}_{\root}$};
\draw [dotted, color=gray] (11,-1.5)-- (10.5,-1.5)-- (10.5,-2)-- (11,-2)-- (11,-1.5);
\draw (11,-1.5) node[anchor=north west] {First generation of $\T$};
\draw [color=blue] (17,-1) circle (2.0pt);
\draw [fill=red] (17,-2) circle (2.5pt);
\draw (17,-0.5) node[anchor=north west] {Vertices of type $0$};
\draw (17,-1.5) node[anchor=north west] {Vertices of type $1$};
\draw [->] (12.7,3.5)-- (13.7,3.5);

\draw[dash pattern=on 2pt off 2pt,color=gray] (5.5,0.5) -- (5.5,2.5) -- (6.5,2.5) -- (6.5,2.0) -- (7.5,2.0) -- (7.5,3.5) -- (8.5,3.5) -- (8.5,1.5) -- (9.5,1.5) -- (9.5,1.0) -- (10.5,1.0) -- (10.5,1.5) -- (11.5,1.5) -- (11.5,0.5) -- (8.5,0.5) -- (7.5,1.5) -- cycle;
\draw[dotted,color=gray] (15.0,0.5) -- (15.0,2.5) -- (16.0,2.5) -- (16.0,2.0) -- (17.0,2.0) -- (17.0,3.5) -- (18.0,3.5) -- (18.0,1.5) -- (19.0,1.5) -- (19.0,1.0) -- (20.0,1.0) -- (20.0,1.5) -- (21.0,1.5) -- (21.0,0.5) -- (18.0,0.5) -- (17.0,1.5) -- cycle;
\draw (9.0,0.0)-- (6.0,1.0);
\draw (9.0,0.0)-- (9.0,1.0);
\draw (9.0,0.0)-- (11.0,1.0);
\draw (6.0,1.0)-- (6.0,2.0);
\draw (6.0,1.0)-- (8.0,2.0);
\draw (11.0,1.0)-- (10.0,2.0);
\draw (11.0,1.0)-- (11.0,2.0);
\draw (6.0,2.0)-- (5.0,3.0);
\draw (6.0,2.0)-- (5.5,3.0);
\draw (6.0,2.0)-- (7.0,3.0);
\draw (8.0,2.0)-- (8.0,3.0);
\draw (10.0,2.0)-- (10.0,3.0);
\draw (10.0,2.0)-- (10.5,3.0);
\draw (11.0,2.0)-- (12.0,3.0);
\draw (10.0,3.0)-- (9.0,4.0);
\draw (10.0,3.0)-- (10.0,4.0);
\draw (12.0,3.0)-- (11.0,4.0);
\draw (12.0,3.0)-- (12.0,4.0);
\draw (5.5,3.0)-- (5.5,4.0);
\draw (5.5,3.0)-- (6.5,4.0);
\draw (10.0,4.0)-- (9.5,5.0);
\draw (10.0,4.0)-- (10.5,5.0);
\draw (5.5,4.0)-- (5.0,5.0);
\draw (5.0,5.0)-- (5.0,6.0);
\draw (5.0,5.0)-- (6.0,6.0);
\draw (9.5,5.0)-- (9.5,6.0);
\draw (15.5,2.0)-- (14.5,3.0);
\draw (15.5,2.0)-- (15.0,3.0);
\draw (15.5,2.0)-- (16.5,3.0);
\draw (19.5,3.0)-- (18.5,4.0);
\draw (19.5,3.0)-- (19.5,4.0);
\draw (21.5,3.0)-- (20.5,4.0);
\draw (21.5,3.0)-- (21.5,4.0);
\draw (19.5,4.0)-- (19.0,5.0);
\draw (19.5,4.0)-- (20.0,5.0);
\draw (15.0,4.0)-- (14.5,5.0);
\draw (18.5,0.0)-- (15.5,1.0);
\draw (18.5,0.0)-- (18.5,1.0);
\draw (18.5,0.0)-- (20.5,1.0);
\draw (18.5,0.0)-- (15.5,2.0);
\draw (18.5,0.0)-- (17.5,2.0);
\draw (18.5,0.0)-- (17.5,3.0);
\draw (20.5,1.0)-- (19.5,2.0);
\draw (20.5,1.0)-- (19.5,3.0);
\draw (20.5,1.0)-- (20.0,3.0);
\draw (20.5,1.0)-- (20.5,2.0);
\draw (20.5,1.0)-- (21.5,3.0);
\draw (19.5,4.0)-- (19.0,6.0);
\draw (15.0,4.0)-- (14.5,6.0);
\draw (15.0,4.0)-- (15.5,6.0);
\draw (15.5,2.0)-- (15.0,4.0);
\draw (15.5,2.0)-- (16.0,4.0);
\begin{scriptsize}
\draw [fill=red] (9.0,0.0) ++(-3.0pt,0 pt) -- ++(3.0pt,3.0pt)--++(3.0pt,-3.0pt)--++(-3.0pt,-3.0pt)--++(-3.0pt,3.0pt);
\draw [color=blue] (6.0,1.0) ++(-2.0pt,0 pt) -- ++(2.0pt,2.0pt)--++(2.0pt,-2.0pt)--++(-2.0pt,-2.0pt)--++(-2.0pt,2.0pt);
\draw [color=blue] (9.0,1.0) ++(-2.0pt,0 pt) -- ++(2.0pt,2.0pt)--++(2.0pt,-2.0pt)--++(-2.0pt,-2.0pt)--++(-2.0pt,2.0pt);
\draw [fill=red] (11.0,1.0) ++(-3.0pt,0 pt) -- ++(3.0pt,3.0pt)--++(3.0pt,-3.0pt)--++(-3.0pt,-3.0pt)--++(-3.0pt,3.0pt);
\draw [fill=red] (6.0,2.0) ++(-3.0pt,0 pt) -- ++(3.0pt,3.0pt)--++(3.0pt,-3.0pt)--++(-3.0pt,-3.0pt)--++(-3.0pt,3.0pt);
\draw [color=blue] (8.0,2.0) ++(-2.0pt,0 pt) -- ++(2.0pt,2.0pt)--++(2.0pt,-2.0pt)--++(-2.0pt,-2.0pt)--++(-2.0pt,2.0pt);
\draw [color=blue] (10.0,2.0) ++(-2.0pt,0 pt) -- ++(2.0pt,2.0pt)--++(2.0pt,-2.0pt)--++(-2.0pt,-2.0pt)--++(-2.0pt,2.0pt);
\draw [color=blue] (11.0,2.0) ++(-2.0pt,0 pt) -- ++(2.0pt,2.0pt)--++(2.0pt,-2.0pt)--++(-2.0pt,-2.0pt)--++(-2.0pt,2.0pt);
\draw [color=blue] (5.0,3.0) ++(-2.0pt,0 pt) -- ++(2.0pt,2.0pt)--++(2.0pt,-2.0pt)--++(-2.0pt,-2.0pt)--++(-2.0pt,2.0pt);
\draw [color=blue] (5.5,3.0) ++(-2.0pt,0 pt) -- ++(2.0pt,2.0pt)--++(2.0pt,-2.0pt)--++(-2.0pt,-2.0pt)--++(-2.0pt,2.0pt);
\draw [color=blue] (7.0,3.0) ++(-2.0pt,0 pt) -- ++(2.0pt,2.0pt)--++(2.0pt,-2.0pt)--++(-2.0pt,-2.0pt)--++(-2.0pt,2.0pt);
\draw [fill=red] (8.0,3.0) ++(-3.0pt,0 pt) -- ++(3.0pt,3.0pt)--++(3.0pt,-3.0pt)--++(-3.0pt,-3.0pt)--++(-3.0pt,3.0pt);
\draw [fill=red] (10.0,3.0) ++(-3.0pt,0 pt) -- ++(3.0pt,3.0pt)--++(3.0pt,-3.0pt)--++(-3.0pt,-3.0pt)--++(-3.0pt,3.0pt);
\draw [color=blue] (10.5,3.0) ++(-2.0pt,0 pt) -- ++(2.0pt,2.0pt)--++(2.0pt,-2.0pt)--++(-2.0pt,-2.0pt)--++(-2.0pt,2.0pt);
\draw [fill=red] (12.0,3.0) ++(-3.0pt,0 pt) -- ++(3.0pt,3.0pt)--++(3.0pt,-3.0pt)--++(-3.0pt,-3.0pt)--++(-3.0pt,3.0pt);
\draw [color=blue] (9.0,4.0) ++(-2.0pt,0 pt) -- ++(2.0pt,2.0pt)--++(2.0pt,-2.0pt)--++(-2.0pt,-2.0pt)--++(-2.0pt,2.0pt);
\draw [fill=red] (10.0,4.0) ++(-3.0pt,0 pt) -- ++(3.0pt,3.0pt)--++(3.0pt,-3.0pt)--++(-3.0pt,-3.0pt)--++(-3.0pt,3.0pt);
\draw [color=blue] (11.0,4.0) ++(-2.0pt,0 pt) -- ++(2.0pt,2.0pt)--++(2.0pt,-2.0pt)--++(-2.0pt,-2.0pt)--++(-2.0pt,2.0pt);
\draw [color=blue] (12.0,4.0) ++(-2.0pt,0 pt) -- ++(2.0pt,2.0pt)--++(2.0pt,-2.0pt)--++(-2.0pt,-2.0pt)--++(-2.0pt,2.0pt);
\draw [fill=red] (5.5,4.0) ++(-3.0pt,0 pt) -- ++(3.0pt,3.0pt)--++(3.0pt,-3.0pt)--++(-3.0pt,-3.0pt)--++(-3.0pt,3.0pt);
\draw [color=blue] (6.5,4.0) ++(-2.0pt,0 pt) -- ++(2.0pt,2.0pt)--++(2.0pt,-2.0pt)--++(-2.0pt,-2.0pt)--++(-2.0pt,2.0pt);
\draw [color=blue] (9.5,5.0) ++(-2.0pt,0 pt) -- ++(2.0pt,2.0pt)--++(2.0pt,-2.0pt)--++(-2.0pt,-2.0pt)--++(-2.0pt,2.0pt);
\draw [color=blue] (10.5,5.0) ++(-2.0pt,0 pt) -- ++(2.0pt,2.0pt)--++(2.0pt,-2.0pt)--++(-2.0pt,-2.0pt)--++(-2.0pt,2.0pt);
\draw [color=blue] (5.0,5.0) ++(-2.0pt,0 pt) -- ++(2.0pt,2.0pt)--++(2.0pt,-2.0pt)--++(-2.0pt,-2.0pt)--++(-2.0pt,2.0pt);
\draw [color=blue] (5.0,6.0) ++(-2.0pt,0 pt) -- ++(2.0pt,2.0pt)--++(2.0pt,-2.0pt)--++(-2.0pt,-2.0pt)--++(-2.0pt,2.0pt);
\draw [color=blue] (6.0,6.0) ++(-2.0pt,0 pt) -- ++(2.0pt,2.0pt)--++(2.0pt,-2.0pt)--++(-2.0pt,-2.0pt)--++(-2.0pt,2.0pt);
\draw [fill=red] (9.5,6.0) ++(-3.0pt,0 pt) -- ++(3.0pt,3.0pt)--++(3.0pt,-3.0pt)--++(-3.0pt,-3.0pt)--++(-3.0pt,3.0pt);
\draw [color=blue] (15.5,1.0) circle (2.0pt);
\draw [color=blue] (18.5,1.0) circle (2.0pt);
\draw [fill=red] (20.5,1.0) circle (2.5pt);
\draw [fill=red] (15.5,2.0) circle (2.5pt);
\draw [color=blue] (17.5,2.0) circle (2.0pt);
\draw [color=blue] (20.5,2.0) circle (2.0pt);
\draw [color=blue] (14.5,3.0) circle (2.0pt);
\draw [color=blue] (15.0,3.0) circle (2.0pt);
\draw [color=blue] (16.5,3.0) circle (2.0pt);
\draw [fill=red] (17.5,3.0) circle (2.5pt);
\draw [fill=red] (19.5,3.0) circle (2.5pt);
\draw [color=blue] (20.0,3.0) circle (2.0pt);
\draw [fill=red] (21.5,3.0) circle (2.5pt);
\draw [color=blue] (18.5,4.0) circle (2.0pt);
\draw [fill=red] (19.5,4.0) circle (2.5pt);
\draw [color=blue] (20.5,4.0) circle (2.0pt);
\draw [color=blue] (21.5,4.0) circle (2.0pt);
\draw [fill=red] (15.0,4.0) circle (2.5pt);
\draw [color=blue] (16.0,4.0) circle (2.0pt);
\draw [color=blue] (19.0,5.0) circle (2.0pt);
\draw [color=blue] (20.0,5.0) circle (2.0pt);
\draw [color=blue] (14.5,5.0) circle (2.0pt);
\draw [color=blue] (14.5,6.0) circle (2.0pt);
\draw [color=blue] (15.5,6.0) circle (2.0pt);
\draw [fill=red] (19.0,6.0) circle (2.5pt);
\draw [fill=red] (18.5,0.0) circle (2.5pt);
\draw [color=blue] (19.5,2.0) circle (2.0pt);
\end{scriptsize}
\end{tikzpicture}
\caption{A realisation of $\Tb$ under $\P_{x_0}$, and the tree $\T$ resulting from it. }
\label{f:transformation}
\end{figure}
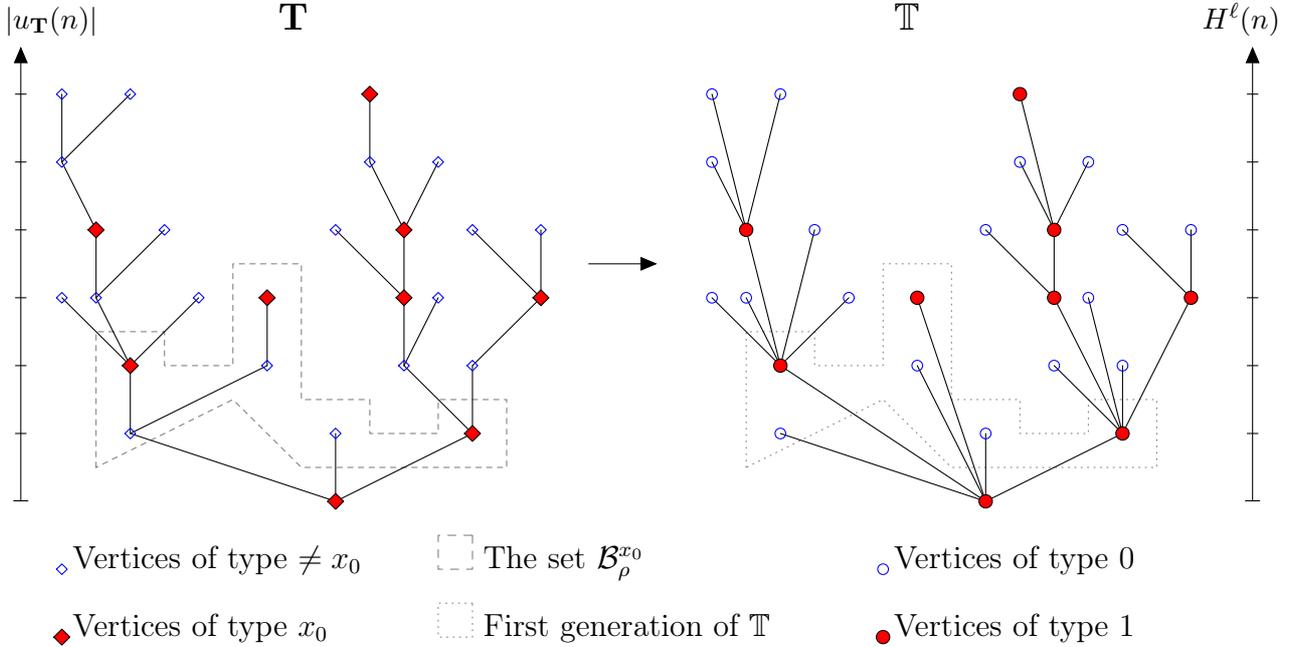
Constructing $\T$ from $\Tb$ therefore consists in untangling the "bushes" $\B_u^{x_0}$ stemming from vertices $u\in\Tb$ such that $\ty(u)=x_0$, so that all vertices are in the same generation, however keeping their lexicographical ordering, and keeping in $\ell$ the information on their initial generation in the tree $\Tb$. Of course, the construction of $\T$ from $\Tb$ can be extended to forests $\Fb$ as $\F$, by applying the reduction to each component tree. The whole purpose of this construction lies in the following proposition. 
\begin{proposition}\label{prop:egaliteprocess}
Under $\P_{x_0}$, the marked tree $(\T,e,\ell)$ is a leafed Galton--Watson tree with edge lengths. Moreover, denoting by $H^{\ell}$ the depth-first exploration process introduced in~(\ref{eq:defH}) associated to it, we have
\begin{equation*}
\forall n\in\N,\: H^{\ell}(n)=|u_{\Tb}(n)|,
\end{equation*}
where $u_{\Tb}(n)$ is the $n$\up{th} vertex of $\Tb$ for the lexicographic order. 
\end{proposition}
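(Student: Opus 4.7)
The plan is to construct an explicit bijection $\phi\colon\Tb\to\T$ (with $\phi(\root_\Tb)=\root_\T$ and $\phi(v)=v^{x_0}$ otherwise), to verify the height identity $h(\phi(v))=|v|$ by a telescoping sum, and then to establish two claims: (a) $(\T,e,\ell)$ has the law of a leafed Galton--Watson tree with edge lengths, and (b) $\phi$ preserves the lex order, i.e., $u_\T(n)=\phi(u_\Tb(n))$ for every $n\geq 0$. Combined with the height identity, these give $H^\ell(n)=h(u_\T(n))=h(\phi(u_\Tb(n)))=|u_\Tb(n)|$, as desired. The height identity itself is routine: since type-$0$ vertices of $\T$ carry no progeny, every strict ancestor of $\phi(v)$ in $\T$ has type $1$, and under $\phi^{-1}$ corresponds to a strict type-$x_0$ ancestor of $v$ in $\Tb$; summing $\ell$ along this chain telescopes to $|v|$.

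For (a), define $\zeta$ as the law under $\P_{x_0}$ of the lex-ordered sequence $\bigl((e(v^{x_0}),\ell(v^{x_0}))\bigr)_{v\in\B_\root^{x_0}}$, which is precisely the progeny of $\root_\T$ in $\T$. For every type-$1$ vertex $\phi(u)$ at generation $n\geq 1$ of $\T$, the corresponding $u\in\Tb$ has $\ty(u)=x_0$ and the progeny of $\phi(u)$ in $\T$ is a measurable function of the subtree of $\Tb$ rooted at $u$. The set of all such $u$, as $\phi(u)$ ranges over the generation-$n$ type-$1$ vertices of $\T$, consists of the type-$x_0$ vertices of $\Tb$ having exactly $n$ strict type-$x_0$ ancestors, and this forms an optional line in the sense of Jagers~\cite{jagers}. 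By the strong branching property for optional lines in multitype Galton--Watson trees, the subtrees of $\Tb$ rooted at these vertices are i.i.d.\ with law $\P_{x_0}$; hence the progenies in $\T$ of the generation-$n$ type-$1$ vertices are i.i.d.\ of law $\zeta$, and an induction on $n$ concludes.

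For (b), I proceed by induction on the height of the subtree of $\Tb$ at $u$ and prove the stronger statement: for every $u\in\Tb$ with $\ty(u)=x_0$, the lex listing of the subtree of $\Tb$ rooted at $u$ coincides, via $\phi$, with the lex listing of the subtree of $\T$ rooted at $\phi(u)$. Both lists start with $u$, resp.\ $\phi(u)$; the depth-first traversal of $\T$ from $\phi(u)$ visits its children in lex order (which, by construction, are the vertices of $\B_u^{x_0}$ listed in $\Tb$'s lex order) and recurses into the subtree at each type-$1$ child $\phi(w)$, $w\in\cal{L}_u^{x_0}$. On the $\Tb$ side, the lex depth-first traversal of the subtree at $u$ enumerates the vertices of $\B_u^{x_0}$ in lex order and, each time it encounters some $w\in\cal{L}_u^{x_0}$, visits the entire subtree of $\Tb$ at $w$ before moving to the next vertex of $\B_u^{x_0}$; this relies on the elementary DFS fact that all descendants of a vertex appear before any lex-greater non-descendant. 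Applying the induction hypothesis at each $w\in\cal{L}_u^{x_0}$ identifies the interposed blocks and matches the two listings. The main subtlety of the whole argument lies precisely in this combinatorial identification in (b); once it is established, (a) and (b) combine to give the proposition.
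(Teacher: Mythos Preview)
Your proof is correct and follows the same approach as the paper, which merely asserts in two sentences that the branching property in $\Tb$ yields the leafed Galton--Watson structure and that the equality of height processes ``naturally stems from the construction.'' Your proposal fills in precisely the details the paper omits---the explicit bijection $\phi$, the telescoping height identity, the optional-line branching argument for (a), and the inductive lex-order verification for (b)---but the underlying ideas are identical.
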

\begin{proof}
The branching property in $\Tb$ guarantees that the progenies of each vertex in $\T$ have same law, and then by construction $(\T,e,\ell)$ is a leafed Galton--Watson tree with edge lengths. The equality of depth-first exploration processes also naturally stems from the construction. 
\end{proof}
Therefore, in order to show Theorem~\ref{th:multitype}, we will just have to prove that $\F$ satisfies the hypotheses introduced in Subsection~\ref{s:introleafed} and to apply Theorem~\ref{th:bitype} to it~; that is we need to prove~:
\begin{proposition}\label{prop:hypotheses}
Under conditions $\bf (H_M)$, $\bf (H_Q)$ and $\bf (H_R^{x_0})$ on $\Fb$, $(\F,e,\ell)$ satisfies hypothesis $\bf (H)$ introduced in Part I. 
\end{proposition}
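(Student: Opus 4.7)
The plan is to read off, from the reduction in Subsection~\ref{s:reduction}, each of the five quantities entering $\bf(H)$ as a functional of $\Tb$ under $\P_{x_0}$, and then to verify them via the spinal change of measure associated to the right eigenvector~$b$. By construction, the first generation of $\T$ is in bijection with $\B^{x_0}_{\root}\subset\Tb$, carrying type~$1$ exactly on $\cal{L}^{x_0}$, and edge length $\ell(v^{x_0})=|v|$. Hence under $\P_{x_0}$,
\begin{equation*}
\nu \;\stackrel{d}{=}\; |\B^{x_0}_{\root}|, \qquad \nu^1 \;\stackrel{d}{=}\; |\cal{L}^{x_0}|,
\end{equation*}
the values $\ell(u)$ for $|u|=1,\, e(u)=0$ are the generations $|v|$ of vertices $v\in\B^{x_0}_{\root}$ with $\ty(v)\neq x_0$, and those for $|u|=1,\, e(u)=1$ are the generations of $v\in\cal{L}^{x_0}$. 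In particular $\bf(H_0^2)$ and $\bf(H_1^2)$ are literally the first and second bullets of $\bf(H_R^{x_0})$, so nothing more is needed there.

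Under $\bf(H_M)$, the matrix $p(x,y):=m_{x,y}b_y/b_x$ is an irreducible stochastic kernel on $\X$ by the right-eigenvector equation, admitting $\pi(x):=a_x b_x$ as invariant probability (left-eigenvector equation together with the normalisation $\sum_x a_x b_x=1$). The associated Markov chain $(Y_n)_{n\geq 0}$ is therefore positive recurrent, with mean return time $1/\pi(x_0)=1/(a_{x_0}b_{x_0})$ to~$x_0$; let $\tau_{x_0}$ denote this return time. The standard many-to-one formula reads
\begin{equation*}
\E_x\Bigl[\sum_{|v|=n}f(\ty(v_0),\ldots,\ty(v_n))\Bigr]\;=\;b_x\,\E^{Y}_x\!\left[\frac{f(Y_0,\ldots,Y_n)}{b_{Y_n}}\right].
\end{equation*}
Applied with $f=\1{Y_1,\ldots,Y_{n-1}\neq x_0,\,Y_n=x_0}$ and summed over $n\geq 1$, it gives $\E_{x_0}[\nu^1]=\P^{Y}_{x_0}(\tau_{x_0}<\infty)=1$, which is $\bf(H_c)$. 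Applied with $f=\1{Y_1,\ldots,Y_{n-1}\neq x_0}$ and combined with Kac's cycle formula $\E^{Y}_{x_0}[\sum_{n=0}^{\tau_{x_0}-1}\varphi(Y_n)]=\pi(x_0)^{-1}\sum_x \pi(x)\varphi(x)$ taken at $\varphi(y)=1/b_y$ (together with $\sum_x a_x=1$), the same computation yields $m=\E_{x_0}[\nu]=1/a_{x_0}<\infty$, i.e.~$\bf(H_1)$.

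For $\bf(H_c^2)$, write $\E_{x_0}[(\nu^1)^2]=\E_{x_0}[\nu^1]+\E_{x_0}[\#\{(v,v'):v\neq v'\in\cal{L}^{x_0}\}]$ and decompose each ordered pair by its most recent common ancestor $w=v\wedge v'$. Because strict ancestors of $v,v'$ avoid type~$x_0$, one has $w\in\{\root\}\cup(\B^{x_0}_{\root}\setminus\cal{L}^{x_0})$, and $v,v'$ lie in the subtrees of two distinct children of~$w$. A short many-to-one computation gives $\E_y[|\cal{L}^{x_0}|]=b_y/b_{x_0}$ for every $y\in\X$, so by the branching property, conditional on $\ty(w)=x$ the contribution of $w$ equals $b_{x_0}^{-2}\sum_{y,z}Q^x_{y,z}b_y b_z$. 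Integrating over $w$ with many-to-one and collapsing the resulting series via Kac's cycle formula yields
\begin{equation*}
\mathrm{Var}_{x_0}(\nu^1)\;=\;\frac{1}{a_{x_0}b_{x_0}^2}\sum_{x,y,z\in\X}a_x b_y Q^x_{y,z}b_z\;=\;\frac{\eta^2}{a_{x_0}b_{x_0}^2}\;\in\;(0,\infty),
\end{equation*}
by $\bf(H_Q)$, which is $\bf(H_c^2)$. As a by-product the same approach gives $\mu=\E_{x_0}[\sum_{v\in\cal{L}^{x_0}}|v|]=\E^{Y}_{x_0}[\tau_{x_0}]=1/(a_{x_0}b_{x_0})$, and one checks that the constants $2\mu/\sigma$, $m^{-1}$ of Theorem~\ref{th:bitype} match, through Brownian scaling, the constants $2/\eta$, $1$ of Theorem~\ref{th:multitype}.

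The hard part is the two-vertex computation for $\bf(H_c^2)$: one has to justify the exchange of the infinite summations over the common ancestor $w$, over the child-type pair $(y,z)$, and over their descendants, and then identify the limiting series with $\eta^2$ through Kac's formula; this is precisely where $\bf(H_Q)$ and the summabilities $\sum_x a_x<\infty$, $\sum_x a_x b_x<\infty$ built into $\bf(H_M)$ are consumed. The remaining items are either direct restatements of $\bf(H_R^{x_0})$ or one-line applications of many-to-one combined with positive recurrence of the spinal Markov chain.
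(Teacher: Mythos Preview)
Your argument is correct and matches the paper for $\bf(H_0^2)$, $\bf(H_1^2)$ (literal restatements of $\bf(H_R^{x_0})$), for $\bf(H_1)$ (many-to-one plus Kac's cycle formula giving $\E_{x_0}[\Ni^{x_0}]=1/a_{x_0}$), and for $\bf(H_c)$ (many-to-one giving $\E_{x_0}[\Zi^{x_0}]=1$).

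For $\bf(H_c^2)$ you take a genuinely different but equally valid route. You compute the second factorial moment $\E_{x_0}[\nu^1(\nu^1-1)]$ by decomposing ordered pairs $(v,v')\in\cal{L}^{x_0}\times\cal{L}^{x_0}$ according to their most recent common ancestor $w$, observe that the conditional contribution given $\ty(w)=x$ is $b_{x_0}^{-2}\sum_{y,z}b_yQ^x_{y,z}b_z$, and then sum over the location of $w$ via many-to-one and Kac's formula. The paper instead first applies many-to-one once to obtain $\E_{x_0}[(\Zi^{x_0})^2]=\Eh_{x_0}[\Zi^{x_0}]$, a \emph{first} moment under the size-biased law $\Ph_{x_0}$, and only then decomposes $\Zi^{x_0}$ along the distinguished spine $(w_k)_{0\le k\le\tauh_{x_0}}$, separating the spine vertex $w_{\tauh_{x_0}}$, its brothers of type $x_0$, and the $\cal{L}^{x_0}$-lines issued from its non-$x_0$ brothers. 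Both computations land on ${\bf Var}_{x_0}(\Zi^{x_0})=\eta^2/(a_{x_0}b_{x_0}^2)$. Your MRCA (``many-to-two'') approach is slightly more elementary in that it uses the biased measure $\Ph$ only through the one-vertex many-to-one identity; the paper's spine decomposition stays entirely within the change-of-measure framework it has already set up and makes the role of the spine more transparent, at the cost of one extra passage between $\P_{x_0}$ and $\Ph_{x_0}$.
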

It is quite straightforward that under $\P_{x_0}$, if $\Fb$ satisfies condition $\bf (H_R^{x_0})$, then conditions denoted by $\bf (H_0^2)$ and $\bf (H_1^2)$ in $\bf (H)$ are satisfied by $\F$. In Subsections~\ref{s:proofh1} and~\ref{s:proofhc}, we will prove that hypotheses $\bf (H_1)$, $\bf (H_c)$, $\bf (H_c^2)$ are also satisfied by $\F$ if $\Fb$ satisfies $\bf (H_M)$ and $\bf (H_Q)$. 
\medskip

However, we need to prove Theorem~\ref{th:bitype} first ; to do so we will separately show that $H^{\ell}$ is close to $H^1$ in space (Subsection~\ref{s:space}) and in time (Subsection~\ref{s:time}). Only after that will Section~\ref{s:proof2} be devoted to the proof of Theorem~\ref{th:multitype}.  Then, we will give an application of Theorem~\ref{th:multitype} to random laminations of the disc in Section~\ref{s:application}. Finally, the appendix at the end will propose an alternative hypothesis to $\bf (H_R^{x_0})$ that will be stronger but more convenient to check in practice. 

\section{Proof of Theorem~\ref{th:bitype}}\label{s:proof1}

\subsection{Preliminaries}
A very useful tool when working on Galton-Watson trees is the corresponding {\it size-biased} Galton--Watson tree, which we will introduce in this subsection.  
\subsubsection{Change of measure on $\T^1$}\label{subsec:measurechanget1}
Let us introduce $(W^1_n)_{n\in\N}$ the {\it additive martingale}, where for all $n\in\N$~: 
\begin{equation*}
W_n^1:=\sum_{u\in\T^1,|u|=n} 1. 
\end{equation*}
For any $n\in\N$, denoting by $\scr{F}^1_n$ the $\sigma$-algebra generated by $\{ (u,\ell(u)) \st u\in\T^1, |u|≤n \}$, hypothesis $\bf (H_c)$ and the branching property ensure that $(W^1_n)_{n\in\N}$ is an $\scr{F}^1_n$-martingale. 

Recall that $\zeta^1$ is the law of the progeny on $\bigcup_{n≥0} (\R_+)^n$ of vertices in $\T^1$ (with the convention that $(\R)^0$ is the empty sequence). Let us consider $\hzeta^1$ the probability law with Radon-Nikodym derivative $W_1^1$ with respect to $\zeta^1$, that is such that if $X\sim \zeta^1$ and if $|X|$ denotes the length of $X$, then $\hat{X}\sim \hzeta^1$ if and only if for any bounded function $f:\bigcup_{n≥0} (\R_+)^n\to \R$,
\begin{equation*}
\E\Big[f(\hat{X})\Big]=\E\Big[|X|f(X)\Big].
\end{equation*} 
Notice that almost surely the progeny induced by $\hzeta^1$ is non-empty. Let us introduce a new law $\Ph^*$ on the tree with edge lengths $(\T^1,\ell)$ with self-avoiding distinguished path starting from the root $(w_n)_{n≥0}$, each $w_n$ being at generation $n$. Under $\Ph^*$, we construct  $(\T^1,\ell,(w_n)_{n≥0})$ by induction as follows~:
\begin{itemize}
\item {\bf Initialisation} \\
 Generation $0$ of $\T^1$ is only made up of the root, denoted by $\root$, such that $e(\root)=1$ and $\ell(\root)=0$. We set $w_0=\root$. 
\item {\bf Induction} \\
Let $n≥0$. Suppose that the tree and the spine have been built up to generation $n$. The vertex $w_n$ has progeny according to $\hzeta^1$. Independently, other vertices of generation $n$ give progeny according to $\zeta^1$. The vertex $w_{n+1}$ is chosen uniformly at random among children of $w_{n}$. 
\end{itemize}
This tree is called the size-biased Galton-Watson tree with reproduction law $\zeta^1$. Notice that its construction is such that the $(\ell(w_k))_{k≥1}$ are \iid random variables. We call $\Ph$ the marginal law of $(\T^1,\ell)$ for this construction, and $\Eh$ the associated expectation. We easily adapt the arguments of \cite{lyons-permantle-peres-martingale} to get the following proposition (the only change being that here trees have edge lengths) :  
\begin{proposition}{\bf (\cite{lyons-permantle-peres-martingale})}
Recall that for any $n≥0$, $\scr{F}^1_n$ stands for the sigma-algebra generated by $\{ (u,\ell(u)) \st u\in\T^1, |u|≤n \}$. Then $\Ph_{|\scr{F}^1_n}$ is absolutely continuous with respect to $\P_{|\scr{F}^1_n}$ and is such that
\begin{equation*}
 \frac{\d \Ph}{\d\P}|_{\scr{F}_n^1}=W^1_n. 
\end{equation*}
\end{proposition}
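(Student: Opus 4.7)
The plan is to follow the classical Lyons--Pemantle--Peres argument, with only minor bookkeeping to handle the edge lengths. I would work on the extended measurable space of triples (tree, edge-length function, spine $(w_k)_{k\le n}$). Under $\Ph^*$ such a triple is already defined. To compare with $\P$, I add a fictitious spine under $\P$ by the same uniform rule as in the size-biased construction: $w_0=\root$, and conditionally on $(\T^1,\ell)$ the vertex $w_{k+1}$ is chosen uniformly among the $\nu^1(w_k)$ children of $w_k$ in $\T^1$ (with the spine truncated once $w_k$ has no child). Call the resulting joint law $\tilde{\P}$. Its $\scr{F}_n^1$-marginal is $\P_{|\scr{F}_n^1}$, so it suffices to compute $\d\Ph^*/\d\tilde{\P}$ on the joint $\sigma$-algebra and then integrate out the spine.

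\medskip

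First I would write both joint laws as products of factors over the vertices of generations~$<n$. Non-spine vertices contribute a $\zeta^1$-factor in both cases. A spine vertex $w_k$ contributes a factor $1/\nu^1(w_k)$ for the uniform choice of $w_{k+1}$ under both measures (which therefore cancels in the ratio), while its progeny is drawn from $\hzeta^1$ under $\Ph^*$ and from $\zeta^1$ under $\tilde{\P}$. Since by definition of size-biasing $\hzeta^1(dX) = |X|\,\zeta^1(dX)$, and since $|X|=\nu^1(w_k)$ when $X$ is the progeny of $w_k$, one obtains
\begin{equation*}
\frac{\d \Ph^*}{\d \tilde{\P}}\bigg|_{\scr{F}_n^1 \vee \sigma(w_0,\ldots,w_n)} \;=\; \prod_{k=0}^{n-1} \nu^1(w_k).
\end{equation*}
Integrating out the spine then yields, for any bounded $\scr{F}_n^1$-measurable $f$,
\begin{equation*}
\Eh\bigl[f(\T^1,\ell)\bigr] = \E\bigg[ f(\T^1,\ell) \sum_{(w_0,\ldots,w_n)} \prod_{k=0}^{n-1} \frac{\nu^1(w_k)}{\nu^1(w_k)} \bigg] = \E\bigl[f(\T^1,\ell)\, W_n^1\bigr],
\end{equation*}
the inner sum ranging over self-avoiding paths from $\root$ to generation $n$ in $\T^1$ (of which there are precisely $W_n^1$), and using that conditionally on $(\T^1,\ell)$ each such path has $\tilde{\P}$-probability $\prod_{k<n}1/\nu^1(w_k)$. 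This is the announced identity.

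\medskip

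There is essentially no obstacle; the only point meriting a second look is that here $\zeta^1$ is a law on $\bigsqcup_{n\ge 0} \R_+^n$ rather than on $\N$, so the identity $\hzeta^1(dX) = |X|\,\zeta^1(dX)$ must be read as a Radon--Nikodym identity on this disjoint union. This causes no issue, since $|X|$ depends only on the number of children and not on the lengths, so the cancellation at each spine vertex goes through unchanged and $W_n^1$ itself does not involve $\ell$.
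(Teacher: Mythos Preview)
Your argument is correct and is precisely the Lyons--Pemantle--Peres spine decomposition that the paper invokes; the paper does not give its own proof but simply cites~\cite{lyons-permantle-peres-martingale} and notes that the presence of edge lengths changes nothing, which is exactly the remark you make at the end.
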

\noindent For the rest of the paper, as the context should ensure that there is no ambiguity, for convenience we will indifferently denote $\Ph$ or $\Ph^*$ by $\Ph$, and by $\Eh$ their associated expectation. A consequence of this proposition is the \textit{many-to-one lemma}, which can be shown by induction~: 
\begin{lemma}\label{lemma:many-to-one}
Let $n\in\N$, $g:\R^{n+1}\to\R$ be a measurable function, and $X_n$ a $\scr{F}^1_n$-measurable random variable. Then, \\
\begin{equation*}
\E\Big[\sum_{|u|=n,u\in\T^1}g(\ell(u_0),\ldots,\ell(u_{n-1}),\ell(u))X_n\Big]=\Eh\Big[g(\ell(w_0),\ldots,\ell(w_{n-1}),\ell(w_n))X_n\Big]. 
\end{equation*}
\end{lemma}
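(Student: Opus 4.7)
The plan is to combine the change of measure from the preceding proposition with the observation that, under $\Ph$, conditional on $\scr{F}_n^1$, the spine vertex $w_n$ is uniformly distributed on the (non-empty) set $\{u\in\T^1 \st |u|=n\}$. I would first establish this conditional uniformity by induction on $n$: it is trivial at $n=0$ with $w_0=\root$, and if it holds at level $n$, then by construction $w_{n+1}$ is chosen uniformly among the children of $w_n$ whose progeny is drawn from $\hzeta^1$; since $\d\hzeta^1/\d\zeta^1 = |X|$ cancels the $1/|\text{progeny}|$ from the uniform choice, a direct computation shows that each vertex of generation $n+1$ is selected with probability $1/W_{n+1}^1$ conditionally on $\scr{F}_{n+1}^1$. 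This is a standard spine decomposition property, adapted here to the case with edge lengths.

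Granted this, the main identity follows in two lines. Since $X_n$ is $\scr{F}_n^1$-measurable, and since on $\{W_n^1>0\}$ the uniform law of $w_n$ gives
\begin{equation*}
\Eh\Bigl[g(\ell(w_0),\ldots,\ell(w_n))X_n \,\Big|\, \scr{F}_n^1\Bigr]=\frac{1}{W_n^1}\sum_{|u|=n,\,u\in\T^1}g(\ell(u_0),\ldots,\ell(u))\,X_n,
\end{equation*}
taking expectation under $\Ph$ and applying $\d\Ph/\d\P|_{\scr{F}_n^1}=W_n^1$ cancels the $W_n^1$ factor, yielding the right-hand side of the lemma. On $\{W_n^1=0\}$ both sides vanish (the sum is empty, and $\Ph(W_n^1=0)=0$ since the spine guarantees a vertex at every generation), so this covers all cases.

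Alternatively, as the statement suggests, one can argue by induction on $n$. The base case $n=0$ is trivial. For the inductive step, one decomposes the sum at generation $n+1$ as $\sum_{|v|=n,\,v\in\T^1}\sum_{u \text{ child of } v}$, conditions on $\scr{F}_n^1$, and uses the identity $\E[\sum_{u \text{ child of }v}f(\ell(u))]=\Eh[f(\ell(w_1))]/\E[|X|]$ combined with the criticality assumption $\bf(H_c)$ (so $\E[|X|]=1$) to replace the inner sum by the corresponding spine-integrated quantity; then one applies the induction hypothesis. The main bookkeeping obstacle in either approach is that $X_n$ may depend on the full edge-length decoration of $\T^1$ up to level $n$ (not just on the ancestral path of a single vertex), so one has to be careful to keep $X_n$ outside the inner sum in the change-of-measure step; because $X_n$ is $\scr{F}_n^1$-measurable and the density $W_n^1$ is also $\scr{F}_n^1$-measurable, this is automatic once the conditioning is set up correctly.
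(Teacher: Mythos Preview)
Your proposal is correct. The paper itself does not spell out a proof, only noting that the lemma ``can be shown by induction''; your second approach is exactly that, and your first approach via the conditional uniformity of $w_n$ on $\{u\in\T^1:|u|=n\}$ given $\scr{F}_n^1$ is the standard Lyons--Pemantle--Peres alternative, equally valid here.
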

\noindent Notice that applying this lemma, we can re-write hypothesis $\bf(H_1^2)$ as~: 
\begin{itemize}
\item [$(\bf H_1^2)$ :] $y^2\Ph\Big(\ell(w_1)>y \Big)\sto{y\to\infty} 0$. 
\end{itemize}

\subsubsection{Estimates on critical Galton--Watson forests}

Recall that under $\bf (H)$, the forest $\F^1$ is a critical Galton--Watson forest. Recall also that we denote by $u^1(1),\ldots,u^1(n),\ldots$ its vertices taken in the lexicographic order. The following lemma, which is a straightforward consequence of Corollary~2.5.1 of~\cite{duquesne-le-gall}, will allow us to control the shape of~$\F^1$~:

\begin{lemma}\label{lemma:estimatesGW}
Let $\Gamma^1_n:=u^1(n)_0$ be the index of the tree in $\F^1$ to which the n\up{th} vertex of $\F^1$ belongs. Then under $\bf (H)$, for all $\eps>0$, there exist $M,M'>0$ such that for all sufficiently large $n\in\N$,
\begin{equation*}
\P\Big(\Gamma^1_n>M\sqrt{n}\textrm{ or }\max_{0≤i≤n} |u(i)|>M'\sqrt{n} \Big)<\eps.  
\end{equation*}
\end{lemma}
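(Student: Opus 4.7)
My plan is to deduce both bounds from the joint scaling limit of the rescaled height process and tree-index process of $\F^1$. Under $\bf(H_c)$ and $\bf(H_c^2)$ the forest $\F^1$ is a critical Galton--Watson forest whose offspring distribution has finite variance $\sigma^2\in(0,\infty)$, so Corollary~2.5.1 of~\cite{duquesne-le-gall} applies and yields, in the Skorokhod topology on $\mathbb{D}(\R_+,\R^2)$,
\begin{equation*}
\left( \frac{H^1(\fl{ns})}{\sqrt{n}},\; \frac{\Gamma^1_{\fl{ns}}}{\sqrt{n}} \right)_{s\ge 0} \sto[\Longrightarrow]{n\to\infty} \left( \frac{2}{\sigma}|B_s|,\; \sigma L_s^0 \right)_{s\ge 0},
\end{equation*}
where $B$ is a standard Brownian motion and $L^0$ its local time at $0$.

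From this convergence I would extract marginal tightness at time $s=1$. Restricted to paths whose first coordinate is continuous at $1$, the functional $(f,g)\mapsto (\sup_{[0,1]}f,\, g(1))$ is continuous for the Skorokhod topology, and both coordinates of the limit have almost surely continuous paths; hence the continuous mapping theorem gives
\begin{equation*}
\left( \frac{1}{\sqrt{n}}\max_{0\le i\le n} H^1(i),\; \frac{\Gamma^1_n}{\sqrt{n}} \right) \sto[\Longrightarrow]{n\to\infty} \left( \frac{2}{\sigma}\sup_{s\in[0,1]}|B_s|,\; \sigma L_1^0 \right).
\end{equation*}
The limiting pair is almost surely finite, so given $\eps>0$ I can fix $M_0,M_0'>0$ large enough that the event $\{\sigma L_1^0\ge M_0\}\cup\{(2/\sigma)\sup_{s\in[0,1]}|B_s|\ge M_0'\}$ has probability at most $\eps/2$. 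Applying the Portmanteau lemma to this closed set yields, for all $n$ large enough,
\begin{equation*}
\P\Big(\Gamma^1_n\ge M_0\sqrt{n}\ \text{or}\ \max_{0\le i\le n} H^1(i)\ge M_0'\sqrt{n}\Big)<\eps.
\end{equation*}

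Finally, I would transfer the second bound from $H^1(i)=|u^1(i)|$ to $|u(i)|$: since every type-$0$ vertex of $\F$ has a type-$1$ parent (only type-$1$ vertices reproduce) and the first $n$ vertices of $\F$ contain at most $n$ vertices of $\F^1$, one has $\max_{0\le i\le n}|u(i)|\le 1+\max_{0\le j\le n}|u^1(j)|$; absorbing the $+1$ into a slightly larger constant $M'>M_0'$ and setting $M=M_0$ produces the stated inequality. No step is really delicate; the only technical point to watch is the continuity of the supremum functional on $\mathbb{D}$, which is standard once one uses that the limit process is almost surely continuous.
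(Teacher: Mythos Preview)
Your proof is correct and follows essentially the same route as the paper: invoke Corollary~2.5.1 of \cite{duquesne-le-gall} for the critical finite-variance forest $\F^1$, read off tightness of $\Gamma^1_n/\sqrt{n}$ and of $\max_{i\le n}|u^1(i)|/\sqrt{n}$ from the scaling limit, and combine. The only substantive difference is cosmetic: the paper treats the two events separately (each with budget $\eps/3$) and uses a union bound, whereas you pass through the joint limit and apply Portmanteau once.

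One point where you are in fact more careful than the paper: the statement involves $\max_{0\le i\le n}|u(i)|$ (vertices of $\F$), while the Duquesne--Le Gall result applies to $H^1(i)=|u^1(i)|$ (vertices of $\F^1$). The paper's proof writes the convergence directly for $|u(i)|$ without comment, which is either a typo for $|u^1(i)|$ or an implicit use of exactly the observation you spell out, namely $\max_{i\le n}|u(i)|\le 1+\max_{j\le n}|u^1(j)|$ since every type-$0$ vertex has a type-$1$ parent that precedes it lexicographically. Your transfer step is correct and closes this small gap.
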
 
\begin{proof}
According to Corollary~2.5.1 of~\cite{duquesne-le-gall}, 
\begin{equation*}
\P\Big( \frac{\Gamma^1_n}{\sqrt{n}}>M \Big)\sto{n\to\infty} \P\Big( \sigma L^0_1>M \Big)<\frac{\eps}{3}
\end{equation*}
for $M$ large enough, where $L_1^0$ is the local time at level $0$ at time 1 of a standard Brownian motion. Moreover, 
\begin{equation*}
\P\Big( \frac{\max_{0≤i≤n}{|u(i)|}}{\sqrt{n}}>M' \Big)\sto{n\to\infty} \P\Big(\max_{0≤s≤1}\frac{2}{\sigma}|B_s|>M' \Big)<\frac{\eps}{3}
\end{equation*}
for $M'$ large enough, where $(B_s)_{0≤t≤1}$ is a standard Brownian motion. The union bound concludes the proof. 
\end{proof}

\subsection{Spatial scaling}\label{s:space}

Let for $i\in\N$,  $\varphi(i)$ be the index of $u(i)$ in $\F^1$ if $e(u(i))=1$, or of its parent in $\F^1$  if $e(u(i))=0$~; that is
\begin{equation}\label{eq:defphi}
\varphi(i):=\begin{cases}
k\textrm{, where } u^1(k)=u(i) & \textrm{ if } e(u(i))=1 \\
k\textrm{, where } u^1(k)=\parent{u(i)} & \textrm{ if } e(u(i))=0.
\end{cases}
\end{equation}
In a way, $\varphi$ is the function of re-indexation from $\F$ to $\F^1$. Recall from (\ref{eq:defH}) the definition of $H^1$ and $H^\ell$. We introduce the following proposition, which shows that $\mu$ is the right spatial scale between $H^1$ and $H^\ell$ : 

\begin{proposition}\label{prop:vertical}
Let $(\F,e,\ell)$ be a leafed Galton--Watson forest with edge lengths satisfying hypothesis $\bf (H)$. Then, for all $\eps>0$, 
\begin{equation*}
\P\Big(\max_{1≤ i ≤ n}\Big|H^{\ell}(i)-\mu H^1(\varphi(i))\Big|>\eps \sqrt{n}\Big)\sto{n\to\infty} 0. 
\end{equation*} 
\end{proposition}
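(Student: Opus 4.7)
\emph{Plan.} The natural strategy is to decompose the discrepancy $H^\ell(i)-\mu H^1(\varphi(i))$ into a ``main'' part and a ``boundary'' part. Writing $v:=u^1(\varphi(i))$ for the lowest type-$1$ ancestor of $u(i)$ (equal to $u(i)$ itself when $e(u(i))=1$) and setting $B(v):=\sum_{k=1}^{|v|}(\ell(v_k)-\mu)$, a direct inspection of the definitions yields
\begin{equation*}
H^\ell(i)-\mu H^1(\varphi(i))=B(v)+\ell(u(i))\1{e(u(i))=0}.
\end{equation*}
By Lemma~\ref{lemma:estimatesGW}, it suffices to work on a high-probability event on which all visited type-$1$ vertices lie in a box $\cal{V}_n$ formed by the first $M\sqrt{n}$ trees of $\F^1$, each of depth at most $M'\sqrt{n}$; in particular $|\cal{V}_n|=O(n)$.

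The boundary term is the easy part. By the branching property, the random variables $M_v:=\max\{\ell(u):u\text{ type-$0$ child of }v\}$ are iid for $v\in\F^1$, and hypothesis $\bf (H_0^2)$ gives $\P(M_v>y)=o(y^{-2})$. A first-moment bound over $\cal{V}_n$ then yields $\P(\max_{v\in\cal{V}_n}M_v>\eps\sqrt{n})=o(1)$, which handles $\ell(u(i))\1{e(u(i))=0}$ uniformly in $i\leq n$.

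The heart of the proof is the estimate $\max_{v\in\cal{V}_n}|B(v)|=o(\sqrt{n})$ in probability. For a single critical Galton--Watson tree $\T^1$, a ``first-passage'' argument (counting the topmost ancestor on each path with $|B|>R$) combined with the many-to-one lemma (Lemma~\ref{lemma:many-to-one}) gives
\begin{equation*}
\P\bigl(\exists v\in\T^1,\;|v|\leq K,\;|B(v)|>R\bigr)\leq\Ph\bigl(\max_{k\leq K}|S_k|>R\bigr),
\end{equation*}
where $(S_k)_{k\geq 0}$ is the random walk with iid mean-zero increments $\ell(w_j)-\mu$ along the size-biased spine under $\Ph$. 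Hypothesis $\bf (H_1^2)$, rewritten via many-to-one, gives $\Ph(|\ell(w_1)-\mu|>y)=o(y^{-2})$ and hence a truncated second moment $\Eh[(\ell(w_1)-\mu)^2\1{|\ell(w_1)-\mu|\leq y}]=o(y)$. Taking $y=\sqrt{n}$, Kolmogorov's maximal inequality on the truncated walk together with a union bound over the events of a large jump yield $\Ph(\max_{k\leq M'\sqrt{n}}|S_k|>\eps\sqrt{n})=o(1)$.

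The step I expect to be the principal obstacle is then combining this per-tree $o(1)$ estimate with a union over the $M\sqrt{n}$ trees of $\cal{V}_n$, since a naive union bound delivers only $o(\sqrt{n})$. The remedy is to split the trees by height using the classical critical tail $\P(h(\T^1)\geq h)\sim c/h$: ``tall'' trees ($h(\T^1)\geq\delta\sqrt{n}$) appear only $O(1)$ times in expectation and are handled via a Kesten-type infinite-tree decomposition, whose spine carries iid increments controlled by Kolmogorov; on ``short'' trees the ancestor-line walk has at most $\delta\sqrt{n}$ steps, sharpening the Kolmogorov bound to order $o(\delta)$. A careful choice of $\delta$ (then sending $n\to\infty$) delivers the required uniform $o(1)$ bound.
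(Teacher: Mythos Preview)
Your decomposition into the ``boundary'' term $\ell(u(i))\1{e(u(i))=0}$ and the ``main'' term $B(v)$ is exactly the right first move, and your treatment of the boundary term via $\bf(H_0^2)$ is fine. The first-passage inequality
\[
\P\bigl(\exists v\in\T^1,\ |v|\le K,\ |B(v)|>R\bigr)\ \le\ \Ph\bigl(\max_{k\le K}|S_k|>R\bigr)
\]
is also correct. The difficulty is entirely in the quantitative bound you extract from it.

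The gap is in the short-tree step. Under $\bf(H_1^2)$ the increment $X=\ell(w_1)-\mu$ satisfies $\Ph(|X|>y)=o(y^{-2})$, but this does \emph{not} force $\Eh[X^2]<\infty$; the truncated second moment $\sigma_n^2:=\Eh[X^2\1{|X|\le\sqrt n}]$ may diverge (e.g.\ like $\log\log n$). Kolmogorov's inequality for a walk of length $\delta\sqrt n$ then gives only $\delta\,\sigma_n^2/(\eps^2\sqrt n)$ per tree, and after summing over the $M\sqrt n$ trees you get $M\delta\,\sigma_n^2/\eps^2$, which need not tend to $0$ for any fixed $\delta>0$. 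Letting $\delta=\delta_n\to0$ does not help either, since then the expected number of ``tall'' trees $Mc/\delta_n\to\infty$, and the Kesten decomposition of a tall tree produces $O(\sqrt n)$ off-spine unconditioned subtrees, reproducing exactly the same summation problem. In short, a second-moment maximal inequality is one power of $n$ too weak to survive the union over $\sqrt n$ trees.

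The paper's proof fixes this by going to higher moments. It first truncates at the lower level $v_n=n^{3/8}$ and shows in two preliminary steps that (a) no edge among the first $n$ vertices exceeds some $s_n=o(\sqrt n)$, and (b) on each ancestral path at most one edge exceeds $v_n$; hence replacing $\ell$ by $\ell^{(n)}=\ell\1{\ell<v_n}$ costs only an additive $s_n=o(\sqrt n)$. Then, for the truncated walk, an \emph{eighth}-moment computation (the truncated $r$-th moment is $O(v_n^{r-2})$) yields $\Ph(|S_k|>\tfrac{\eps}{2}\sqrt n)\le Cn^{-5/4}\log n$ uniformly in $k\le M'\sqrt n$, which \emph{is} summable over the $MM'n$ pairs (tree, generation). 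If you want to keep your architecture, replace Kolmogorov by a Rosenthal-type high-moment bound after truncating at a level strictly below $\sqrt n$; the tall/short split then becomes unnecessary.
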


\begin{proof}

First of all, let us show that 
\begin{equation}\label{eq:vertical(i)}
\P\Big(\max_{1≤ i ≤ n}\Big|H^{\ell}(i)- h(u^1(\varphi(i)))\Big|>\eps \sqrt{n}\Big)\sto{n\to\infty} 0. 
\end{equation}
According to the definition of $\varphi$, for all $i\in\N$, $H^{\ell}(i)- h(u^1(\varphi(i)))=\ell(u(i))\1{e(u(i))=0}$. Hence,  
\begin{align*}
\P\Big(\max_{1≤ i ≤ n}\Big|H^{\ell}(i)- h(u^1(\varphi(i)))\Big|>\eps \sqrt{n}\Big)&= \P\Big(\max_{1≤ i ≤ n}\ell(u(i))\1{e(u(i))=0}>\eps \sqrt{n}\Big)\\
&≤ \P\Big(\max_{0≤ j ≤ n-1} \max_{\parent{u}=u(j),e(u)=0}\ell(u)>\eps \sqrt{n}\Big), 
\end{align*}
since any $u(i)$ of type $0$ for $1≤i≤n$ is the child of a $u(j)$ for $0≤j≤n-1$. Applying the union bound, we get
\begin{equation*}
\P\Big(\max_{0≤ j ≤ n-1} \max_{\parent{u}=u(j),e(u)=0}|\ell(u)|>\eps \sqrt{n}\Big)≤ \sum_{0≤j≤n-1} \P\Big(\max_{|u|=1,e(u)=0} |\ell(u)|>\eps \sqrt{n}\Big),
\end{equation*}
the last sum tending to $0$ as $n$ tends to infinity, according to hypothesis $\bf (H_0^2)$, thus yielding~(\ref{eq:vertical(i)}). Now, noticing that for all $i\in\N$, $\varphi(i)≤i$, it suffices to show that 
\begin{equation}\label{eq:vertical}
\P\Big(\max_{1≤ i ≤ n}\Big|h(u^1(i))-\mu H^1(i)\Big|>\eps \sqrt{n}\Big)\sto{n\to\infty} 0, 
\end{equation} 
and to combine it with~(\ref{eq:vertical(i)}) to conclude the proof of the proposition. To this end, we will use a method employed in the proof that appears in Section~3 of~\cite{durrett-kesten-waymire}, which is built in 3 steps \,-- but we will have to adjust some parts. We emphasise that until the end of the proof, all considered vertices are in $\T^1$ or $\F^1$, and that the lexicographical order $u^1$ is also taken in $\T^1$ or $\F^1$.\\
The first step is to show that~: 
\begin{equation}\label{eq:maxbn}
\P\Big( \exists i≤ n \st \ell(u^1(i))>s_n \Big)\;\sto{n\to\infty}\;0,
\end{equation}
where $(s_n)_{n\in\N}$ is any sequence of real numbers such that 
\begin{equation*}
 s_n\substack{ \\ =\\ n\to\infty} o(n^{\frac{1}{2}})\textrm{\hspace{1cm} and\hspace{1cm} }\Ph(\ell(w_1)>s_n)\;\substack{\\= \\n\to\infty}\; o(\frac{1}{n}) 
\end{equation*}
(such a sequence exists thanks to condition $\bf (H_1^2)$). According to Lemma~\ref{lemma:estimatesGW}, for any $\eps>0$, and then $M,M',n$ large enough, 
\begin{equation*}
\P\Big( \exists i≤ n \st \ell(u^1(i))>s_n \Big) ≤ \eps + \P\Big( \exists i≤ n\st \ell(u^1(i))>s_n, \Gamma^1_n<M\sqrt{n}, \max_{1≤i≤n} |u^1(i)|<M'\sqrt{n} \Big). 
\end{equation*}
Discussing on which tree $u^1(i)$ belongs to and on its generation we get~: 
\begin{align*}
\P\Big( \exists i≤ n \st \ell(u^1(i))>s_n \Big) &≤ \eps + \sum_{k=1}^{\fl{M\sqrt{n}}}\E\Big[ \sum_{l=1}^{\fl{M'\sqrt{n}}} \sum_{|u|=l,u\in\T^1} \1{\ell(u)>s_n} \Big] \\
&≤ \eps + MM'n \Ph\Big( \ell(w_1)>s_n \Big)=\eps+o(1),
\end{align*}
where we used the many-to-one lemma (Lemma~\ref{lemma:many-to-one}) between lines 1 and 2 and then used the second property of $(s_n)_{n\in\N}$. This proves~(\ref{eq:maxbn}). \\
Now, for all $n\in\N$, we let $v_n:=n^{3/8}$. The second step of our proof is to show that~: 
\begin{equation}\label{eq:max_an}
\P\Big( \exists\, i≤ n \st \ell(u^1(i))>v_n \textrm{ and } \exists\, u\vdash u^1(i),\, \ell(u)>v_n \Big)\;\sto{n\to\infty}\;0.
\end{equation}
Once again, using Lemma~\ref{lemma:estimatesGW}, for $M$, $M'$ and then $n$ large enough,~(\ref{eq:max_an}) is smaller than
\begin{equation*}
\eps +\P\Big( \exists\:i≤n, \ell(u^1(i))>v_n \textrm{ and } \exists\, u\vdash u^1(i),\, \ell(u)>v_n ,|u^1(i)|<M'\sqrt{n}, \Gamma^1_n<M\sqrt{n}\Big),
\end{equation*}
and once again discussing on which trees the vertices $u^1(i)$ belong to and on their generation, the latter sum is smaller than
\begin{align*}
\sum_{l=1}^{\fl{M\sqrt{n}}}\sum_{k=0}^{\fl{M'\sqrt{n}}}\E\Big[\sum_{|u|=k} \1{\ell(u)>v_n \textrm{ and } \exists\, v\vdash u,\, \ell(v)>v_n}\Big]&=M\sqrt{n}\sum_{k=0}^{\fl{M'\sqrt{n}}}\Ph\Big( \ell(w_k)>v_n\textrm{ and } \exists\, l<k,\, \ell(w_l)>v_n \Big) \\
&≤M\sqrt{n}\sum_{k=0}^{\fl{M'\sqrt{n}}}\Ph\Big( \ell(w_k)>v_n\Big)\sum_{l=0}^{k-1}\Ph\Big(\ell(w_l)>v_n \Big) \\
&≤MM'^2n^{3/2}\Ph\Big( \ell(w_1) >v_n \Big)^2\\
&\sto[=]{n\to\infty} o(v_n^{-4} n^{3/2})\sto[=]{n\to\infty} o(1),
\end{align*}
yielding~(\ref{eq:max_an}). \\
To sum up, we can consider now that for $n$ large enough, on every path in $\F^1$ there is at most one $u$ such that $\ell(u)>v_n$, and that for that $u$, necessarily, $\ell(u)<s_n$. More precisely, we let for all $n\in\N$, $u\in\F^1$,
\begin{equation*}
\ell^{(n)}(u):=\ell(u)\1{\ell(u)<v_n},
\end{equation*}
and we can write using~(\ref{eq:maxbn}) and~(\ref{eq:max_an})~: 
\begin{equation}\label{eq:Hencadrement}
\P\Big(\forall i≤ n,\;\sum_{u\vdash u^1(i)} \ell^{(n)}(u) -\mu H^1(i) ≤ h(u^1(i))-\mu H^1(i)≤ \sum_{u\vdash u^1(i)} \ell^{(n)}(u) +s_n -\mu H^1(i) \Big)\sto{n\to\infty} 1. 
\end{equation}
Thus, since $s_n\sto[=]{n\to\infty} o(n^{\frac{1}{2}})$, the triangle inequality yields that
\begin{equation*}
\P\Big(\exists\, i≤ n,\; \Big|\sum_{u\vdash u^1(i)} \ell^{(n)}(u) -\mu H^1(i) +s_n \Big|>\eps\sqrt{n}\Big)≤\P\Big(\exists\, i≤ n,\; \Big|\sum_{u\vdash u^1(i)} \ell^{(n)}(u) -\mu H^1(i) \Big|>\frac{\eps}{2}\sqrt{n}\Big),
\end{equation*}
for $n$ large enough, and thus we just have to show that 
\begin{equation}\label{eq:Hlimite}
\P\Big(\exists\, i≤ n,\; \Big|\sum_{u\vdash u^1(i)} \ell^{(n)}(u) -\mu H^1(i) \Big|>\eps\sqrt{n}\Big)\sto{n\to\infty} 0,
\end{equation}
and to use~(\ref{eq:Hencadrement}) to get~(\ref{eq:vertical}). This will be the last step of our proof. \\
Actually, once again using Lemma~\ref{lemma:estimatesGW}, and then applying the many-to-one lemma (Lemma~\ref{lemma:many-to-one}), 
\begin{align}\label{eq:decomplemme1}
\P\Big(\exists\, i≤ n,\; \Big|\sum_{u\vdash u^1(i)} \ell^{(n)}(u) -\mu H^1(i) \Big|>\eps\sqrt{n}\Big)&≤ \eps + M\sqrt{n}\sum_{k=0}^{\fl{M'\sqrt{n}}}\E\Big[ \sum_{|u|=k} \1{ |\sum_{v\vdash u} \ell^{(n)}(v)-\mu k|>\eps\sqrt{n} } \Big] \nonumber\\
&=\eps + M\sqrt{n}\sum_{k=0}^{\fl{M'\sqrt{n}}}\Ph\Big( |\sum_{i=1}^{k} \ell^{(n)}(w_i)-\mu k|>\eps\sqrt{n} \Big). 
\end{align}
Let us focus on the general term in the sum, for any $k≤ M'\sqrt{n}$. First, notice that $\mu=\E\left[ \sum_{|u|=1} \ell(u) \right]=\Eh\left[ \ell(w_1) \right]$ by the many-to-one lemma. Hence, $\Eh\Big[ \ell^{(n)}(w_1) \Big]\sto{n\to\infty}\mu$ by monotone convergence. Take $n$ large enough such that $\Big|\Eh\Big[ \ell^{(n)}(w_1) \Big]-\mu\Big|≤\frac{\eps}{2M}$ . We then have for any $k≤ M\sqrt{n}$,
\begin{align*}
\Ph\Big( \Big|\sum_{i=1}^{k} \ell^{(n)}(w_i)-\mu k\Big|>\eps\sqrt{n} \Big)&≤ \Ph\Big( \Big|\sum_{i=1}^{k} \ell^{(n)}(w_i)-\Eh\Big[ \ell^{(n)}(w_1) \Big]\Big|+k\Big|\Eh\Big[ \ell^{(n)}(w_1) \Big]-\mu\Big|>\eps\sqrt{n} \Big) \\
&≤ \Ph\Big( \Big|\sum_{i=1}^{k}\Big( \ell^{(n)}(w_i)- \Eh\Big[ \ell^{(n)}(w_1) \Big]\Big)\Big|>\frac{\eps}{2}\sqrt{n} \Big). 
\end{align*}
Now, for $i≥1$, let us set $X_i:= \ell^{(n)}(w_1)- \Eh\Big[ \ell^{(n)}(w_1) \Big]$. Notice that the $(X_i)_{i≥1}$ are \iid centred random variables. We have for all $r\in\brint{2;8}$, 
\begin{align*}
\xi_r(n):=\Eh\Big[|X_i|^r\Big]&= \Eh\Big[ \Big| \ell^{(n)}(w_1)- \Eh\Big[ \ell^{(n)}(w_1) \Big]\Big|^r \Big] \\
		&= r\int_{0}^{+\infty} y^{r-1}\Ph\Big( |\ell^{(n)}(w_1) - \Eh\Big[ \ell^{(n)}(w_1) \Big]|>y \Big)\d y \\
		&≤ r\int_{0}^{+\infty} y^{r-1}\Ph\Big( |\ell^{(n)}(w_1)|>y-|\Eh\Big[ \ell(w_1) \Big]| \Big)\d y \\
		&= r\int_{0}^{v_n+\Eh[\ell(w_1)]} y^{r-1}\Ph\Big( \ell(w_1)>y-\Eh\Big[ \ell(w_1) \Big] \Big)\d y,  \\
\end{align*}
where we used the triangle inequality at line 3, and then the fact that $\ell^{(n)}≤v_n$. Hypothesis $\bf(H_1^2)$ allows us to consider $M_0:=\max_{y>1} \left( y^2\Ph\left(\ell(w_1)>y-\Eh[\ell(w_1)]\right)\right)$, and then cutting the integral at $y=1$ we get 
\begin{align*}
\xi_r(n)&≤ r \Big( 1 + \int_{1}^{v_n+\Eh[\ell(w_1)]} y^{r-3}M_0 \d y \Big) \\
		&≤ c(r) v_n^{r-2}\vee \ln(v_n)≤ c(r) n^{\frac{3(r-2)}{8}}\ln(n),
\end{align*}
where $c(r)$ is a suitable constant. Thus we can write, the $X_i$ being independent,
\begin{align*}
\Eh\Big[\Big(\sum_{i=1}^{k} X_i\Big)^8 \Big]&=\sum_{\substack{0≤ i_1, \ldots, i_k ≤ 8 \\ i_1+\cdots+i_k=8}}\frac{8!}{i_1!\ldots i_k!}\prod_{j=1}^{k}\Eh\Big[ {X_j}^{i_j}\Big] \\
&=\sum_{\substack{0≤ i_1, \ldots, i_k ≤ 8 \\ i_1+\cdots+i_k=8 \\ i_1,\ldots,i_k\neq 1}}\frac{8!}{i_1!\ldots i_k!}\prod_{j=1}^{k}\Eh\Big[ {X_j}^{i_j}\Big], \\
\end{align*}
where between lines 1 and 2 we used the fact that $\Eh[X_i]=\Eh\Big[  \ell^{(n)}(w_j)- \Eh\Big[ \ell^{(n)}(w_1) \Big] \Big]=0$. Now we just have to regroup common patterns on $i_1,\ldots,i_k$ and we get that,
\begin{align*}
\Eh\Big[\Big(\sum_{i=1}^{k} X_i\Big)^8 \Big]&≤c\Big[k^4 \xi_2(n)^4+k^3\Big(\xi_2(n)^2\xi_4(n) +\xi_3(n)^2\xi_2(n)\Big) \\
&+k^2\Big(\xi_4(n)^2+\xi_6(n)\xi_2(n)+\xi_5(n)\xi_3(n)\Big)+k\xi_8(n)\Big] \\
&≤ c' n^{3-\frac{1}{4}}\ln(n), 
\end{align*}
where we used the fact that $k≤ M\sqrt{n}$ in the last inequality, and where $c$ and $c'$ are suitable constants. Applying Markov's inequality yields 
\begin{align*}
\Ph\Big( \Big|\sum_{i=1}^{k}\Big( \ell^{(n)}(w_k)- \Eh\Big[ \ell^{(n)}(w_1) \Big]\Big)\Big|>\frac{\eps}{2}\sqrt{n} \Big)
&≤ (\frac{2}{\eps})^8n^{-4} \Eh\Big[\Big(\sum_{i=1}^{k}\Big( \ell^{(n)}(w_k)- \Eh\Big[ \ell^{(n)}(w_1) \Big]\Big)\Big)^8 \Big] \\
&=(\frac{2}{\eps})^8n^{-4} \Eh\Big[\Big(\sum_{i=1}^{k} X_i\Big)^8 \Big] \\
&≤ (\frac{2}{\eps})^8c' n^{-1-\frac{1}{4}}\ln(n), 
\end{align*}
and when using this in~(\ref{eq:decomplemme1}), we finally get that 
\begin{equation*}
\P\Big(\exists\, i≤ n,\; \Big|\sum_{u\vdash u^1(i)} \ell^{(n)}(u) -\mu H^1(i) \Big|>\eps\sqrt{n}\Big)\sto{n\to\infty} 0,
\end{equation*}
which proves~(\ref{eq:Hlimite}) and concludes the proof. \\
\end{proof}
\bigskip

\subsection{Time scaling}\label{s:time}
In the previous subsection, we showed that the renormalised height function of a leafed Galton--Watson forest with edge lengths was "close in space" to the height process of $\F^1$ a simple Galton--Watson forest. Now, we want to prove that they can also be "close in time" up to a scaling. Recall from (\ref{eq:defphi}) that $\varphi$ is the function of re-indexation from $\F$ to $\F^1$. 

\begin{proposition}\label{prop:horizontal}
Recall $\bf (H)$ from Subsection~\ref{s:introleafed}, and recall that $m=\E[\nu]$. Under $\bf (H)$, the function $(\varphi(\fl{ns})/n)_{s>0}$ converges in probability to $(m^{-1}s)_{s>0}$ as $n$ tends to infinity, for the topology of uniform convergence over compact sets. 
\end{proposition}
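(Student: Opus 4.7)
My plan is to prove the proposition by studying the generalized inverse of $\varphi$, since this inverse is naturally a sum of offspring statistics along the lex order of $\F^1$. For each $k \in \N$, let $\phi^1(k)$ denote the lex-index of the $(k+1)$-th type-$1$ vertex in $\F$, i.e.\ $u(\phi^1(k)) = u^1(k)$, and let $N(i) := \#\{0 \leq j \leq i : e(u(j)) = 1\}$, which is monotone non-decreasing with unit increments. One checks that $\varphi(i) \leq N(i) - 1$ for all $i$, with equality whenever $e(u(i)) = 1$, and in general $N(i) - 1 - \varphi(i)$ counts the type-$1$ vertices lex-strictly between $u^1(\varphi(i))$ and $u(i)$, hence is bounded by the number of type-$1$ descendants of $u^1(\varphi(i))$.

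The heuristic is that each type-$1$ vertex has on average $m$ children in $\F$, so the $(k+1)$-th type-$1$ vertex should be visited at $\F$-time $\phi^1(k) \approx mk$. Writing $\phi^1(k) = k + Y_k$ where $Y_k$ counts type-$0$ vertices lex-preceding $u^1(k)$, I would decompose $Y_k$ according to the unique type-$1$ parent of each type-$0$ vertex: type-$0$ children of a non-ancestor $w \prec u^1(k)$ contribute $\nu^0(w)$ in full, whereas type-$0$ children of a strict ancestor contribute only those to the left of the branch leading to $u^1(k)$. This gives
\[
Y_k \;=\; \sum_{j=0}^{k-1}\nu^0(u^1(j)) \;-\; \sum_{w\text{ strict ancestor of } u^1(k)} \tilde\nu^0_{\mathrm{right}}(w),
\]
with $\tilde\nu^0_{\mathrm{right}}(w)$ the number of type-$0$ children of $w$ lying to the right of the branch leading to $u^1(k)$. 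The correction has at most $|u^1(k)|$ summands; by Lemma~\ref{lemma:estimatesGW}, $|u^1(k)| \leq M'\sqrt{k}$ with probability close to $1$, and since $\E[\nu^0] = m - 1 < \infty$ this correction is $o(k)$ in probability.

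The heart of the argument, and the main obstacle, is the law of large numbers
\[
\frac{1}{k}\sum_{j=0}^{k-1}\nu^0(u^1(j)) \longrightarrow m-1 \quad \text{in probability as } k \to \infty.
\]
The variables $\nu^0(u^1(j))$ are not i.i.d.\ because the lex indexing $u^1(\cdot)$ depends on the realisation of $\F^1$, and since $\F^1$ is critical the per-tree size has infinite expectation, so a direct across-trees LLN is unavailable. My plan is to condition on $\F^1$: given $\F^1$, the family $\{\nu^0(v)\}_{v\in\F^1}$ is conditionally independent with $\E[\nu^0(v)\mid \F^1] = g(\nu^1(v))$ where $g(k) := \E[\nu^0 \mid \nu^1 = k]$. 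A conditional Chebyshev bound absorbs the fluctuations around the conditional mean and reduces matters to the convergence of $k^{-1}\sum_{j=0}^{k-1} g(\nu^1(u^1(j)))$ to $\E[g(\nu^1)] = m - 1$. For this last step, I would decompose the sum into contributions from the first $\Gamma^1_k - 1$ completed trees of $\F^1$ plus a final partial tree, and use that inside a critical GW tree of large size $N$ the lex-average of a bounded function of $\nu^1$ converges to its $\zeta^1$-mean (a cyclic-lemma / conditioned-tree argument), while the $O(\sqrt{k})$ bound on the number of trees (Lemma~\ref{lemma:estimatesGW}) ensures the partial-tree correction is negligible.

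Combining the above, $\phi^1(\fl{nt})/n \to mt$ in probability pointwise in $t$. Monotonicity of $\phi^1$ and continuity of the limit promote this to uniform convergence on compacts, which by inversion yields $N(\fl{ns})/n \to m^{-1}s$ uniformly on compacts. Finally, $\sup_{i\leq \fl{ns}}(N(i) - 1 - \varphi(i))/n \to 0$ in probability by bounding it via the maximal number of type-$1$ descendants of a vertex $u^1(\varphi(i))$ visited for $i \leq \fl{ns}$, using that only $O(\sqrt{n})$ trees of $\F^1$ are seen and that a critical GW tree has size $\geq \eps n$ with probability $O(n^{-1/2})$.
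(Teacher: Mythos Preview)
Your overall architecture---pass to the inverse of $\varphi$, write it as a main sum over lex-ordered type-$1$ vertices minus an ancestral correction, then invert via monotonicity and Dini---is the same as the paper's. But two of your key steps are mishandled, one by overcomplication and one by undercomplication, and the second is a genuine gap.

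First, your ``main obstacle'' is not an obstacle. The sequence $\big(\nu^0(u^1(j))\big)_{j\ge 0}$ \emph{is} i.i.d.\ with law that of $\nu^0$ under $\zeta$. The full offspring of each type-$1$ vertex is an independent draw from $\zeta$, and the depth-first exploration of $\F^1$ reveals these draws one at a time; this is exactly the fact that makes the Lukasiewicz path of $\F^1$ a random walk with i.i.d.\ increments, except that here one records the full $\zeta$-draw at each step rather than only its $\nu^1$-part. So the weak law of large numbers gives $k^{-1}\sum_{j<k}\nu^0(u^1(j))\to m-1$ directly, and your conditional-Chebyshev / cyclic-lemma detour (which you leave sketchy in any case) is unnecessary. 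The paper simply writes $\psi(n)=\sum_{k<n}\nu(u^1(k))-R(n)$ and invokes the LLN on the first sum.

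Second, and this is where the actual work lies, your dismissal of the ancestral correction is not justified under ${\bf(H)}$. You argue that there are at most $|u^1(k)|\le M'\sqrt{k}$ summands and $\E[\nu^0]<\infty$, hence the sum is $o(k)$. But the $\nu^0(w)$ for $w\vdash u^1(k)$ do \emph{not} carry the unconditional law: conditioning a vertex to be an ancestor biases its $\nu^1$ upward, and since $\nu^0$ and $\nu^1$ may be correlated, the ancestral $\nu^0$'s can have much larger---possibly infinite---mean. The hypotheses give only $\E[\nu]<\infty$ and $\E[(\nu^1)^2]<\infty$; nothing controls $\E[\nu^0\nu^1]$. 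For instance, if $\nu^0=(\nu^1)^2$ and $\nu^1$ has finite second but infinite third moment, then the size-biased $\nu^0$ has infinite expectation, and your bound collapses. This is precisely why the paper's proof of $R(n)=o_\P(n)$ is the substantive part: it bounds $R(n)\le\sum_{u\vdash u^1(n)}\nu(u)$, truncates the ancestral $\nu^1$ at a level $c_n=o(\sqrt{n})$ chosen so that $\P(\nu^1>c_n)=o(1/n)$, time-reverses the Lukasiewicz path so that ancestors of $u^1(n)$ become strict ascending ladder epochs of the reversed walk, and then controls the expectation via the renewal measure of the weak descending ladder heights. You would need an argument of comparable strength here. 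Your final estimate on $N(i)-1-\varphi(i)$ has the same flavour of difficulty: the union bound you sketch (``$O(\sqrt{n})$ trees, each of size $\ge\eps n$ with probability $O(n^{-1/2})$'') yields only $O(1)$, not $o(1)$.
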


\begin{proof}
We  only need to prove : 
\begin{equation}\label{eq:cvm}
\frac{\varphi(n)}{n}\substack{\P \\ \longrightarrow \\ n\to \infty} m^{-1}.
\end{equation}
Indeed, this would imply the convergence in probability of the finite-dimensional marginal distributions of $(\varphi(\fl{ns})/n)_{s≥0}$ towards those of $(m^{-1}s)_{s≥0}$. Since $(m^{-1}s)_{s≥0}$ is a continuous function, and since the $(\varphi(\fl{ns})/n)_{s≥0}$ are non-decreasing functions for $n≥1$, a standard argument due to Dini would yield the convergence in law on Skorokhod's space. The limit process $(m^{-1}s)_{s≥0}$ being deterministic and continuous, this convergence would also holds in probability on the topology of uniform convergence over compact sets, as required. \\

Let $\psi$ be the function of re-indexation from $\F^1$ to $\F$, that is we set for all $n\in\N$, \break $\psi(n):=\#\{ u\in\F \st u\prec u^1(n) \} $. Somehow, $\psi$ can be seen as the inverse function of $\varphi$. Just as in the proof of Proposition 6 in~\cite{miermont}, notice that we have for all $n\in\N$ 
\begin{equation*}
\psi(n)=\sum_{k=0}^{n-1} \nu(u^1(k))-\underbrace{\sum_{k=0}^{n-1}\#\{ u\in\F \st \parent{u}=u^1(k), u^1(n)\prec u \}}_{:=R(n)}, 
\end{equation*}
that is $\psi(n)$ is the sum of the number of children of each vertex lexicographically smaller than $u^1(n)$, minus the children which come lexicographically after $u^1(n)$. 

\begin{figure}[H]
\begin{tikzpicture}[line cap=round,line join=round,>=triangle 45,x=1.5cm,y=0.5cm]
\clip(7,-1.0) rectangle (20.8,12);
\draw (12.5,0.0)-- (11.0,1.5);
\draw (12.5,0.0)-- (13.0,1.5);
\draw (11.0,1.5)-- (10.5,3.0);
\draw (13.0,1.5)-- (12.5,3.0);
\draw (13.0,1.5)-- (14.0,3.0);
\draw (14.0,3.0)-- (14.0,4.0);
\draw (14.0,3.0)-- (15.5,4.5);
\draw (12.5,3.0)-- (12.0,4.5);
\draw (12.5,3.0)-- (13.0,5.0);
\draw (12.0,4.5)-- (11.0,6.0);
\draw (12.0,4.5)-- (12.0,6.5);
\draw (12.5,3.0)-- (11.0,4.0);
\draw (13.0,5.0)-- (12.5,7.0);
\draw (13.0,5.0)-- (13.5,7.0);
\draw (13.0,5.0)-- (14.0,6.5);
\draw (12.5,3.0)-- (14.0,5.0);
\draw (12.5,7.0)-- (11.5,9.0);
\draw (12.5,7.0)-- (12.5,8.5);
\draw (12.5,8.5)-- (13.0,10.0);
\draw (12.5,8.5)-- (11.5,10.5);
\draw (11.0,1.5)-- (11.7,2.6);
\draw (11.0,1.5)-- (10.1,2.4);
\draw (11.0,1.5)-- (11.0,2.5);
\draw (13.0,1.5)-- (12.1,2.4);
\draw (13.0,1.5)-- (13.1,2.6);
\draw (13.0,1.5)-- (14.0,2.1);
\draw (14.0,3.0)-- (13.4,3.6);
\draw (14.0,3.0)-- (14.7,3.2);
\draw (14.0,3.0)-- (14.3,3.8);
\draw (15.5,4.5)-- (15.3,5.2);
\draw (14.0,4.0)-- (14.6,4.8);
\draw (14.0,5.0)-- (13.9,5.6);
\draw (14.0,5.0)-- (14.5,5.7);
\draw (11.0,4.0)-- (11.5,4.9);
\draw (11.0,4.0)-- (10.5,4.9);
\draw (13.0,5.0)-- (12.5,6.0);
\draw (13.0,5.0)-- (13.0,6.5);
\draw (13.0,5.0)-- (13.6,5.4);
\draw (14.0,6.5)-- (13.9,7.4);
\draw (14.0,6.5)-- (14.5,7.6);
\draw (13.5,7.0)-- (13.5,8.2);
\draw (12.5,7.0)-- (12.9,7.8);
\draw (12.5,7.0)-- (12.2,8.1);
\draw (12.5,7.0)-- (11.4,7.9);
\draw (12.0,4.5)-- (11.7,5.8);
\draw (11.0,6.0)-- (10.5,7.0);
\draw (11.0,6.0)-- (11.4,7.0);
\draw (12.5,8.5)-- (12.4,9.6);
\draw (13.0,10.0)-- (13.6,10.8);
\draw (13.0,10.0)-- (12.4,11.3);
\draw (11.5,10.5)-- (10.9,11.5);
\draw (11.5,10.5)-- (11.9,11.7);
\draw (11.5,9.0)-- (11.0,10.2);
\draw (12.5,3.0)-- (13.2,4.5);
\draw (13.0,1.5)-- (15.0,2.2);
\draw (12.5,7.0)-- (13.2,7.6);
\draw (16.5,3.1) node[anchor=north west] {$R(n)$};
\draw (16.5,1.9) node[anchor=north west] {$\psi(n)$};
\draw [fill=green] (12.5,0.0) circle (2pt);
\draw [fill=green] (11.0,1.5) circle (2.0pt);
\draw [fill=green] (13.0,1.5) circle (2pt);
\draw [fill=green] (10.5,3.0) circle (2pt);
\draw [fill=green] (12.5,3.0) circle (2pt);
\draw [color=red] (14.0,3.0)-- ++(-2pt,-2pt) -- ++(4.0pt,4.0pt) ++(-4.0pt,0) -- ++(4.0pt,-4.0pt);
\draw [color=black] (14.0,4.0) circle (2pt);
\draw [color=black] (15.5,4.5) circle (2pt);
\draw [fill=green] (12.0,4.5) circle (2pt);
\draw [fill=green] (13.0,5.0) circle (2pt);
\draw [fill=green] (11.0,6.0) circle (2pt);
\draw [fill=green] (12.0,6.5) circle (2pt);
\draw [fill=green] (11.0,4.0) circle (2pt);
\draw [fill=green] (12.5,7.0) circle (2pt);
\draw [color=red] (13.5,7.0)-- ++(-2pt,-2pt) -- ++(4.0pt,4.0pt) ++(-4.0pt,0) -- ++(4.0pt,-4.0pt);
\draw [color=red] (14.0,6.5)-- ++(-2pt,-2pt) -- ++(4.0pt,4.0pt) ++(-4.0pt,0) -- ++(4.0pt,-4.0pt);
\draw [color=red] (14.0,5.0)-- ++(-2pt,-2pt) -- ++(4.0pt,4.0pt) ++(-4.0pt,0) -- ++(4.0pt,-4.0pt);
\draw [fill=green] (11.5,9.0) circle (2pt);
\draw [fill=blue] (12.5,8.5) circle (2pt);
\draw (12.5,8.5) node[anchor=west] {\scriptsize $u(n)$};
\draw [color=black] (13.0,10.0) circle (2pt);
\draw [color=black] (11.5,10.5) circle (2pt);
\draw [fill=green] (11.7,2.6) circle (2pt);
\draw [fill=green] (10.1,2.4) circle (2pt);
\draw [fill=green] (11.0,2.5) circle (2pt);
\draw [fill=green] (12.1,2.4) circle (2pt);
\draw [color=red] (13.1,2.6)-- ++(-2pt,-2pt) -- ++(4.0pt,4.0pt) ++(-4.0pt,0) -- ++(4.0pt,-4.0pt);
\draw [color=red] (14.0,2.1)-- ++(-2pt,-2pt) -- ++(4.0pt,4.0pt) ++(-4.0pt,0) -- ++(4.0pt,-4.0pt);
\draw [color=black] (13.4,3.6) circle (2pt);
\draw [color=black] (14.7,3.2) circle (2pt);
\draw [color=black] (14.3,3.8) circle (2pt);
\draw [color=black] (15.3,5.2) circle (2pt);
\draw [color=black] (14.6,4.8) circle (2pt);
\draw [color=black] (13.9,5.6) circle (2pt);
\draw [color=black] (14.5,5.7) circle (2pt);
\draw [fill=green] (11.5,4.9) circle (2pt);
\draw [fill=green] (10.5,4.9) circle (2pt);
\draw [fill=green] (12.5,6.0) circle (2pt);
\draw [color=red] (13.0,6.5)-- ++(-2pt,-2pt) -- ++(4.0pt,4.0pt) ++(-4.0pt,0) -- ++(4.0pt,-4.0pt);
\draw [color=red] (13.6,5.4)-- ++(-2pt,-2pt) -- ++(4.0pt,4.0pt) ++(-4.0pt,0) -- ++(4.0pt,-4.0pt);
\draw [color=black] (13.9,7.4) circle (2pt);
\draw [color=black] (14.5,7.6) circle (2pt);
\draw [color=black] (13.5,8.2) circle (2pt);
\draw [color=red] (12.9,7.8)-- ++(-2pt,-2pt) -- ++(4.0pt,4.0pt) ++(-4.0pt,0) -- ++(4.0pt,-4.0pt);
\draw [fill=green] (12.2,8.1) circle (2pt);
\draw [fill=green] (11.4,7.9) circle (2pt);
\draw [fill=green] (11.7,5.8) circle (2pt);
\draw [fill=green] (10.5,7.0) circle (2pt);
\draw [fill=green] (11.4,7.0) circle (2pt);
\draw [color=black] (12.4,9.6) circle (2pt);
\draw [color=black] (13.6,10.8) circle (2pt);
\draw [color=black] (12.4,11.3) circle (2pt);
\draw [color=black] (10.9,11.5) circle (2pt);
\draw [color=black] (11.9,11.7) circle (2pt);
\draw [fill=green] (11.0,10.2) circle (2pt);
\draw [color=red] (13.2,4.5)-- ++(-2pt,-2pt) -- ++(4.0pt,4.0pt) ++(-4.0pt,0) -- ++(4.0pt,-4.0pt);
\draw [color=red] (15.0,2.2)-- ++(-2pt,-2pt) -- ++(4.0pt,4.0pt) ++(-4.0pt,0) -- ++(4.0pt,-4.0pt);
\draw [color=red] (13.2,7.6)-- ++(-2pt,-2pt) -- ++(4.0pt,4.0pt) ++(-4.0pt,0) -- ++(4.0pt,-4.0pt);
\draw [color=red] (16.5,2.1)-- ++(-2pt,-2pt) -- ++(4.0pt,4.0pt) ++(-4.0pt,0) -- ++(4.0pt,-4.0pt);
\draw [fill=green] (16.5,0.9) circle (2pt);
\draw [color=red] (13.2,4.5) circle (2.5pt);
\draw [color=red] (15.0,2.2) circle (2.5pt);
\draw [color=red] (13.2,7.6) circle (2.5pt);
\draw [color=red] (16.5,2.1) circle (2.5pt);
\draw [color=red] (12.9,7.8) circle (2.5pt);
\draw [color=red] (13.0,6.5) circle (2.5pt);
\draw [color=red] (13.6,5.4) circle (2.5pt);
\draw [color=red] (13.1,2.6) circle (2.5pt);
\draw [color=red] (14.0,2.1) circle (2.5pt);
\draw [color=red] (13.5,7.0) circle (2.5pt);
\draw [color=red] (14.0,6.5) circle (2.5pt);
\draw [color=red] (14.0,5.0) circle (2.5pt);
\draw [color=red] (14.0,3.0) circle (2.5pt);
\end{tikzpicture}
\caption{Vertices counted in $\psi(n)$ -- Vertices counted in $R(n)$. }
\label{f:Rn}
\end{figure}
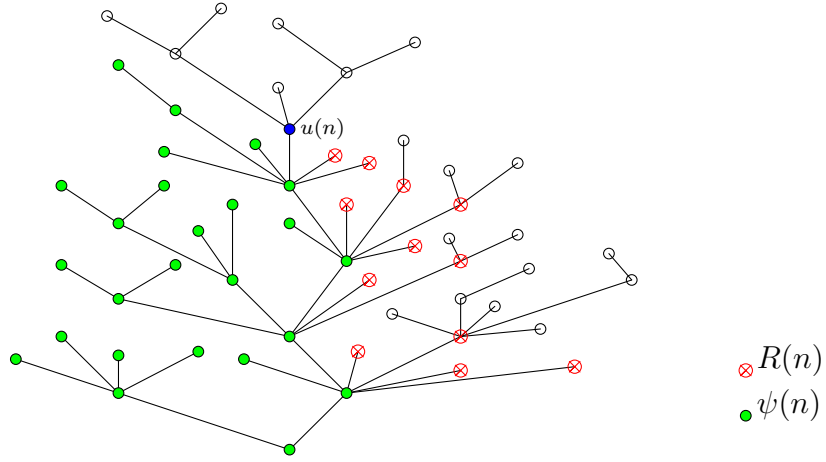

\noindent We want to show that for all $\eps >0$, 
\begin{equation}\label{eq:hortoshow}
\P\Big( R(n)>\eps n \Big)\sto{n\to\infty} 0. 
\end{equation}
Indeed, suppose~(\ref{eq:hortoshow}) is proved, then applying the weak law of large numbers to $\sum_{k=0}^{n-1}\nu(u^1(k))$ yields 
\begin{equation*}
\frac{\psi(n)}{n}=\frac{\sum_{k=0}^{n-1}\nu(u^1(k))}{n}-\frac{R(n)}{n}\substack{\P \\ \longrightarrow \\ n\to\infty} m, 
\end{equation*}
and noticing that for all $n\in\N$, $\psi(\varphi(n))=n$, this would imply (\ref{eq:cvm}), which would conclude the proof as explained previously. \\ First of all, we have obviously 
\begin{equation*}
R(n) ≤ \sum_{k=0}^{n-1} \left(\nu(u^1(k))\1{\#\{ u\in\F \st \parent{u}=u^1(k), u^1(n)\prec u \}\neq0}\right). 
\end{equation*}
However, for all $k\in\N$, it is necessary that $u^1(k)\vdash u^1(n)$ for \mbox{$\{ u\in\F \st \parent{u}=u^1(k), u^1(n)\prec u \}$} not to be empty. Thus, 
\begin{equation*}
R(n)≤\sum_{u\vdash u^1(n)} \nu(u),  
\end{equation*}
and therefore
\begin{equation*}
\P\Big( R(n)>\eps n \Big)≤\P\Big(\sum_{u\vdash u^1(n)} \nu(u)>\eps n\Big). 
\end{equation*}
Now, notice that since $\E\Big[ (\nu^1)^2 \Big]<\infty$ (according to $\bf (H_c^2)$), there exists a sequence $(c_n)_{n≥1}$ going to infinity such that
\begin{equation*}
c_n\sto[=]{n\to\infty} o(\sqrt{n})\textrm{ and }\P\Big( \nu^1>c_n \Big)\sto[=]{n\to\infty} o(\frac{1}{n}). 
\end{equation*}
Then, by the union bound, 
\begin{equation*}
\P\Big( \exists i<n \st \nu^1(u^1(i))>c_n \Big)≤ n \P\Big( \nu^1>c_n \Big) \;\sto{n\to\infty}\;0,
\end{equation*}
and therefore we have for all $\eps'>0$, for $n$ large enough,
\begin{equation*}
\P\Big( R(n)>\eps n \Big)≤\eps'+\P\Big(\sum_{u\vdash u^1(n)} \nu(u)>\eps n,\:\max_{i<n}\nu^1(u(i))<c_n\Big).
\end{equation*}
Moreover according to our estimate on Galton--Watson forests in Lemma~\ref{lemma:estimatesGW}, we notice that for all $\eps'>0$, for $M$ and $n$ large enough, 
\begin{align}\label{eq:beginninghorizontal}
\P\Big( R(n)>\eps n \Big) &≤ 2\eps' + \P\Big( \sum_{u\vdash u^1(n)} \nu(u)>\eps n,\:\max_{i<n}\nu^1(u^1(i))<c_n,\:|u^1(n)|≤ M\sqrt{n} \Big) \nonumber \\ 
&≤2\eps' + \frac{1}{\eps n}\underbrace{\E\Big[  \Big(\sum_{u\vdash u^1(n)}\nu(u)\Big)\1{\max_{i<n}\nu^1(u^1(i))<c_n,\:|u^1(n)|≤ M\sqrt{n}} \Big]}_{:=A_n}, 
\end{align}
and so it is sufficient to show that the expectation denoted by $A_n$ is $o(n)$ to get~(\ref{eq:hortoshow}). \\ 
To this end,  let us set $S_0:=0$ and for all $k≥1$, $S_k:=\sum_{i=0}^{k-1} (\nu^1(u^1(i))-1)$ . The sequence $(S_k)_{k≥0}$ is the \textit{Lukasiewicz path} of $\F^1$, a centred random walk, see Part 1.1 of \cite{le-gall-rt}. Then, as explained in the proof of Corollary~2.2 of \cite{le-gall-le-jan}, we have that for all $k<\N$,
\begin{equation*}
u^1(k)\vdash u^1(n) \Longleftrightarrow S_k=\min_{k≤l≤n} S_l. 
\end{equation*} 
Hence, we can write
\begin{align*}
A_n&=\E\Big[  \Big(\sum_{u\vdash u^1(n)} \nu(u)\Big)\1{\max_{0≤i<n}\nu^1(u^1(i))<c_n,\:|u^1(n)|≤ M\sqrt{n}} \Big]\\
&=\E\Big[  \Big(\sum_{k=0}^{n-1} \nu(u^1(k))\1{S_k=\min_{k≤l≤n} S_l}\Big)\1{\max_{0≤i<n}\nu^1(u^1(i))<c_n,\:\#\{ 0≤i<n \st S_i=\min_{i≤j≤n} S_j \}≤M\sqrt{n}} \Big].  
\end{align*}
Now we let $(\hat{S}^n_k)_{0≤ k ≤ n}=(S_n-S_{n-k})_{0≤ k ≤ n}$ be the time-reverse from time $n$ version of $(S_k)_{0≤k≤n}$. Re-indexing the sum from $n-1$ to $0$ and using the fact $S_{n-k}=\min_{n-k≤l≤n} S_l$ if and only if $\hat{S}_k=\max_{0≤l≤k} \hat{S}_l$ yields
\begin{align*}
A_n&=\E\Big[  \Big(\sum_{k=1}^{n} \nu(u^1(n-k))\1{\hat{S}_k=\max_{0≤l≤k} \hat{S}_l}\Big)\1{\max_{0<i≤n}\nu^1(u^1(n-i))<c_n,\:\#\{ 0<i≤ n\st \hat{S}_i=\max_{0≤j≤i} \hat{S}_j \}≤M\sqrt{n}} \Big] \\
&=\E\Big[  \Big(\sum_{k=1}^{n} \nu(u^1(k))\1{S_k=\max_{0≤l≤k} S_l}\Big)\1{\max_{0<i≤n}\nu^1(u^1(i))<c_n,\:\#\{ 0≤i<n \st S_i=\max_{0≤j≤i} S_j \}≤M\sqrt{n}} \Big]. 
\end{align*}
where in the last equality we used the fact that $\Big((\hat{S}^n_k)_{0≤ k ≤ n}, (\nu(u^1(n-k)))_{0≤k≤n}\Big)$ has the same law than $\Big((S_k)_{0≤k≤n},(\nu(u^1(k)))_{0≤k≤n}\Big)$. Let 
\begin{equation*}
\tau_1=\inf \{k≥1 \st S_k>0 \}\textrm{ and }\forall i\in\N,\: \tau_{i+1}=\inf \{k>\tau_i \st S_k>\max_{l<k} S_l \} 
\end{equation*}
be the stopping times at which record high are achieved, we have 
\begin{align*}
A_n&=\E\Big[  \Big(\sum_{k≥1} \nu(u^1(\tau_k))\1{\tau_k≤n}\Big)\1{\max_{i≤n}\nu^1(u^1(i))<c_n,\tau_{\cl{M\sqrt{n}}}≥n} \Big] \\
&≤ \sum_{k=1}^{\fl{M\sqrt{n}}} \E\Big[  \nu(u^1(\tau_k))\1{\nu^1(u^1(\tau_k))<c_n} \Big]. 
\end{align*}
Applying Markov's strong property to stopping times $\tau_1,\ldots,\tau_{\fl{M\sqrt{n}}}$, we obtain
\begin{equation*}
A_n ≤M\sqrt{n}\E\Big[ \nu(u^1(\tau_1))\1{\nu^1(u^1(\tau_1))<c_n}\Big]. 
\end{equation*}
Let us estimate $\E\Big[ \nu(u^1(\tau_1))\1{\nu^1(u^1(\tau_1))<c_n}\Big]$~: 

\begin{align*}
\E\Big[ \nu(u^1(\tau_1))\1{\nu^1(u^1(\tau_1))<c_n}\Big]&=\E\Big[\sum_{k≥1} \nu(u^1(k))\1{\forall 0≤i≤k-1,\, S_i≤0,\, S_{k-1}+\nu^1(u^1(k))-1>0}\1{\nu^1(u^1(k))<c_n}\Big] \\
&≤\E\Big[\sum_{k≥1} \nu(u^1(k))\1{\forall 0≤i≤k-1,\, S_i≤0,\, S_{k-1}+c_n-1>0}\Big] \\
&≤\E\Big[ \nu \Big]\sum_{k≥1}\E\Big[\1{\forall 0≤i≤k-1,\, S_i≤0,\, S_{k-1}+c_n-1>0}\Big] \\
&=m\E\Big[ \sum_{k=0}^{\tau_1-1} \1{S_k>-c_n+1} \Big]
\end{align*}
Proceeding as in Section~2 of~\cite{biggins}, we have
\begin{equation*}
\E\Big[ \sum_{k=0}^{\tau_1-1} \1{S_k>-c_n+1} \Big]=\int_{0}^{c_n-1}U^{-}(\d x), 
\end{equation*}
where $U^{-}$ is the renewal measure corresponding to the weak descending ladders heights of $(S_n)_{n≥0}$. The renewal theorem (see p.\ 360 in~\cite{feller}) ensures us that there exists a constant $c'>0$ such that
\begin{equation*}
\int_{0}^{c_n-1}U^{-}(\d x)<c'(1+c_n-1). 
\end{equation*}
Hence, 
\begin{align*}
A_n≤(M\sqrt{n})m(c'c_n)\sto[=]{n\to\infty} o(n), 
\end{align*}
which is what we wanted in equation~(\ref{eq:beginninghorizontal}).  
\end{proof}

\subsection{Conclusion of the proof of Theorem~\ref{th:bitype}}\label{s:conclusion1}
To conclude the proof of Theorem~\ref{th:bitype} (i), we just have to use the convergence of $s\mapsto H^1(\fl{ns})/n^{1/2}$ together with Propositions~\ref{prop:vertical} and~\ref{prop:horizontal} to get the convergence of $s\mapsto H^{\ell}(\fl{ns})/n^{1/2}$. \\
\medskip 

\noindent {\it Proof of Theorem~\ref{th:bitype} (i).} Recall that the forest $\F^1$ is a non-trivial critical Galton--Watson forest with finite variance. Then, 
\begin{equation}\label{eq:cvsimple}
\Big(n^{-1/2}H^1(\fl{ns}) \Big)_{s≥0}\sto[\Longrightarrow]{n\to\infty}\left(\frac{2}{\sigma}B_s\right)_{s≥0}
\end{equation}
for the Skorokhod topology on the space $\mathbb{D}(\R_+,\R)$ (this is Theorems~2.3.1 and~2.3.2 of~\cite{duquesne-le-gall} for example). Now, composing $s\mapsto H^1(\fl{ns})$ with $s\mapsto \varphi(\fl{ns})/n$, Proposition~\ref{prop:horizontal} ensures that 
\begin{equation*}
\Big(n^{-1/2}\mu H^1(\varphi(\fl{ns})\Big))_{s≥0} \sto[\Longrightarrow]{n\to\infty} \left(\frac{2\mu}{\sigma}B_{m^{-1}s}\right)_{s≥0}
\end{equation*} 
for the Skorokhod topology on $\mathbb{D}(\R_+,\R)$, a convergence that holds jointly with that of~(\ref{eq:cvsimple}). As explained in Section~2.6 of~\cite{miermont}, this can be seen as follows : since $(\varphi(\fl{ns})/n)_{s≥0}$ converges towards a deterministic process, the couple $\left((\varphi(\fl{ns})/n)_{s≥0}, (n^{-1/2}H^1(\fl{ns}))_{s≥0}\right)$ converges in law. Now Skorokhod representation theorem ensures that there exists a probability space where this convergence holds almost surely, and therefore where both convergences of $s\mapsto n^{-1/2}H^1(\fl{ns})$ and of $s\mapsto \varphi(\fl{ns})/n$ hold almost surely. In such a space, the convergence of their composition will hold almost surely, and therefore will hold in distribution.
Finally, Proposition~\ref{prop:vertical} yields 
\begin{equation*}
\Big(\Big|\frac{H^{\ell}(\fl{ns})}{n^{1/2}} - \frac{\mu H^1(\varphi(\fl{ns})}{n^{1/2}}\Big|\Big)_{s≥0}\substack{ \P \\ \longrightarrow \\ n\to\infty} 0,
\end{equation*} for the topology of the convergence over compact sets, thus completing the proof of the theorem. $\hfill □$
\bigskip

\noindent {\it Proof of (ii) and (iii).} (ii) The proof is similar to that of Theorem 1 (ii) of~\cite{miermont}. Denote by $\Gamma_n^1$ the index of the tree in $\F^1$ to which $u^1(n)$ belongs. The definition of $\varphi$ allows us to write for all $n\in\N$, $s≥0$, $\Gamma_{\fl{ns}}=\Gamma^1_{\fl{\varphi(ns)}}$. Proposition~\ref{prop:horizontal} and then Corollary~2.5.1 of~\cite{duquesne-le-gall} applied to $\Gamma^1$ (as $\F^1$ is a monotype Galton--Watson forest) allow us to conclude the proof. \\
(iii) The proof of Corollary~1 of~\cite{miermont} can be applied here, using Theorem~\ref{th:bitype}~(i) and~(ii). $\hfill \square$

\section{Proof of Theorem~\ref{th:multitype} }\label{s:proof2}
\subsection{Change of measure on the multitype Galton--Watson tree}

Let us introduce here the multitype version of what was introduced in Subsection~\ref{subsec:measurechanget1}. Let $(W_n)_{n\in\N}$ be the {\it multitype additive martingale}, where for all $n\in\N$, 
\begin{equation*}
W_n := \sum_{|u|=n}b_{\ty(u)}.  
\end{equation*}
For all $n\in\N$, we let $\cal{F}_n$ be the sigma-algebra generated by the $(u,\ty(u))$ for $u\in\Tb$, $|u|≤n$. Then for all $x_0\in\X$, $(\frac{W_n}{b_{x_0}})$ is a $\P_{x_0}$-martingale for the filtration $(\cal{F}_n)_{n\in\N}$. Indeed, for all $n\in\N$, $W_n$ is obviously $\cal{F}_n$-measurable, and has a finite first moment as $(b_x)_{x\in\X}$ is an ${\bf M}$-right eigenvector. Moreover, 
\begin{align*}
\E_{x_0}\Big[ W_{n+1}\, |\,\mathcal{F}_n \Big]&=\E_{x_0}\Big[ \sum_{|u|=n}\sum_{\parent{v}=u} b_{\ty(v)}\, |\,\mathcal{F}_n \Big]\\
&=\sum_{|u|=n} \E_{\ty(u)}\Big[ \sum_{|v|=1} b_{\ty(v)}\Big]\\
&=\sum_{|u|=n} \sum_{y\in\X}m_{\ty(u),y}b_y\\
&=\sum_{|u|=n}b_{\ty(u)}=W_n, 
\end{align*}
where we used the branching property between lines 2 and 3, and then the fact that $(b_x)_{x\in\X}$ is an ${\bf M}$-right eigenvector. Finally, notice that
\begin{equation*}
\E_{x_0}\Big[ W_0 \Big]=\E_{x_0}\Big[ b_{\ty(\root)} \Big]=b_{x_0}. 
\end{equation*}
Let us introduce a new law $\Ph_{x_0}^*$ on marked trees $(\Tb,\ty)$ with a distinguished path $(w_n)_{n≥0}$ where for any $n≥0$, $w_n$ is at generation $n$. Let $\widehat{\bzeta}=(\widehat{\zeta}_x)_{x\in\X}$ be the probability law of Radon-Nikodym derivative $\sum_{u\in\Tb,|u|=1} b_{\ty(u)}$ with respect to $\bzeta$. More precisely, for any $x\in\X$, if $X\sim \zeta_x$, then $\hat{X}\sim \hzeta_x$ if and only if for any function bounded real-valued function $f$ on $\X^{(\N)}$, 
\begin{equation*}
\E[f(\hat{X})]=\E[|X|f(X)], 
\end{equation*}
where we recall that $|X|$ stands for the length of $X$. We construct $(\Tb,\ty,(w_n)_{n≥0})$ under $\Ph^*_{x_0}$ by induction as follows~: 

\begin{itemize}
\item {\bf Initialisation} \\
Generation $0$ of $\Tb$ is only made up of the root $\root$ of given type $\ty(\root)=x_0$. We set $w_0=\root$. 
\item {\bf Induction} Let $n≥0$. Suppose that the tree and the spine have been built up to generation $n$. The vertex $w_n$ has progeny according to $\hzeta_{\ty(w_n)}$. Other vertices $u$ of generation $n$ have progeny according to $\zeta_{\ty(u)}$. Then, choose a vertex at random among children $u$ of $w_{n}$, each with probability $b_{\ty(u)}/\Big(\sum_{\parent{v}=w_{n}} b_{\ty(v)}\Big)$ and set $w_{n+1}$ as this vertex. 
\end{itemize}

\noindent We denote by $\Ph_{x_0}$ the marginal law of $(\Tb,\ty)$ under this construction, and $\Eh_{x_0}$ the associated expectation. Just as in Subsection~\ref{subsec:measurechanget1}, the following proposition, which is easily deduced from \cite{kurtz-lyons-permantle-peres-multitype}, links $\P_{x_0}$ and $\Ph_{x_0}$~: 

\begin{proposition} {\bf~\cite{kurtz-lyons-permantle-peres-multitype}}\label{prop:markovtype}
\begin{itemize}
\item[\rm (i)] Recall that for any $n≥0$, $\cal{F}_n$ stands for the sigma-algebra generated by the $(u,\ty(u))$ for $u\in\Tb, |u|≤n$. Then $\Ph_{x_0|\cal{F}_n}$ is absolutely continuous with respect to $\P_{x_0|\cal{F}_n}$ and is such that
\begin{equation*}
 \frac{\d \Ph_{x_0}}{\d\P_{x_0}}|_{\cal{F}_n}=\frac{1}{b_{x_0}}W_n. 
\end{equation*}
\item[\rm (ii)] Recall that $\cal{F}_n$ bears no information on $(w_n)_{n≥0}$. Conditionally on $\cal{F}_n$, for all $u\in\Tb$ such that $|u|=n$, 
\begin{equation*}
\Ph^*_{x_0}\Big( w_n=u \, |\, \mathcal{F}_n \Big)=\frac{b_{\ty(u)}}{W_n}
\end{equation*}
\item[\rm (iii)] Under $\Ph^*_{x_0}$, the process $(\phi_k)_{k\in\N}:=(\ty(w_k))_{k\in\N}$ is a Markov chain taking its values in $\X$ with initial state $x_0$, and with transition probabilities denoted by $(p_{x,y})_{x,y\in\X}$, where for all $x,y\in\X$, $p_{x,y}=\frac{b_y}{b_x}m_{x,y}$.  
\end{itemize}\end{proposition}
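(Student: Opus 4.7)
The plan is to prove (i) and (ii) jointly by induction on $n$, and then to deduce (iii) from (ii) and the reproduction rule. The base case $n=0$ is immediate: $W_0=b_{x_0}$ makes the density $1$, and $w_0=\root$ by construction gives (ii).

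For the inductive step I would analyse one step of the construction of $\Ph^*_{x_0}$ by separating its two sources of randomness, namely the reproduction at generation $n+1$ (with the spine vertex $w_n$ reproducing under $\hzeta_{\ty(w_n)}$ and the others independently under $\zeta_{\ty(\cdot)}$) and the subsequent selection of $w_{n+1}$ among the children of $w_n$ with weights proportional to $b_{\ty(\cdot)}$. The density of $\hzeta_x$ with respect to $\zeta_x$, evaluated at the progeny of a vertex $u$ of type $x$, equals $b_{\ty(u)}^{-1}\sum_{\parent{v}=u}b_{\ty(v)}$, which is a probability density thanks to the eigenvector identity $\sum_y m_{x,y}b_y=b_x$. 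Inserting this density into a $\cal{F}_{n+1}$-measurable test $F$, averaging over $w_n$ via the inductive form of (ii), and then converting $\Eh_{x_0}$ to $\E_{x_0}$ via the inductive form of (i) produces
$$\Eh^*_{x_0}[F]\;=\;\frac{1}{b_{x_0}}\,\E_{x_0}\!\Bigl[\sum_{|u|=n}\sum_{\parent{v}=u}b_{\ty(v)}\,F\Bigr]\;=\;\frac{1}{b_{x_0}}\,\E_{x_0}[W_{n+1}F],$$
which is (i) at level $n+1$. A parallel Bayes-type computation using the same ingredients gives the auxiliary formula
$$\Ph^*_{x_0}(w_n=w\mid\cal{F}_{n+1})\;=\;\frac{\sum_{\parent{v}=w}b_{\ty(v)}}{W_{n+1}}\,\mathbf{1}_{\{w\in\Tb,\,|w|=n\}},$$
and combining this with the selection rule $\Ph^*_{x_0}(w_{n+1}=u_0\mid\cal{F}_{n+1},w_n)=b_{\ty(u_0)}\mathbf{1}_{\parent{u_0}=w_n}/\sum_{\parent{v}=w_n}b_{\ty(v)}$ causes the factor $\sum_{\parent{v}=w_n}b_{\ty(v)}$ to telescope, leaving $b_{\ty(u_0)}/W_{n+1}$, which is (ii) at level $n+1$.

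For (iii), the Markov property is built into the construction, since the conditional law of $\ty(w_{n+1})$ given the entire past depends only on $\ty(w_n)$ through the choice of $\hzeta_{\ty(w_n)}$ and the size-biased selection. To identify the kernel I would integrate the selection probability against $\hzeta_x$: if $Z\sim\hzeta_x$ lists the types of the children of $w_n$, then
$$\Ph^*_{x_0}\!\bigl(\ty(w_{n+1})=y\bigm|\ty(w_n)=x\bigr)\;=\;\int\frac{b_y\,\#\{i:z_i=y\}}{\sum_j b_{z_j}}\,\hzeta_x(\d z),$$
and the rewriting $\hzeta_x(\d z)=b_x^{-1}\bigl(\sum_j b_{z_j}\bigr)\zeta_x(\d z)$ cancels the denominator to give $(b_y/b_x)\int\#\{i:z_i=y\}\,\zeta_x(\d z)=(b_y/b_x)m_{x,y}=p_{x,y}$.

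The main bookkeeping obstacle is handling cleanly the two independent layers of randomness in $\Ph^*_{x_0}$ — the size-biased reproduction at the spine and the weighted choice of the next spine vertex — so that the $b_{\ty(\cdot)}$ weights introduced by $\hzeta$ cancel exactly against those appearing in the spine-selection rule at each step. Once this is arranged, the entire argument reduces to algebraic identities powered by the right-eigenvector equation $\sum_y m_{x,y}b_y=b_x$.
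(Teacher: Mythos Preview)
Your proposal is correct and follows the standard Lyons--Pemantle--Peres spinal-decomposition argument. Note, however, that the paper does \emph{not} supply its own proof of this proposition: it simply cites \cite{kurtz-lyons-permantle-peres-multitype} and states that the result ``is easily deduced'' from that reference. So there is no paper-side argument to compare against; what you have written is essentially a clean write-up of the cited proof, with the inductive mechanism (size-biased reproduction at the spine, weighted child selection, and the telescoping of the $b_{\ty(\cdot)}$ factors via the right-eigenvector relation $\sum_y m_{x,y}b_y=b_x$) spelled out explicitly.

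One small remark on the paper's text rather than on your proof: the paper's displayed description of $\hzeta_x$ via ``$\E[f(\hat X)]=\E[|X|f(X)]$'' appears to be a slip carried over from the monotype case; the intended density is $b_x^{-1}\sum_{|u|=1} b_{\ty(u)}$, which is exactly what you use and what makes the normalisation work. Your formulation is the correct one.
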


\noindent Just as in Section~\ref{s:proof1}, as there should be no ambiguity on it, we will indifferently denote $\Ph_{x_0}$ or $\Ph^*_{x_0}$ by $\Ph_{x_0}$, and $\Eh_{x_0}$ their associated expectation. Notice that the Markov chain $(\phi_k)_{k\in\N}$ introduced in (iii) admits an invariant measure $(\pi_x)_{x\in\X}$ where for all $x\in\X$, 
\begin{equation*}
\pi_x=a_x b_x, 
\end{equation*}
and that under $\bf (H_M)$ this measure is finite, thus ensuring that $(\phi_k)_{k\in\N}$ is positive recurrent. Moreover, hypothesis $\bf (H_M)$ implies its irreducibility.  Proposition~\ref{prop:markovtype} yields the \textit{multitype many-to-one lemma} : 
\begin{lemma}\label{lemma:many-to-one-multitype}
For all $n \in \Ns$, $g:\X^n \to \R_+$ a measurable function, $X_n$ a $\cal{F}_n$-measurable random function, 
\begin{equation*}
\E_{x_0}\Big[\sum_{|u|=n}g(\ty(u_1),\ty(u_2),\ldots,\ty(u_n)) X_n \Big]=b_{x_0}\Eh_{x_0}\Big[\frac{1}{b_{\phi_n}}g(\phi_1,\phi_2,\ldots,\phi_n) X_n \Big]. 
\end{equation*}
\end{lemma}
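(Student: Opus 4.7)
The plan is to deduce the identity directly from parts (i) and (ii) of Proposition~\ref{prop:markovtype}, starting from the right-hand side and transforming it step by step into the left. Since under $\Ph^*_{x_0}$ the spine variable $\phi_n$ is by definition $\ty(w_n)$, the integrand $\tfrac{1}{b_{\phi_n}}g(\phi_1,\ldots,\phi_n)X_n$ is a function of $\cal{F}_n$ together with the spine endpoint $w_n$. I would therefore first condition on $\cal{F}_n$, pull out the $\cal{F}_n$-measurable factor $X_n$, and write
\[
\Eh_{x_0}\Big[\tfrac{1}{b_{\phi_n}}g(\phi_1,\ldots,\phi_n)X_n\Big]
=\Eh_{x_0}\Big[X_n\,\Eh_{x_0}\Big[\tfrac{1}{b_{\ty(w_n)}}g(\ty(w_1),\ldots,\ty(w_n))\,\big|\,\cal{F}_n\Big]\Big].
\]

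Next, I would evaluate the inner conditional expectation using Proposition~\ref{prop:markovtype}(ii): conditionally on $\cal{F}_n$, the spine endpoint $w_n$ ranges over the vertices $u$ at generation $n$ with probability $b_{\ty(u)}/W_n$. The factor $b_{\ty(u)}$ from this conditional law cancels the $1/b_{\ty(w_n)}$ in the integrand, so the inner expectation collapses to $W_n^{-1}\sum_{|u|=n}g(\ty(u_1),\ldots,\ty(u_n))$. At this point the right-hand side of the lemma has become
\[
b_{x_0}\,\Eh_{x_0}\!\left[\,\frac{X_n}{W_n}\sum_{|u|=n}g(\ty(u_1),\ldots,\ty(u_n))\right],
\]
with the entire quantity inside the expectation now $\cal{F}_n$-measurable.

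Finally, I would apply Proposition~\ref{prop:markovtype}(i) to switch from $\Eh_{x_0}$ back to $\E_{x_0}$ on $\cal{F}_n$ via the density $W_n/b_{x_0}$. The $W_n$ produced by the Radon--Nikodym derivative exactly cancels the $W_n^{-1}$ already present, and the outer $b_{x_0}$ cancels the $1/b_{x_0}$, leaving precisely $\E_{x_0}\big[X_n\sum_{|u|=n}g(\ty(u_1),\ldots,\ty(u_n))\big]$, which is the left-hand side. I do not expect a real obstacle: the statement is essentially an algebraic rearrangement wrapped around Proposition~\ref{prop:markovtype}, with the only mild care needed being the justification of interchanging sum and expectation, which is immediate from the non-negativity of $g$ by Tonelli. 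An alternative route would be induction on $n$, as suggested for Lemma~\ref{lemma:many-to-one}, using the branching property at the first generation combined with the identity $\sum_{y\in\X}p_{x,y}\cdot b_x/b_y\cdot (\cdot)=\sum_{y}m_{x,y}(\cdot)$, but the direct change-of-measure argument above seems shorter and more transparent.
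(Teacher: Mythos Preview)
Your argument is correct and is precisely the derivation the paper has in mind: the text simply states that Proposition~\ref{prop:markovtype} ``yields'' the lemma without spelling out the steps, and what you have written is the natural unpacking of that claim via parts (i) and (ii). The only point worth making explicit is that knowing $w_n$ together with $\cal{F}_n$ determines the whole ancestral path $(w_1,\ldots,w_n)$, so that Proposition~\ref{prop:markovtype}(ii) indeed suffices to compute the inner conditional expectation; you use this implicitly and it is fine.
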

This lemma will be of great use, since thanks to it the study of certain quantities of the multitype Galton--Watson tree can be reduced to that of a simple Markov chain. Let us now prove Proposition~\ref{prop:hypotheses} introduced in Subsection~\ref{s:reduction}.

\subsection{Proof of Proposition~\ref{prop:hypotheses} : Hypothesis $\bf (H_1)$ }\label{s:proofh1}

For all $y\in\X$ and $u\in\Fb$ of $\Tb$, we set :  
\begin{equation*}
\Ni^y_u:=\#\B_u^y, 
\end{equation*}
that is $\Ni_u^y$ is the number of vertices "between" $u$ and $\cal{L}_u^y$, $\cal{L}_u^y$ included. If $u=\root$, we will simply write $\Ni^y$. We want to prove that $\F$ satisfies hypothesis $\bf (H_1)$, which in our case boils down to prove the following proposition~:  

\begin{proposition}\label{prop:EN}
For any $x_0\in\X$, the random variable $\Ni^{x_0}$ has a finite first moment under $\P_x$~; more precisely~: 
\begin{equation*}
\E_{x_0}\Big[ \Ni^{x_0} \Big]=\frac{1}{a_{x_0}}.
\end{equation*}
\end{proposition}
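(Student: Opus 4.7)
The plan is to reduce the computation to a first-return-time calculation for the spinal Markov chain $(\phi_k)_{k\in\N}$ introduced in Proposition~\ref{prop:markovtype}~(iii), using the multitype many-to-one lemma.

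First I would decompose $B^{x_0}$ generation by generation. Since a vertex $v$ with $|v|=n\geq 1$ lies in $\mathcal{B}^{x_0}_{\root}$ exactly when none of its ancestors $v_1,\ldots,v_{n-1}$ has type $x_0$, we can write
\begin{equation*}
B^{x_0} \;=\; \sum_{n\geq 1}\sum_{|u|=n}\1{\ty(u_1),\ldots,\ty(u_{n-1})\neq x_0}.
\end{equation*}
Applying Lemma~\ref{lemma:many-to-one-multitype} with $g(\phi_1,\ldots,\phi_n)=\1{\phi_1,\ldots,\phi_{n-1}\neq x_0}$ gives, for each $n\geq 1$,
\begin{equation*}
\E_{x_0}\Big[\sum_{|u|=n}\1{\ty(u_1),\ldots,\ty(u_{n-1})\neq x_0}\Big]
=b_{x_0}\,\Eh_{x_0}\Big[\frac{1}{b_{\phi_n}}\1{\phi_1,\ldots,\phi_{n-1}\neq x_0}\Big].
\end{equation*}
Summing over $n$ and introducing the first return time $\tau:=\inf\{n\geq 1\st \phi_n=x_0\}$, the indicator becomes $\1{n\leq \tau}$ and Fubini (all terms being non-negative) turns the right-hand side into
\begin{equation*}
\E_{x_0}[B^{x_0}]\;=\;b_{x_0}\,\Eh_{x_0}\Big[\sum_{n=1}^{\tau}\frac{1}{b_{\phi_n}}\Big].
\end{equation*}

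Next, I would invoke the standard occupation-measure characterisation of the stationary distribution. Under $\bf (H_M)$ the chain $(\phi_k)$ is irreducible, and its invariant measure $\pi_x=a_x b_x$ is finite and normalised so that $\sum_x \pi_x=1$; in particular $(\phi_k)$ is positive recurrent. The classical identity then reads, for any non-negative $f:\X\to\R_+$,
\begin{equation*}
\Eh_{x_0}\Big[\sum_{n=0}^{\tau-1} f(\phi_n)\Big]=\frac{1}{\pi_{x_0}}\sum_{y\in\X}f(y)\pi_y.
\end{equation*}
Since $\phi_0=\phi_\tau=x_0$, shifting the index shows $\sum_{n=1}^{\tau} 1/b_{\phi_n}=\sum_{n=0}^{\tau-1}1/b_{\phi_n}$. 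Taking $f(y)=1/b_y$, we obtain
\begin{equation*}
\Eh_{x_0}\Big[\sum_{n=1}^{\tau}\frac{1}{b_{\phi_n}}\Big]=\frac{1}{a_{x_0}b_{x_0}}\sum_{y\in\X}\frac{a_y b_y}{b_y}=\frac{1}{a_{x_0}b_{x_0}},
\end{equation*}
using $\sum_y a_y=1$. Plugging this back yields $\E_{x_0}[B^{x_0}]=1/a_{x_0}$.

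The only delicate point is justifying the occupation-measure formula in our countable-state setting; this is where $\bf (H_M)$ is essential (irreducibility and $\sum a_y b_y<\infty$ together ensure $\pi$ is a finite invariant probability measure and positive recurrence of $\phi$). Everything else is a straightforward application of the many-to-one lemma combined with an elementary bijection between descendants of the root with no $x_0$-ancestor and excursions of $\phi$ of length at most $\tau$.
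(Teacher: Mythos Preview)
Your proof is correct and follows essentially the same route as the paper: decompose $B^{x_0}$ by generation, apply the multitype many-to-one lemma to reduce the expectation to $b_{x_0}\Eh_{x_0}\big[\sum_{k=1}^{\tau}1/b_{\phi_k}\big]$, and then invoke the occupation-measure identity for the positive recurrent chain $(\phi_k)$ with invariant probability $\pi_x=a_xb_x$. The only cosmetic difference is that the paper states the excursion formula directly as $\Eh_{x_0}\big[\sum_{k=1}^{\tauh_{x_0}}\1{\phi_k=z}\big]=\pi_z/\pi_{x_0}$, whereas you make the index shift $\sum_{n=1}^{\tau}=\sum_{n=0}^{\tau-1}$ explicit before applying it.
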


\begin{proof}
For any $y\in\X$, we denote by 
\begin{equation}\label{deftauj}
\tauh_y := \inf\{ k≥1 \st \phi_k=y \}
\end{equation}
the first non-null hitting time of state $y$ by $(\phi_k)_{k≥1}$. Let us show that $\Ni^y$ admits a finite first moment for any $y\in\X$, whatever the type of $\root$~; let $x,y\in\X$, the many-to-one lemma (Lemma~\ref{lemma:many-to-one-multitype}) yields
\begin{align*}
\E_x\Big[ \Ni^y \Big]=& \E_x\Big[ \sum_{u\in\Tb\backslash\{\root\}} \1{\ty(u_1),\ty(u_2),\ldots,\ty(\parent{u})\neq y} \Big] \\
=& \sum_{k≥1}\E_x\Big[ \sum_{|u|=k} \1{\ty(u_1),\ldots,\ty(u_{k-1})\neq y} \Big]\\
=&b_x \sum_{k≥1}\Eh_x\Big[ \frac{1}{b_{\phi_k}} \1{\phi_1,\ldots,\phi_{k-1}\neq y} \Big]=b_x \Eh_x\Big[ \sum_{k=1}^{\tauh_y} \frac{1}{b_{\phi_k}} \Big],
\end{align*}
which yields in the case where $x=y=x_0$, 
\begin{equation*}\E_{x_0}\Big[ \Ni^{x_0} \Big] = b_{x_0} \Eh_{x_0}\Big[ \sum_{k=1}^{\tauh_{x_0}} \frac{1}{b_{\phi_k}} \Big]= b_{x_0} \sum_{z\in\X}\frac{1}{b_z} \frac{\pi_z}{\pi_{x_0}} = \sum_{z\in\X} \frac{a_z}{a_{x_0}}=\frac{1}{a_{x_0}}, 
\end{equation*}
which concludes the proof. In the second equality, we used a classic result on the mean time spent in a given state during a Markovian excursion (we recall that $(\pi_z)_{z\in\X}$ is the invariant measure of $(\phi_k)_{k≥0}$). Then we used the fact that for all $z\in\X$, $\pi_z=a_z b_z$, and then that $\sum_{z\in\X}a_z=1$. 
\end{proof}

\subsection{Proof of Proposition~\ref{prop:hypotheses} : Hypotheses $\bf (H_c)$ and $\bf (H_c^2)$ }\label{s:proofhc}

For all $y\in\X$ and $u\in\Fb$, we let  
\begin{equation*}
\Zi_u^y := \#\cal{L}_u^y
\end{equation*}
be the number of vertices forming $\L_u^y$. If $u=\root$, we will simply write $\Zi^y$. To prove that $\F$ satisfies hypotheses $\bf (H_c)$ and $\bf (H_c^2)$, we just need to show the following proposition. 

\begin{proposition}\label{prop:EZ}
Under $\P_{x_0}$, $\Zi^{x_0}$ has a finite second moment~; more precisely~: 
\begin{equation}\label{valeurs_Z}
\E_{x_0}\Big[ \Zi^{x_0} \Big]= 1 \qquad\textrm{and}\qquad {\bf Var}_{x_0}(\Zi^{x_0}) = \E_{x_0}\Big[ (\Zi^{x_0})^2 \Big]- \E_{x_0}\Big[ \Zi^{x_0} \Big]^2=\frac{\eta^2}{a_{x_0} b_{x_0}^2}. 
\end{equation}
\end{proposition}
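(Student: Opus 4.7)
For the first moment, my plan is to repeat the argument from the proof of Proposition~\ref{prop:EN}: decomposing $\Zi^{x_0}$ according to the generation of each element of $\cal{L}^{x_0}$ and applying the multitype many-to-one lemma (Lemma~\ref{lemma:many-to-one-multitype}) should yield
\[
\E_{x_0}\Big[\Zi^{x_0}\Big]=\sum_{k\geq 1}\E_{x_0}\Big[\sum_{|u|=k}\1{\ty(u_1),\ldots,\ty(u_{k-1})\neq x_0,\,\ty(u)=x_0}\Big]=b_{x_0}\Eh_{x_0}\Big[\tfrac{1}{b_{x_0}}\1{\tauh_{x_0}<\infty}\Big],
\]
which equals $\Ph_{x_0}(\tauh_{x_0}<\infty)=1$ by positive recurrence of the spine chain $(\phi_k)_{k\geq 0}$ under $\Ph_{x_0}$. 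The very same computation, started at an arbitrary $y\in\X$, will give $\E_y[\Zi^{x_0}]=b_y/b_{x_0}$, an identity crucial for what follows.

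For the second moment, I would decompose $\E_{x_0}[\Zi^{x_0}(\Zi^{x_0}-1)]$ as a sum over ordered pairs $(u,v)$ of distinct elements of $\cal{L}^{x_0}$, parametrised by their most recent common ancestor $w\in\Tb$. The set of possible MRCAs is
\[
\cal{B}_*:=\{\root\}\cup\big\{w\in\Tb\st|w|\geq1,\;\ty(w_j)\neq x_0\text{ for every }1\leq j\leq|w|\big\}.
\]
For each $w\in\cal{B}_*$ and each ordered pair of distinct children $c,c'$ of $w$, the contribution is $Y_c\,Y_{c'}$, where $Y_c:=\1{\ty(c)=x_0}+\1{\ty(c)\neq x_0}\Zi^{x_0}_c$ counts the elements of $\cal{L}^{x_0}$ reached through $c$. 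The branching property, combined with the first-moment identity above, gives $\E[Y_c\mid\ty(c)=y]=b_y/b_{x_0}$ for every $y\in\X$, and the $Y_c$ for distinct children are conditionally independent. Taking conditional expectations should then yield, for $w$ of type $x$,
\[
\E_{x_0}\Big[\sum_{c\neq c'}Y_c\,Y_{c'}\;\Big|\;\cal{F}_{|w|},\,\ty(w)=x\Big]=\frac{1}{b_{x_0}^2}\E_x\Big[\Big(\sum_{y\in\X}b_y\nu^y\Big)^2-\sum_{y\in\X}b_y^2\nu^y\Big]=\frac{q(x)}{b_{x_0}^2},
\]
where an elementary expansion identifies $q(x):=\sum_{y,z\in\X}b_y Q^x_{y,z}b_z$ via the definition of $Q^x_{y,z}$.

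It then remains to evaluate $\E_{x_0}\big[\sum_{w\in\cal{B}_*}q(\ty(w))\big]$. Isolating the $w=\root$ contribution and applying the multitype many-to-one lemma once more should reduce this to $q(x_0)+b_{x_0}\Eh_{x_0}\big[\sum_{k=1}^{\tauh_{x_0}-1}q(\phi_k)/b_{\phi_k}\big]$. The classical Markov chain excursion identity $\Eh_{x_0}\big[\sum_{k=0}^{\tauh_{x_0}-1}f(\phi_k)\big]=\sum_{z\in\X}f(z)\pi_z/\pi_{x_0}$, applied with $f(z)=q(z)/b_z$ and $\pi_z=a_z b_z$, will then simplify the whole expression to $a_{x_0}^{-1}\sum_{z\in\X}a_z q(z)=\eta^2/a_{x_0}$, the last equality being the definition of $\eta$ in $\bf(H_Q)$. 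Combining everything should give $\E_{x_0}[\Zi^{x_0}(\Zi^{x_0}-1)]=\eta^2/(a_{x_0}b_{x_0}^2)$, from which the variance formula in~(\ref{valeurs_Z}) follows at once.

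The main obstacle I expect lies in the algebraic identification above of the conditional expectation at $w$ as $q(\ty(w))/b_{x_0}^2$: one has to separate the diagonal from the off-diagonal terms in $Q^x_{y,z}=\E_x[\nu^y\nu^z]-\delta_{y,z}m_{x,z}$, so that the subtracted term $-\sum_y b_y^2\nu^y$ in the display exactly cancels the $m_{x,y}$ contributions coming from $\E_x[(\nu^y)^2]=Q^x_{y,y}+m_{x,y}$, leaving the clean quantity $\sum_{y,z}b_y Q^x_{y,z}b_z$. Hypothesis $\bf(H_Q)$ is precisely what is needed at this step, both to guarantee $q(x)<\infty$ for every $x$ and to ensure convergence of the final series $\sum_z a_z q(z)=\eta^2$.
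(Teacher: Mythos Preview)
Your argument is correct and complete; the first-moment computation matches the paper exactly, and your second-moment computation lands on the same final identity $\E_{x_0}[(\Zi^{x_0})^2]=1+\eta^2/(a_{x_0}b_{x_0}^2)$.

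The route you take for the second moment is genuinely different from the paper's. You compute $\E_{x_0}[\Zi^{x_0}(\Zi^{x_0}-1)]$ by a direct \emph{MRCA decomposition} under $\P_{x_0}$: split ordered pairs $(u,v)$ in $\cal{L}^{x_0}$ according to their most recent common ancestor $w$, use conditional independence of the subtrees through distinct children of $w$ together with the first-moment identity $\E_y[\Zi^{x_0}]=b_y/b_{x_0}$ to identify the contribution at $w$ as $q(\ty(w))/b_{x_0}^2$, and finally apply the many-to-one lemma to the sum over $w\in\cal{B}_*$. The paper instead size-biases one factor of $\Zi^{x_0}$ to obtain the intermediate identity $\E_{x_0}[(\Zi^{x_0})^2]=\Eh_{x_0}[\Zi^{x_0}]$, and then decomposes $\Zi^{x_0}$ under $\Ph_{x_0}$ along the spine $(w_k)_{k\leq\tauh_{x_0}}$, computing the contribution of each spine-brother via the Radon--Nikodym derivative $\d\Ph/\d\P$. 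The two calculations are dual: your MRCA $w$ plays the role of the paper's spine vertex $w_k$, and your ``other child'' $c'$ corresponds to a brother in $\Omega(w_{k+1})$. Your approach stays almost entirely under $\P_{x_0}$ and makes the appearance of $q(x)=\sum_{y,z}b_yQ^x_{y,z}b_z$ completely transparent; the paper's approach, on the other hand, yields the clean structural identity $\E_x[(\Zi^y)^2]=(b_x/b_y)\Eh_x[\Zi^y]$ as a byproduct. Both converge on the same Markov-chain excursion identity for $(\phi_k)$ at the end.
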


\begin{proof} First, let us focus on the first moment of the cardinal of an $\L^y$ stemming from a root of type $x\in\X$~; using the many-to-one lemma we get
\begin{align}\label{Ezij}
\E_x[\Zi^y]&= \E_x\Big[\sum_{u\in\Tb\backslash\{\root\}} \1{\ty(u_1)\neq y, \ldots, \ty(u_{|u|-1})\neq y, \ty(u)=y}\Big] \nonumber \\
&= \sum_{k≥1} \E_x\Big[\sum_{|u|=k} \1{\ty(u_1)\neq y, \ldots, \ty(u_{k-1})\neq y, \ty(u)=y}\Big] \nonumber \\
&= \sum_{k≥1} \Eh_x\Big[ b_x \frac{\1{w_k\in \cal{L}^y}}{b_{\phi_k}} \Big]= \frac{b_x}{b_y}, 
\end{align}
where between the last two lines we used the fact that $(\phi_k)_{k\in\N}$ is positive recurrent. So in the case where $x=y=x_0$ this yields the first equality of~(\ref{valeurs_Z}). Now, let us compute the second moment of the number of vertices forming the first generation of type $y$. Discussing on the generation to which vertices of $\mathcal{L}^y$ belong, we get 
\begin{equation*}
\E_x[(\Zi^y)^2]=\E_x\Big[\Big(\sum_{k≥1}\sum_{|u|=k} \1{u\in{\L^y}}\Big)\times\Zi^y\Big]. 
\end{equation*}
For $k≥1$, let us focus on the general term of the sum. When conditioning on $\cal{F}_k$, it can be written as
\begin{equation*}
\E_x\Big[\Big(\sum_{|u|=k}\1{u\in{\L^y}}\Big)\Zi^y\Big]=\E_x\Big[\Big(\sum_{|u|=k}\1{u\in{\L^y}}\Big)\E_x\Big[\Zi^y \mid \cal{F}_k \Big]\Big]. 
\end{equation*}
Let us apply the many-to-one lemma (Lemma~\ref{lemma:many-to-one-multitype}) at generation $k$ to this expectation, with the setting $X_k=\E_x\Big[\Zi^y \mid \cal{F}_k \Big]$ (which is $\cal{F}_k$-measurable )~; we get
\begin{equation*}
\E_x\Big[\Big(\sum_{|u|=k}\1{u\in{\L^y}}\Big)\Zi^y\Big]=\Eh_x\Big[ b_x\times\frac{1}{b_y}\1{\tauh_y=k} \E_x\Big[\Zi^y \mid \cal{F}_k \Big] \Big]=\frac{b_x}{b_y}\Eh_x\Big[ \1{\tauh_y=k}\Zi^y\Big]. 
\end{equation*}
where we recall that $\tauh_y$ is the first non-null hitting time of $y$ by $(\phi_k)_{k≥0}$. We used the fact that on the event $\{\tauh_y=k\}$, we have $\E_x\left[\Zi^y \mid \cal{F}_k \right]=\Eh_x\left[\Zi^y \mid \cal{F}_k \right]$. Summing over $k≥1$, as $(\phi_k)_{k≥0}$ is recurrent, we finally get a simpler expression of the second moment~: 
\begin{equation*}
\E_x[(\Zi^y)^2]=\frac{b_x}{b_y}\Eh_x[\Zi^y]. 
\end{equation*}

Now, computing this last quantity will require a decomposition more subtle. Under the biased law $\Ph$, $\cal{L}^y$ is made up of 
\begin{itemize}
\item[$\bullet$] the first vertex of the spine $(w_k)_{k≥1}$ being of type $y$, that is $w_{\tauh_y}$, counting for one vertex, 
\item[$\bullet$] the vertices $u$ of type $y$ which are brothers of a $w_k$ for $k≤\tauh_y$, counting for $\sum_{k=1}^{\tauh_y}\sum_{u\in\Omega(w_k)}\1{ \ty(u)=y }$ vertices, 
\item[$\bullet$] the lines $\cal{L}_u^y$ for any brother $u$ of any $w_k$ (with $k≤\tauh_y$) such that $\ty(u)\neq y$, counting for $\sum_{k=1}^{\tauh_y}\sum_{u\in\Omega(w_k)} \1{ \ty(u)\neq y }\Zi_u^y$ vertices, 
\end{itemize}  
where we recall that for $k≥0$, $\Omega(w_k)$ stands for the brothers of $w_k$ ($w_k$ not included). In total, we can write that
\begin{equation}\label{eq:developEhZ}
\Eh_x[\Zi^y]=\Big( 1+\Eh_x\Big[ \sum_{k=1}^{\tauh_y}\sum_{u\in\Omega(w_k)}(\Zi_u^y \1{ \ty(u)\neq y } +1\times\1{ \ty(u)=y }) \Big] \Big) 
\end{equation}
after this decomposition along the spine. Conditioning with respect to $\sigma((w_k)_{k\in\N},(\Omega(w_k))_{k\in\N})$ and using the fact that $\E_x[\Zi^y]=\frac{b_x}{b_y}$, this last expectation is equal to 
\begin{align}\label{eq:EiZj2debut}
\Eh_x\Big[ \sum_{k=1}^{\tauh_y}\sum_{u\in\Omega(w_k)}(\Zi_u^y \1{ \ty(u)\neq y } +1\times\1{ \ty(u)=y }) \Big]&=\Eh_x\Big[ \sum_{k=1}^{\tauh_y}\sum_{u\in\Omega(w_k)} \frac{b_{\ty(u)}}{b_y} \Big] \nonumber\\
&=\frac{1}{b_y}\Eh_x\Big[ \sum_{k=0}^{\tauh_y-1} \Big( (\sum_{\vec{u}=w_k} b_{\ty(u)}) - b_{\phi_{k+1}} \Big) \Big]\nonumber\\
&=\frac{1}{b_y}\Eh_x\Big[ \sum_{k=0}^{\tauh_y-1} \Eh_{\phi_k}\Big[ \Big(\sum_{|u|=1} b_{\ty(u)}\Big) - b_{\phi_{1}} \Big] \Big], 
\end{align}
where we used the branching property on each $w_k$ for $0≤k≤\tauh_y-1$. Discussing on the type of $w_k$ in the inner expectation, this can be written as
\begin{equation*}
\Eh_x\Big[ \sum_{k=1}^{\tauh_y}\sum_{u\in\Omega(w_k)}(\Zi_u^y \1{ \ty(u)\neq y } +1\times\1{ \ty(u)=y })\Big] =\frac{1}{b_y}\sum_{z\in\X} \Eh_x\Big[ \sum_{k=0}^{\tauh_y-1} \1{\phi_k=z} \Big] \Eh_z\Big[ \Big(\sum_{|u|=1} b_{\ty(u)}\Big) - b_{\phi_1} \Big]. 
\end{equation*}
Let us clarify the term $\Eh_z\Big[ \Big(\sum_{|u|=1} b_{\ty(u)}\Big) - b_{\phi_1} \Big]$ for any $z\in\X$. Noticing that \mbox{$\Eh_z\Big[\1{u=w_1}\mid \cal{F}_1\Big]$}$=\frac{b_{\ty(u)}}{\sum_{|u|=1} b_{\ty(u)}}$ as explained in the construction of $\Tbh$, and that ${\frac{\d \Ph_z}{\d\P_z}}|_{\cal{F}_1}=\frac{\sum_{|u|=1}b_{\ty(u)}}{b_z}$, we get
\begin{align*}
\Eh_z\Big[ \Big( \sum_{|u|=1} b_{\ty(u)} \Big) - b_{\phi_1} \Big]=& \Eh_z\Big[\sum_{|u|=1}\Big( b_{\ty(u)} - b_{\ty(u)}\1{ u=w_1 } \Big) \Big]\\
=& \Eh_z\Big[\sum_{|u|=1}\Big( b_{\ty(u)} - b_{\ty(u)}\times \frac{b_{\ty(u)}}{\sum_{|u|=1}b_{\ty(u)}} \Big) \Big]\\
=&\E_z\Big[\frac{\sum_{|u|=1}b_{\ty(u)}}{b_z} \Big( \sum_{|u|=1}\Big( b_{\ty(u)} - b_{\ty(u)}\times \frac{b_{\ty(u)}}{\sum_{|u|=1}b_{\ty(u)}} \Big)\Big) \Big]\\
=&\frac{1}{b_z}\E_z\Big[ \Big( \sum_{|u|=1} b_{\ty(u)}\Big)^2 - \sum_{|u|=1}(b_{\ty(u)})^2 \Big].  
\end{align*}
Discussing on the type of $u$ in this last expectation, we get
\begin{align*}
\Eh_z\Big[ \Big( \sum_{|u|=1} b_{\ty(u)}\Big) - b_{\phi_1} \Big]=&\frac{1}{b_z} \sum_{{x'},{y'}\in\X} b_{x'} b_{y'} \E_z\Big[ \Big( \sum_{|u|=1} \1{ \ty(u)={x'} } \Big)\Big( \sum_{|u|=1} \1{ \ty(u)={y'} } \Big) - \delta_{x',y'}\sum_{|u|=1}\1{ \ty(u)=x' } \Big]\\
=&\frac{1}{b_z} \sum_{{x'},{y'}\in\X} b_{x'} Q_{x',y'}^z b_{y'}, 
\end{align*}
so plugging this in~(\ref{eq:EiZj2debut}), and then plugging~(\ref{eq:EiZj2debut}) in~(\ref{eq:developEhZ}) yields
\begin{equation*}\label{EiZj2}
{\E}_x[(\Zi^y)^2]=\frac{b_x}{b_y}\Big( 1+\sum_{z\in\X} \Eh_x\Big[ \sum_{k=0}^{\tauh_y-1} \1{ \phi_k=z } \Big] \frac{1}{b_z b_y} \sum_{x',y'\in\X}b_{x'} Q_{x',y'}^z b_{y'} \Big).
\end{equation*}
Now if $x=y=x_0$, we finally get 
\begin{align*}\label{Zi2_rec}
{\E}_{x_0}[{(\Zi^{x_0}})^2]&=1+\sum_{z\in\X} \frac{\pi_z}{\pi_{x_0}} \frac{1}{b_z b_{x_0}} \sum_{{x'},{y'}\in\X}b_{x'} Q_{{x'},{y'}}^z b_y' \nonumber \\
&=1+\frac{1}{a_{x_0} {b_{x_0}}^2}\sum_{z\in\X} \sum_{x',y'\in\X} a_z b_{x'} Q_{x',y'}^z b_{y'} \nonumber = 1+ \frac{\eta^2}{a_{x_0} {b_{x_0}}^2}. 
\end{align*}

\noindent Thus, the variance of our leafed Galton--Watson tree with edge lengths $\T$ is finite under $\bf (H_Q)$ and computed as~: 
\begin{equation*}
{\bf Var}_{x_0}(\Zi^{x_0})={\E}_{x_0}[({\Zi^{x_0}})^2]-{\E}_{x_0}[{\Zi^{x_0}}]^2=\frac{\eta^2}{a_{x_0} {b_{x_0}}^2}, 
\end{equation*}
which concludes~(\ref{valeurs_Z}), and the proof of Propositions~\ref{prop:EZ} and~\ref{prop:hypotheses}. 
\end{proof}

\subsection{ Conclusion of the proof of Theorem~\ref{th:multitype}} 
Now, we can conclude the proof of Theorem~\ref{th:multitype}. Indeed, since $\F$ satisfies hypothesis $\bf (H)$, one can apply Theorem~\ref{th:bitype} to $(H^{\ell}(n))_{n\in\N}$ : under $\P_{x_0}$,
\begin{equation}\label{eq:cvfinalenot}
\Big(\frac{H^{\ell}(\fl{ns})}{\sqrt{n}}\Big)_{s≥ 0} \substack { \Longrightarrow \\ n\to\infty} \Big(\frac{2\mu}{\sigma}|B_{{m}^{-1}s}|\Big)_{s≥ 0},
\end{equation}
where the convergence holds in law for the Skorokhod topology on the space $\mathbb{D}(\R_+,\R)$ of c\`adl\`ag functions, and where $B$ is a standard Brownian motion. Here, 
\begin{itemize}
\item $\begin{aligned}[t]
\mu=\E\Big[ \sum_{u\in\T,\,|u|=1,\, \ty(u)=1} \ell(u) \Big]&=\E_{x_0}\Big[ \sum_{u\in\Tb,\,u\in\cal{L}^{x_0}} |u| \Big] &\\
&= \sum_{k≥1} \E_{x_0}\Big[ \sum_{u\in\Tb,\, |u|=k} |u| \1{u\in\cal{L}^{x_0}} \Big]  &\\
&=\sum_{k≥1}b_{x_0}\Eh_{x_0}\Big[\frac{1}{b_{\phi_k}} |w_k|\1{k=\tauh_{x_0}} \Big] &\\
&=\Eh_{x_0}[\tauh_{x_0}]=\frac{1}{a_{x_0}b_{x_0}}, &
\end{aligned}$ \\
since $(\pi_x)_{x\in\X}=(a_x b_x)_{x\in\X}$ is the invariant measure of $\phi$. We used Lemma~\ref{lemma:many-to-one-multitype} between lines 2 and 3.

\item $\begin{aligned}
&\sigma^2={\bf Var}\Big[ \sum_{u\in\T,|u|=1,\ty(u)=1} 1 \Big]={\bf Var}_{x_0}\Big[ \Zi^{x_0} \Big]=\frac{\eta^2}{a_{x_0}{b_{x_0}}^2},&
\end{aligned}$ \\
by Proposition~\ref{prop:EZ}. 
\item $\begin{aligned}[t]
m=\E\Big[  \sum_{u\in\T|u|=1} 1 \Big]=\E_{x_0}\Big[ \Ni^{x_0} \Big]=\frac{1}{a_{x_0}}, 
\end{aligned}$ \\
by Proposition~\ref{prop:EN}. 
\end{itemize}
Plugging this into~(\ref{eq:cvfinalenot}), and using the fact that for all $n\in\N$, $H^{\ell}(n)=|u_{\Fb}(n)|$ (as specified in Proposition~\ref{prop:egaliteprocess}), we finally get
\begin{equation*}
\Big(\frac{|u_{\Fb}(\fl{ns})|}{\sqrt{n}}\Big)_{s≥ 0} \substack { \Longrightarrow \\ n\to\infty} \Big(\frac{2}{\eta}|B_s|\Big)_{s≥ 0},
\end{equation*}
which is what we wanted to prove Theorem~\ref{th:multitype}~(i). The proof of Theorem~\ref{th:multitype}~(ii) and~(iii) is now similar to that of Theorem~\ref{th:bitype}~(ii) and~(iii). $\hfill \square$ 

\section{An application of Theorem~\ref{th:multitype} to random laminations}\label{s:application}

In~\cite{curien-peres}, N.~Curien and Y.~Peres study certain aspects of the random laminations of the disk, and this study is reduced to that of a multitype Galton--Watson tree $\Tb$ with types taking values in $\llbracket 4;+\infty \llbracket$. Vertices $u$ of type $m≥4$ give progeny the following way~: choose $m'\in\brint{0;m}$ uniformly at random, and if $m'≥3$ then $u$ has a child of type $1+m'$, if $m'≤m-3$ then $u$ has a child of type $1+m-m'$ (note that if these two conditions are satisfied $u$ gives birth to two children). We propose an alternative proof of Theorem 1.1 of~\cite{curien-peres}, simply applying Theorem~\ref{th:multitype} (iii). 
\begin{theorem}
Under $\P_4$, population at generation $n$ denoted by $Z_n$ is such that
\begin{equation*}
\E_{4}\Big[ Z_n \Big]\sto{n\to\infty} \frac{4}{e^2-1}. 
\end{equation*}
Moreover, the probability that $Z_n\neq 0$ is such that 
\begin{equation*}
\P_4\Big( Z_n\neq 0 \Big)\sto[\sim]{n\to\infty} \frac{5(e^2-1)^2}{8n}. 
\end{equation*}
\end{theorem}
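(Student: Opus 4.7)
The plan is to identify the process as a critical multitype Galton--Watson tree on $\X = \llbracket 4;+\infty\llbracket$, compute its principal eigenvectors under the normalisation of $\bf (H_M)$, and then apply Theorem~\ref{th:multitype}(iii) together with the many-to-one lemma. Conditioning on the uniform variable $m' \in \llbracket 0;m\rrbracket$ and exploiting the symmetry $m' \leftrightarrow m - m'$, one first obtains the mean matrix
\begin{equation*}
m_{p,q} = \frac{2}{p+1} \1{4\leq q\leq p+1}.
\end{equation*}
The right-eigenvalue equation $\sum_{q=4}^{p+1} b_q = \tfrac{p+1}{2} b_p$ admits, by subtracting the analogous equation for $p-1$, a short linear recurrence whose solution is $b_p = c(p-2)/2$; the left-eigenvalue equation is solved by the ansatz $a_p = D(p-3)\, 2^p/(p-1)!$, verified by substitution using the elementary identity $\sum_{l\geq 0}(l-1)(l-4)2^l/l! = 0$.

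The constants $c$ and $D$ are then fixed by the normalisations $\sum_p a_p = 1$ and $\sum_p a_p b_p = 1$. Both sums are of the form $\sum_{j\geq j_0} p(j)\, 2^j/j!$ for a polynomial $p$, and evaluate in closed form via the identities $\sum_{j\geq 0} j^k x^j/j! = B_k(x) e^x$: one finds $\sum_p (p-3) 2^p/(p-1)! = 8$, whence $D = 1/8$, and $\sum_p (p-3)(p-2) 2^{p-1}/(p-1)! = 2(e^2-1)$, whence $c = 4/(e^2-1)$. In particular $b_4 = 4/(e^2-1)$, and hypothesis $\bf (H_M)$ is verified.

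For the first assertion, the many-to-one lemma (Lemma~\ref{lemma:many-to-one-multitype}) gives $\E_4[Z_n] = b_4\,\Eh_4[1/b_{\phi_n}]$. The spine chain $(\phi_n)$ is irreducible and aperiodic (each state admits a self-loop via $m' = m-1$, which produces a single child of the same type) and, under $\bf (H_M)$, positive recurrent with stationary law $\pi_p = a_p b_p$, so $\Ph_4(\phi_n \in \cdot\,) \to \pi$ in total variation. Since $1/b_p \leq 1/b_4$ is bounded, dominated convergence yields $\Eh_4[1/b_{\phi_n}] \to \sum_p \pi_p/b_p = \sum_p a_p = 1$, hence $\E_4[Z_n] \to b_4 = 4/(e^2-1)$.

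For the second assertion, note that $\{Z_n \neq 0\} = \{h_{max}(\Tb) \geq n\}$, so Theorem~\ref{th:multitype}(iii) gives the $1/n$-tail with constant $2b_4/\eta^2$ once $\bf (H_Q)$ and $\bf (H_R^4)$ are established. Using $\sum_{y,z} b_y Q^x_{y,z} b_z = 2\, \E_x\big[\sum_{u<v,|u|=|v|=1} b_{\ty(u)} b_{\ty(v)}\big]$ and noting that a type-$p$ vertex has two children exactly when $m' \in [3,\,p-3]$ (so $p \geq 6$), one obtains
\begin{equation*}
\eta^2 = 2 \sum_{p\geq 6} \frac{a_p}{p+1} \sum_{m'=3}^{p-3} b_{1+m'} b_{1+p-m'}.
\end{equation*}
After substituting the explicit formulas for $a$ and $b$ and reindexing $j = p+1$, this collapses to a series of the form $\sum_{j\geq 7} P(j) 2^j/j!$ for an explicit polynomial $P$ of degree $5$; evaluating it using the same $B_k(x) e^x$ identities produces the announced value $2 b_4/\eta^2 = 5(e^2-1)^2/8$. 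The main obstacle is the verification of $\bf (H_R^4)$: one needs the tail estimate $\Ph_4(\tauh_4 > y) = o(y^{-2})$ on the first return time of the spine chain to state $4$, which, $(\phi_n)$ being an explicit birth-and-death-like chain on $\N$, would most naturally be obtained by a Lyapunov-function hitting-time argument (or by applying the simpler sufficient condition promised in the appendix).
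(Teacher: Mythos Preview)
Your approach is essentially the same as the paper's: compute the mean matrix, identify the eigenvectors $b_p \propto (p-2)$ and $a_p \propto (p-3)2^p/(p-1)!$ with the normalisations of $\bf (H_M)$, use the many-to-one lemma and ergodicity of the spine chain for $\E_4[Z_n]$, and apply Theorem~\ref{th:multitype}(iii) for the survival probability. Two points of comparison are worth noting. First, for $\eta^2$ the paper takes a shorter route than your sibling-product expansion: since a type-$k$ vertex has at most two children, and exactly two (of types $1+m'$ and $1+k-m'$) precisely when $3\le m'\le k-3$, one reads off directly $Q^k_{i,j}=\tfrac{2}{k+1}\1{i+j=k+2}$, and the resulting single sum $\eta^2=\sum_k a_k\tfrac{2}{k+1}\sum_{i+j=k+2} b_i b_j$ evaluates without the degree-five polynomial manipulation you describe. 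Second, the regularity hypothesis, which you correctly flag as the remaining obstacle, is dispatched in the paper via the appendix criterion $\bf (H_R^{alt})$ with the simple Lyapunov function $V(p)=\beta^p$ for any $\beta>1$: since $p_{p,q}=\tfrac{2(q-2)}{(p-2)(p+1)}\1{q\le p+1}$, one has $\sum_q p_{p,q}\beta^q \le \tfrac{2}{p-2}\sum_{q\le p+1}\beta^q = O(\beta^{p+1}/p)$, so the drift condition holds outside a finite set, and $V$ clearly dominates $1/b_p\asymp 1/p$.
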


\begin{proof}
In this proof, we will use the notation of the previous sections. A computation leads to a mean matrix ${\bf M}=(m_{i,j})_{i,j≥4}$ where for $i,j≥4$, $m_{i,j}=\frac{2}{i+1}\1{j≤i+1}$. That is $\bf M$ is such that~: 
\begin{equation*}
{\bf M}= \begin{pmatrix}
  \frac{2}{5} & \frac{2}{5} & 0& 0 & 0 & \cdots &  \\
  \frac{2}{6} & \frac{2}{6} & \frac{2}{6} & 0& 0 & \cdots &  \\
  \frac{2}{7} & \frac{2}{7} & \frac{2}{7} & \frac{2}{7}& 0 & \cdots &  \\
  
  \vdots  & \vdots  & \vdots &\vdots& \vdots & \ddots & 
 \end{pmatrix}. 
\end{equation*}
Following conditions of Subsection~\ref{s:intromulti}, we are looking for a left eigenvector $(a_i)_{i≥4}$ and a right eigenvector $(b_i)_{i≥4}$ such that for $i≥4$~: 
\begin{equation*}
(1-\frac{1}{i+2})b_{i}-b_{i+1}+\frac{2}{i+2}b_{i+2}=0\;\textrm{ and }\; \frac{2}{i+1}a_i-a_{i+1}+ a_{i+2}=0. 
\end{equation*}
and with initial conditions $a_4=a_5$ and $b_5=\frac{3}{2}b_4$. A computation indicates that these equations are satisfied by 
\begin{equation*}
(b_i)_{i≥4}=(\frac{2}{e^2-1}(i-2))_{i≥4}\textrm{ and } (a_i)_{i≥4}=(\frac{2^{i-3}(i-3)}{(i-1)!})_{i≥4}, 
\end{equation*}
vectors which satisfy $\sum_{i≥4} a_i=1$ and $\sum_{i≥4} a_ib_i=1$. Thus, the multitype Galton--Watson tree $\Tb$ here satisfies hypothesis $\bf (H_M)$. Moreover, a computation gives for $i,j,k≥4$, 
\begin{equation*}
Q_{i,j}^k=\frac{2}{k+1}\1{i=k+2-j},
\end{equation*}
which yields 
\begin{equation*}
\eta^2=\frac{16}{5(e^2-1)^2}<\infty,
\end{equation*}
and so $\bf (H_Q)$ is also satisfied. We now want our tree to satisfy $\bf (H_R^{alt})$ (introduced in the appendix); the transition probabilities of the resulting Markov chain $(\phi_{n})_{n≥4}$ are given by 
\begin{equation*}
p_{i,j}=\frac{2(j-2)}{(i-2)(i+1)}\1{4≤j≤i+1}
\end{equation*}
for $i,j≥4$. Let us set for all $n≥4$, $V(n)=\beta^n$ for a any $\beta>1$. We notice that $(\phi_{n})_{n≥4}$ satisfies condition~(\ref{eq:lyapunov}) with $(V(n))_{n≥4}$ dominating $(\frac{1}{b_n})_{n≥4}$ for $n$ large enough. Thus the tree $\Tb$ satisfies hypothesis $\bf (H_R^{alt})$. 
Anyway, we get, applying Lemma~\ref{lemma:many-to-one-multitype}, 
\begin{equation*}
\E_{4}\Big[ Z_n \Big]=\E_{4}\Big[ \sum_{|u|=n}1 \Big]=b_4\Eh\Big[ \frac{1}{b_{\phi_n}} \Big].
\end{equation*}
The Markov chain $(\phi_n)_{n≥4}$ being irreducible, aperiodic and having $(\pi_i)_{i≥4}=(\frac{2}{e^2-1}\frac{2^{i-3}(i-3)(i-2)}{(i-1)!})_{i≥4}$  for invariant measure, we get 
\begin{equation*}
\E_{4}\Big[ Z_n \Big]=b_4\Eh\Big[\frac{1}{b_{\phi_n}} \Big]\sto{n\to\infty} b_4\sum_{i≥4}\frac{1}{b_i}\pi_i=\frac{4}{e^2-1}. 
\end{equation*}
Now, $\Tb$ satisfies the conditions of Theorem~\ref{th:multitype}, and then (iii) yields
\begin{equation*}
\P(Z_n\neq0)\sto[\sim]{n\to\infty} \frac{2}{\eta^2 n}=\frac{5(e^2-1)^2}{8}\times\frac{1}{n}, 
\end{equation*}
which completes the proof. 
\end{proof}
\appendix
\section{Appendix}

Conditions $\bf (H_R^{x_0})$ may be not convenient to check. In this appendix, we propose a more practical hypothesis. We recall the statement of hypothesis $\bf(H_R^{x_0})$ for ${x_0}\in\X$~: 
\begin{equation*}
{\bf (H_R^{x_0})}\begin{cases}\parbox{0.9\textwidth}{\begin{itemize} 
\item[$\bullet$] $y^2\P_{x_0}\Big( \max\{ |u| \st u\in\Tb,\: \ty(u_1),\ldots,\ty(u_{|u|})\neq {x_0} \}>y \Big)\sto{y\to\infty} 0$, 
\item[$\bullet$] $y^2\E_{x_0}\Big( \sum_{|u|>y}\1{ \ty(u_1),\ldots,\ty(u_{|u|-1})\neq x_0,\:\ty(u)={x_0}} \Big)\sto{y\to\infty} 0$. 
\end{itemize}
}\end{cases}
\end{equation*}
We propose an alternative hypothesis to $\bf(H_R^{x_0})$ : 
\begin{equation*}
{\bf (H_R^{alt})}\begin{cases}\parbox{0.9\textwidth}{ There exists a function $V:\X\to[1;\infty)$, a finite set $C\subset\X$ and a constant $\beta>0$, such that for all $x\in\X\setminus C$,
\begin{equation}\label{eq:lyapunov}
\sum_{y\in \X} p_{x,y} V(y) ≤ (1-\beta) V(x)
\end{equation}
the function $V$ being such that for all $x\in\X\setminus C$, $\frac{1}{b_x}≤V(x)$.
}\end{cases}
\end{equation*}

We recall that $(b_x)_{x\in\X}$ is the left eigenvector introduced in $\bf (H_M)$, and that $(p_{x,y})_{x,y\in\X}$ are the transition probabilities of $(\phi_k)_{k\in\X}$ introduced in Proposition~\ref{prop:markovtype}. A Markov chain satisfying condition~(\ref{eq:lyapunov}) is said to be {\it geometric ergodic} \cite{meyn-tweedie}. Notice that any Markov chain on a finite space satisfies such a condition, as we just have to choose $C=\X$ and any $V≥1$. Hence, hypothesis $\bf (H_R^{alt})$ is always satisfied if $\X$ is finite, and (according to the proposition below) so is $\bf (H_R^{x_0})$ for any $x_0\in\X$. The notion of geometric ergodicity is well discussed in Chapter 15 of~\cite{meyn-tweedie}. 

\begin{proposition}
Hypothesis $\bf (H_R^{alt})$ implies hypothesis $\bf (H_R^{x_0})$ for any $x_0\in\X$. 
\end{proposition}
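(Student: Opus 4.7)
The approach is to use the many-to-one lemma (Lemma~\ref{lemma:many-to-one-multitype}) to translate both parts of $\bf (H_R^{x_0})$ into tail estimates for the return time $\tauh_{x_0}:=\inf\{k\geq 1:\phi_k=x_0\}$ of the spine Markov chain $(\phi_k)_{k\ge 0}$ under $\Ph_{x_0}$, and then to derive these estimates from the Foster--Lyapunov drift inequality~\eqref{eq:lyapunov}.

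The reduction is almost immediate. Applying Lemma~\ref{lemma:many-to-one-multitype} with $g(\phi_1,\ldots,\phi_k)=\1{\phi_1,\ldots,\phi_{k-1}\neq x_0,\,\phi_k=x_0}$ and summing over $k>y$ yields the exact identity
$$
\E_{x_0}\Big[\sum_{|u|>y}\1{\ty(u_1),\ldots,\ty(u_{|u|-1})\neq x_0,\,\ty(u)=x_0}\Big] = \Ph_{x_0}(\tauh_{x_0}>y),
$$
so the second part of $\bf (H_R^{x_0})$ is equivalent to $y^2\Ph_{x_0}(\tauh_{x_0}>y)\to 0$. For the first part, note that if a vertex $u\in\Tb$ satisfies $\ty(u_1),\ldots,\ty(u_{|u|})\neq x_0$ then so does every ancestor $u_j$, hence the maximum $T$ in question obeys $\{T>y\}=\{\exists u:\,|u|=n,\;\ty(u_j)\neq x_0\;\forall\,1\leq j\leq n\}$ with $n:=\fl{y}+1$, and Markov's inequality combined with Lemma~\ref{lemma:many-to-one-multitype} gives
$$
\P_{x_0}(T>y) \leq b_{x_0}\,\Eh_{x_0}\!\Big[\tfrac{1}{b_{\phi_n}}\1{\tauh_{x_0}>n}\Big] \leq b_{x_0}\Big(\Eh_{x_0}[V(\phi_n)\1{\tauh_{x_0}>n}]+K\,\Ph_{x_0}(\tauh_{x_0}>n)\Big),
$$
where $K:=\max_{x\in C}1/b_x<\infty$ by finiteness of $C$, using $1/b_x\le V(x)+K$ on all of $\X$. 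It therefore suffices to prove that $\Ph_{x_0}(\tauh_{x_0}>n)$ and $\Eh_{x_0}[V(\phi_n)\1{\tauh_{x_0}>n}]$ decay exponentially in $n$.

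To extract these bounds from~\eqref{eq:lyapunov}, set $\sigma:=\inf\{k\geq 1:\phi_k\in C\}$. A direct check using the Markov property and~\eqref{eq:lyapunov} shows that $M_k:=(1-\beta)^{-k}V(\phi_k)\1{\phi_0,\ldots,\phi_{k-1}\notin C}$ is a non-negative supermartingale under $\Ph_x$ for $x\notin C$; optional stopping at $n\wedge\sigma$ then yields
$$
\Eh_x[V(\phi_n)\1{\sigma>n}]\leq(1-\beta)^n V(x),\qquad \Ph_x(\sigma>n)\leq(1-\beta)^n V(x),
$$
providing exponential control of excursions away from $C$. Combining this with irreducibility of $(\phi_k)$ (from $\bf (H_M)$) and the finiteness of $C$, a standard regenerative decomposition along successive visits to $C$ shows that the number of excursions before $(\phi_k)$ reaches $x_0$ has a geometric tail and that each excursion contributes a uniformly bounded $V$-weighted factor, whence the required exponential bounds. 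The main obstacle is the $V$-weighted estimate $\Eh_{x_0}[V(\phi_n)\1{\tauh_{x_0}>n}]\leq A\rho^n$: unlike the pure tail bound for $\tauh_{x_0}$ (which is classical geometric ergodicity), it requires controlling the Lyapunov weight at an \emph{intermediate} time $n$ along trajectories that may enter and leave $C\setminus\{x_0\}$ many times before hitting $x_0$, which is handled by applying the supermartingale estimate excursion by excursion and summing. Once both exponential bounds are in hand, $y^2\rho^{\fl{y}+1}\to 0$ concludes both parts of $\bf (H_R^{x_0})$.
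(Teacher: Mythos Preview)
Your proposal is correct and follows the same overall strategy as the paper: reduce both conditions of $\bf (H_R^{x_0})$ via the many-to-one lemma to tail/moment statements about the return time $\tauh_{x_0}$ of the spine chain, and then control these using the Foster--Lyapunov drift~\eqref{eq:lyapunov}. Two minor differences are worth noting. First, for the first bullet of $\bf (H_R^{x_0})$ the paper bounds the second moment of the maximum by the cruder inequality $(\max)^2\le\sum k^2\1{\cdot}$ and arrives at $\Eh_{x_0}\big[\sum_{k=0}^{\tauh_{x_0}-1}k^2/b_{\phi_k}\big]$, whereas you go directly to a tail bound at the single time $n=\lfloor y\rfloor+1$; your reduction is slightly sharper but both work. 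Second, the paper invokes Theorem~15.2.6 of Meyn--Tweedie to obtain the key estimate $\Eh_{x_0}\big[\sum_{k=0}^{\tauh_{x_0}-1}V(\phi_k)r^k\big]<\infty$ as a black box, while you sketch its proof via the supermartingale $(1-\beta)^{-k}V(\phi_k)$ on excursions away from $C$ together with a regenerative decomposition at visits to $C$. Your self-contained argument is more elementary but requires a little care to handle the $V$-weight at the intermediate time $n$; note that your desired bound $\Eh_{x_0}[V(\phi_n)\1{\tauh_{x_0}>n}]\le A\rho^n$ follows immediately from the Meyn--Tweedie sum (take the single term $k=n$), so if the regenerative bookkeeping gets heavy you may simply cite that result as the paper does.
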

\begin{proof}

Suppose $\bf (H_R^{alt})$ is satisfied. Set $x_0\in\X$. Using Markov's inequality, notice that the first condition of $\bf (H_R^{x_0})$ would be satisfied if 
\begin{equation*}
\E_{x_0}\Big[ \big(\max\{ |u| \st \ty(u_1),\ldots,\ty(u_{|u|})\neq {x_0} \}\big)^2 \Big]<\infty. 
\end{equation*}
But, using Lemma~\ref{lemma:many-to-one-multitype}, we get
\begin{align}\label{eq:cond1}
\E_{x_0}\Big[ \big(\max\{ |u| \st \ty(u_1),\ldots,\ty(u_{|u|})\neq {x_0} \}\big)^2 \Big]&≤\E_{x_0}\Big[ \sum_{u\in\Tb} |u|^2 \1{\ty(u_1),\ldots,\ty(u_{|u|})\neq {x_0}} \Big]\nonumber \\
&=b_{x_0}\Eh_{x_0}\Big[ \sum_{k≥0} \frac{|w_k|^2}{b_{\phi_k}} \1{\phi_1,\ldots,\phi_k\neq {x_0}} \Big] \nonumber \\
&=b_{x_0}\Eh_{x_0}\Big[ \sum_{k=0}^{\tauh_{x_0}-1} \frac{k^2}{b_{\phi_k}}\Big],
\end{align}
so if this last quantity is finite, then the first condition of $\bf (H_R^{x_0})$ is satisfied. \\
Notice also that, using Lemma~\ref{lemma:many-to-one-multitype} again, the second condition of $\bf (H_R^{x_0})$ is equivalent to 
\begin{equation*}
y^2\Eh_{x_0}\Big[ \1{\tauh_{x_0}>y}\Big]\sto{y\to\infty} 0, 
\end{equation*}
a condition that would be satisfied if
\begin{align}\label{eq:cond2}
\Eh_{x_0}\Big[ {\tauh_{x_0}}^2 \Big]<\infty. 
\end{align}
Now, notice that hypothesis $\bf (H_R^{alt})$ is such that our Markov chain satisfies condition $(V4)$ of~\cite{meyn-tweedie} (see Subsection~15.2.2 p.376) with the setting $\beta=d-1$. Theorem 15.2.6 of~\cite{meyn-tweedie} with the setting $A=\{x_0\}$ then ensures that $\{x_0\}$ is $V$-geometrically regular (in the sense of the definition given in Subsection~15.2.1 p.\ 373 of~\cite{meyn-tweedie}). In particular, there exists $r>1$ such that
\begin{equation}\label{eq:sumregularfinite}
\Eh_{x_0}\Big[ \sum_{k=0}^{\tauh_{x_0}-1} V(\phi_k)r^k \Big]<\infty. 
\end{equation} 
Since $V$ is greater than $1$, this implies the finiteness of some exponential moments of $\tauh_{x_0}$, and therefore~(\ref{eq:cond2}) is satisfied. Moreover, since $\frac{1}{b_.}≤V(.)$ outside of $C$, 
\begin{align*}
\Eh_{x_0}\Big[ \sum_{k=0}^{\tauh_{x_0}-1} \frac{k^2}{b_{\phi_k}}\Big]&≤\Eh_{x_0}\Big[ \sum_{k=0}^{\tauh_{x_0}-1} \1{\phi_k\notin C}V(\phi_k){k^2}\Big]+(\max_{x\in C}\frac{1}{b_x})\Eh_{x_0}\Big[ \sum_{k=0}^{\tauh_{x_0}-1} \1{\phi_k\in C}{k^2}\Big] \\
&≤\Eh_{x_0}\Big[ \sum_{k=0}^{\tauh_{x_0}-1} (\max_{x\in C} \frac{1}{b_x}+V(\phi_k)){k^2}\Big]
\end{align*}
which is finite according to equation (\ref{eq:sumregularfinite}) ($\max_{x\in C}\frac{1}{b_x}$ being finite because $C$ is finite), thus ensuring the finiteness of~(\ref{eq:cond1}). Hypothesis $\bf (H_R^{x_0})$ is therefore satisfied. 
\end{proof}

\bigskip

\bigskip

\noindent {\bf Acknowledgement :} I thank the referee for his/her numerous and helpful comments. I thank also my advisor Elie Aïdékon for his help and his guidance all along the elaboration of this article.

\end{document}